\documentclass[a4paper,12pt]{amsart}
\usepackage[ngerman,english]{babel}
\usepackage[utf8]{inputenc}
\usepackage{amssymb}
\usepackage{amsmath}
\usepackage{amsfonts}
\usepackage{amstext}
\usepackage{amsthm}
\usepackage{todonotes}

\usepackage{parskip}

\usepackage{a4wide}
\usepackage{enumerate}
\usepackage{hyperref}

\newcommand{\IN}{{\mathbb{N}}}
\newcommand{\IR}{{\mathbb{R}}}
\newcommand{\IZ}{{\mathbb{Z}}}

\newcommand{\1}{\mathbbmss{1}}

\renewcommand{\phi}{\varphi}
\renewcommand{\epsilon}{\varepsilon}

\newcommand{\A}{\mathcal{A}}

\newcommand{\Tr}{\operatorname{Tr}}

\newcommand{\Qgen}{{\widetilde{Q}}}
\newcommand{\Dgen}{{\widetilde{D}}}
\newcommand{\LapUnw}{{\mathcal{L}}}
\newcommand{\LapDom}{{\widetilde{F}}}

\newcommand{\ltwo}{{\ell^2(X,m)}}
\newcommand{\linf}{{\ell^\infty(X)}}

\newcommand{\NeuDir}{{Q^{(N)}}}

\newcommand{\DirDir}{{Q^{(D)}}}

\newcommand{\QD}{{Q^{(D)}}}
\newcommand{\QN}{{Q^{(N)}}}
\newcommand{\QNe}{{Q^{(N)}_{\rm e}}}
\newcommand{\QDe}{{Q^{(D)}_{\rm e}}}
\newcommand{\Qe}{{Q_{\rm e}}}

\newcommand{\BFEspace}{\widetilde{D}\cap\ell^\infty(X)}

\newtheorem{theorem}{Theorem}[section]
\newtheorem{lemma}[theorem]{Lemma}
\newtheorem{proposition}[theorem]{Proposition}
\newtheorem{corollary}[theorem]{Corollary}

\theoremstyle{definition}
\newtheorem{definition}[theorem]{Definition}

\newtheorem{remarks}[theorem]{Remark}

\newcommand{\ow}[1]{\widetilde{ #1}}
\newcommand{\as}[1]{\left\langle #1\right\rangle}
\newcommand{\Dzero}{D_0}

\newcommand{\Hm}[1]{\leavevmode{\marginpar{\tiny%
$\hbox to 0mm{\hspace*{-0.5mm}$\leftarrow$\hss}%
\vcenter{\vrule depth 0.1mm height 0.1mm width \the\marginparwidth}%
\hbox to 0mm{\hss$\rightarrow$\hspace*{-0.5mm}}$\\\relax\raggedright
#1}}}





\begin{document}

\title{Boundary representation of Dirichlet forms on discrete spaces}

\author[Keller]{Matthias Keller}
\address{M. Keller, M. Schwarz, Institut für Mathematik \\Universit{\"a}t Potsdam \\14476 Potsdam, Germany } \email{matthias.keller@uni-potsdam.de, mschwarz@math.uni-potsdam.de}

\author[Lenz]{Daniel Lenz}
\address{D. Lenz, M. Schmidt, Mathematisches Institut \\Friedrich Schiller Universit{\"a}t Jena \\07743 Jena, Germany }\email{daniel.lenz@uni-jena.de, schmidt.marcel@uni-jena.de}

\author[Schmidt]{Marcel Schmidt}

\author[Schwarz]{Michael Schwarz}

\begin{abstract}
We describe the set of all  Dirichlet forms associated to a given
infinite graph in terms of Dirichlet forms on its Royden boundary.
Our approach is purely analytical and uses form methods.
\end{abstract}

\maketitle


\section*{Introduction}
Our aim in this paper is to characterize all Dirichlet forms that
are  associated to a given infinite weighted graph.  This problem is
motivated by the corresponding question  for bounded domains in
Euclidean space. For such domains  it is a most classical topic to
extend a symmetric elliptic differential  operator  to a
self-adjoint or even Markovian operator by posing suitable boundary
conditions on the geometric boundary. Indeed, such extensions play
an important role in all sorts of considerations not least as every
self-adjoint extension corresponds to a possible description of the
system in quantum mechanics and the smaller class of Markovian
self-adjoint extensions corresponds to descriptions of diffusion.
Accordingly, Markovian extension on bounded domains in Euclidean space
have received a lot of attention over the years:  In the
one-dimensional case Feller \cite{feller} characterized in 1957 all
self-adjoint Markovian extensions. Later in 1959 Wentzell
\cite{wentzell} studied Markovian Feller extensions in arbitrary
dimensions, see also \cite{ueno,taira,bony} and references therein.
Under suitable regularity assumptions on the boundary Fukushima
\cite{Fuk}  could  show in 1969
that in arbitrary dimensions all Markovian extensions lie between
the Dirichlet- and the Neumann extension. For special such Markovian
extensions (whose semigroups lie between the Dirichlet- and the
Neumann semigroup) an explicit description in terms of boundary
conditions was then given by Arendt and Warma \cite{ArendtWarma} in
2003. An explicit description of all Markovian extensions in terms
of boundary conditions or rather Dirichlet forms (in the wide sense)
on the boundary
 could only recently be given by  Posilicano \cite{posi}.
 His approach relies on connecting self-adjoint extensions and
boundary conditions via Krein's resolvent formula; a topic with
revived interest in recent years
\cite{Pos,Ryz,BMN,PR,Gr,BGW,Mal,GM}.

In the present paper we are concerned with the analogous questions
for graphs. The study of self-adjoint and Markovian extensions on
infinite weighted graphs has seen an enormous interest in recent
years. On the technical level most of the investigations are phrased
in terms of quadratic forms rather than operators, with Markovian
extensions yielding  Dirichlet form. As for essential
selfadjointness we mention in particular the works
\cite{WojDiss,JP,stoch,HKMW,CdVTHT1,CdVTHT2,TH,Mil}. Of course, in
the essentially selfadjoint case the description of the Markovian
extensions becomes trivial. In the case of general graphs (not
satisfying essential selfadjointness) the investigation of the set
of Markovian extensions becomes meaningful. In \cite{HKLW} a
characterization of the uniqueness of Markovian extensions is  given
and an analogue to the above mentioned result of Fukushima was
shown. More specifically, it was shown that for locally finite
graphs all Markovian extensions of the Laplacian lie between the
Dirchlet and the Neumann form. The condition of local finiteness is
not necessary as can be seen from the considerations of \cite{Sch17}
(which deal with a  much more general framework containing all
Dirichlet forms). However, a description of Markovian extensions in
terms of boundary conditions for graphs is still missing.  The
evident reason for this is certainly that there is no obvious
canonical geometric boundary of an infinite graph.

Now, various works in recent years suggest  that the Royden boundary
of a transient graph can be considered as an analogue to the
geometric boundary of a bounded set in Euclidean space \cite{kasue,
canon, uniform}. Indeed, to develop the theory of transient graphs
according to this point of view can be seen as the main driving
force behind  the considerations in \cite{canon,uniform}. The
present paper is a continuation of this theme building up on the
results  in \cite{canon,uniform} and presenting further confirmation
for this point of view. More specifically, our main result provides
an analogue to the result of Posilicano \cite{posi} mentioned above
and provides a one-to-one correspondence of the Dirichlet forms on a
graph and the Dirichlet forms on the Royden boundary.

 In this context is it worth emphasizing that
our approach is quite different from the one taken in  \cite{posi}.
Indeed, right from the outset our situation is different as there is
no geometrical boundary available. We rather  have  to ``derive'' the
boundary from the form.  Subsequently, our  considerations are then
founded in concepts and methods stemming from forms.  This may well
be of use if it comes to the investigation of corresponding problems
for  general Dirichlet forms.

We subsequently  sketch our results and thereby provide an overview
of the structure of the paper. At the same time this also allows us
to point out  points of contact to existing literature. For now, we
rather aim to convey the basic intuition rather than being precise
but refer to the corresponding sections for the details.

For every graph over a discrete set $X$ there is quadratic form
$\widetilde Q$ on $C(X)$ together with a set $\widetilde D$ -- the
functions of finite energy -- where $\widetilde Q$ stays finite, see
Section~\ref{2.1}. By the use of Gelfand theory the closure of the
algebra $\widetilde D\cap\ell^{\infty}(X)$ is isomorphic to $C(K)$
for some compact set in which $X$ can be densely embedded, see
Section~\ref{2.1}. Then, $\partial X=K\setminus X$ is called the
Royden boundary, Section~\ref{2.6}. Harmonic functions on $X$ induce
harmonic measures on the harmonic boundary $\partial_h X$ (a subset
of $\partial X$) and we choose one such measure $\mu$ as a reference
measure, see Section~\ref{4}.

There are two canonical $\ell^2(X,m)$-restrictions $Q^{(D)}$ and $Q^{(N)}$ of
$\widetilde Q$, where $(D)$ stands for ``Dirichlet-'' and $(N)$
stands for ``Neumann-boundary conditions''. Both forms are Dirichlet
forms.

We consider Dirichlet forms $Q$ that lie between $\QD$ and $\QN$,
i.e., forms which satisfy $Q^{(D)}\geq Q\geq Q^{(N)}$ in the sense
of quadratic forms, see Section~\ref{2.1}, which naturally appear
in the study of Dirichlet forms on discrete spaces, see
Remark~\ref{remark:forms between qd and qn}. For functions  $f$ of
finite energy we define the notion of  a trace $\Tr f$ which plays
the role of boundary values of $f$ on $\partial_h X$, see
Section~\ref{4}. Moreover, the so called Royden decompositions
yields that $f=f_{0}+f_{h}$ with $f_{0}\in D(\QDe)$ and $f_{h}$ is
harmonic, see Section~\ref{2.1} and~\ref{5}. Here the subscript
${\rm e}$ is used to denote the extended Dirichlet form of $\QD$.

The main result of the paper is Theorem~\ref{theorem:main},  which
has two parts. The first part states that $Q$ has the following
decomposition
\begin{align}
Q(f)&=\QDe(f_{0})+q(\Tr f) \label{equation:1}  \\
&= Q^{(N)}(f)+(q-q^{DN})(\Tr f),\label{equation:2} \end{align}
 where $q$ and $q^{DN}$ are Dirichlet forms in the
wide sense on $L^{2}(\partial_h X, \mu)$ and the difference $q-q^{DN}$ is Markovian. The form $q^{DN}$ is the Dirichlet-to-Neumann form relating
$Q^{(D)}$ and $Q^{(N)}$, see Definition~\ref{definition:trace dirichlet form}.

The second part of  Theorem~\ref{theorem:main}  provides a converse
to this decomposition. More precisely, let $q$ be a Dirichlet form
 in the wide sense  on
$L^{2}(\partial_h X,\mu)$ such that $q\geq q^{DN}$ and
$q-q^{DN}$ is Markovian. Then a form $Q$ on $\ell^2(X,m)$ defined  by either of the two previous formulas is a Dirichlet form between $\QD$ and $\QN$. Moreover, if the measure $m$ on the underlying space is finite, these two operations are inverse to each other.

In summary, we achieve a one-to-one correspondence between Dirichlet
forms between $\QD$ and $\QN$ and certain Dirichlet forms on the
harmonic boundary (at least when the underlying measure $m$ is
finite).

We finish the article with a discussion of an example in which
everything can be computed rather explicitly in Section \ref{toy}
and a (counter)example showing that $Q^{(D)}\geq Q$ does not in
general imply $Q \geq Q^{(N)}$ in Section~\ref{appendix:beispiel}.

Along the way
we make use of a few results which are certainly known to the
experts but do not seem to have appeared in print.  For the
convenience of the reader we include a discussion in the appendices.

\smallskip

A few remarks on the history are in order:

\begin{itemize}

\item Induced Dirichlet forms on the (Martin)boundary have been considered
before. In fact, the map $q^{DN}$ has been introduced on such
boundaries  by Silverstein in \cite{Si1}. There, it is also shown that
this form is a pure jump form and calculated the associated jump
measure. The work \cite{Si1} has been somewhat neglected in the past
and has only recently attracted attention by Kong  / Lau / Wong  in
their study of  jump processes on fractals which can be considerd as
Martin boundary of augmented trees \cite{Ka-sing}.

\item The above construction of the trace form can also be derived
as a special case of the theory developed in \cite{CF} based on
stochastic processes. The key virtue of our approach is that it
purely analytic and rather explicit, compare Remark~\ref{remark:our
trace v.s. fukushima trace}.

\item The decomposition $Q(f) = \QDe(f_0) + q(f)$ is known already
and could also in  principle be derived from \cite{CF}. The key
novelty of our approach is the proof of Posilicano's  observation
that $q - q^{DN}$ is Makovian in our context  by form methods (which
is very different from Posilicanos proof). This then allows us to
obtain the new decomposition $Q(f) = \QN(f)  + q'(f)$, which is used
in turn to characterize all Dirichlet forms. In this way, we obtain
an analogue to Posilicano's result in our context by a rather
different proof, see  Remark~\ref{remark:ende} as well.
\end{itemize}

\textbf{Acknowledgments.} M.~S.  acknowledges financial support of
the DFG via \emph{Graduiertenkolleg: Quanten- und
Gravitationsfelder} and M.~K., D.~L. and M.~S. via the Priority Program \emph{Geometry at infinity}.
 The authors gratefully acknowledge enlightening
discussions with Andrea Posilicano on boundary representations of
Dirichlet forms associated with elliptic operators in Euclidean
space.



\section{Transient graphs and the Royden boundary}\label{2.1}
\subsection{Graphs, forms and Laplacians}
In this section we introduce the basic objects of our studies. This includes a discussion of weighted graphs, the associated operators and the associated Dirichlet forms.

Let $X$ be a countably infinite set. A pair $(b,c)$ of functions $b:X\times X\to[0,\infty)$  and $c:X\to[0,\infty)$ is called a {\em weighted graph over $X$} if $b$ is symmetric,
vanishes on the diagonal and satisfies

 \[ \sum_{y\in X} b(x,y)<\infty\] for all $x\in X$.
Elements of $X$ are called {\em vertices} and pairs of vertices $(x,y)$ with $b(x,y) >0$ are called {\em edges}. The function $b$ is an {\em edge weight} and $c$ is a  {\em killing term}.  A finite sequence of vertices $(x_1,\ldots,x_n)$ such that for each $i=1,\ldots,n-1$ the pair $(x_i,x_{i+1})$  is an edge is called a {\em path}. We say that a graph is {\em connected} if for every two vertices $x,y\in X$ there is a path containing $x$ and $y$. From now on we make the  standing assumption that all weighted graphs treated in this paper are connected. For non-connected graphs all of our considerations can be directly applied to their (infinite) connected components.
%
%

Let $C(X)$ be the space of all real-valued functions on $X$ and let $C_c(X)$ be the space of functions in  $C(X)$ with finite support.
For a weighted graph $(b,c)$ over $X$ the domain of the {\em formal Laplacian $\LapUnw$} is defined as
$$\LapDom:=\{f\in C(X):\sum_{y\in X}b(x,y)|f(y)|<\infty \text{ for all } x\in X\},$$
on which it acts pointwise by
$$\LapUnw f(x) :=\sum_{y\in X} b(x,y)(f(x)-f(y))+c(x)f(x).$$
We call a function $h\in\LapDom$ {\em harmonic} if $\LapUnw h \equiv0$. Moreover, the {\em associated energy form} $\Qgen:C(X)\to [0,\infty]$ is given by
$$\Qgen(f)=\frac12\sum_{x,y\in X} b(x,y)(f(x)-f(y))^2+\sum_{x\in X} c(x)f(x)^2$$
and the space of {\em functions of finite energy} is
$$\Dgen:=\{f\in C(X):\Qgen(f)<\infty\}.$$
Polarization gives rise to a semi-scalar product on $\Dgen$, also denoted by $\Qgen$.
In this sense, we have $\ow{Q}(f) = \ow{Q}(f,f)$ whenever $f \in \ow{D}$. We will use the notation $Q(f):=Q(f,f)$ whenever we are dealing with a bilinear form.  According to Fatou's lemma, the form $\Qgen$ is lower semicontinuous with respect to pointwise convergence, i.e., if a sequence $(f_n)$ in $C(X)$ converges pointwise to a function $f \in C(X)$,  then
\[\Qgen(f)\leq\liminf\limits_{n\to\infty}\Qgen(f_n).\]
For a vertex $o \in X$ the connectedness of $(b,c)$ implies that the semi-inner product
$$\as{\cdot,\cdot}_o:\ow{D} \times \ow{D} \to \IR,\, \as{f,g}_o := \ow{Q}(f,g) + f(o) g(o)$$
is indeed an inner product and that the induced topology is independent of the choice of $o$, see e.g. \cite[Lemma~1.3]{uniform}; we denote the corresponding norm by $\|\cdot\|_o$. The following well-known properties of  $(\ow{D},\as{\cdot,\cdot}_o)$ are straightforward from the equivalence of the norms $\|\cdot\|_o$ and $\|\cdot\|_{o'}$, and the lower semicontinuity of $\ow{Q}$. See
\cite[Proposition 3.7]{canon}  for the proof.
\begin{proposition} \label{lemma:properties of dtilde}
 The space $(\ow{D},\as{\cdot,\cdot}_o)$ is a Hilbert space. Moreover, any sequence in $\ow{D}$ that converges with respect to $\|\cdot\|_o$ also converges pointwise and the corresponding limits agree.
\end{proposition}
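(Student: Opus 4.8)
The plan is to dispatch the two assertions in order, treating the pointwise-convergence statement first since it feeds into the completeness argument. That $\as{\cdot,\cdot}_o$ is a genuine inner product is already recorded above, so for the Hilbert space claim only completeness remains to be shown. Throughout I rely on the two stated facts: equivalence of the norms $\|\cdot\|_o$ as $o$ ranges over $X$, and lower semicontinuity of $\ow{Q}$ under pointwise convergence.

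First I would show that each point evaluation is a bounded functional on $(\ow{D},\|\cdot\|_o)$. From $\|f\|_x^2=\ow{Q}(f)+f(x)^2$ one reads off $|f(x)|\le\|f\|_x$, and norm equivalence supplies a constant $C_x$ with $\|f\|_x\le C_x\|f\|_o$; hence $|f(x)|\le C_x\|f\|_o$ for all $f\in\ow{D}$ and all $x\in X$. In particular, if $f_n\to f$ with respect to $\|\cdot\|_o$, then $|f_n(x)-f(x)|\le C_x\|f_n-f\|_o\to 0$ for every $x$, which is precisely the second assertion.

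For completeness, let $(f_n)$ be $\|\cdot\|_o$-Cauchy. The bound above makes $(f_n(x))_n$ Cauchy in $\IR$ for each $x$, so $f_n$ converges pointwise to some $f\in C(X)$. As the sequence is bounded, say $\|f_n\|_o\le M$, lower semicontinuity of $\ow{Q}$ gives $\ow{Q}(f)\le\liminf_n\ow{Q}(f_n)\le M^2<\infty$, so $f\in\ow{D}$. To obtain $\|f_n-f\|_o\to 0$ I would use that the functional $g\mapsto\|g\|_o^2=\ow{Q}(g)+g(o)^2$ is itself lower semicontinuous under pointwise convergence, being the sum of the lower semicontinuous $\ow{Q}$ and the map $g\mapsto g(o)^2$, which is continuous under pointwise convergence. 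Given $\epsilon>0$, pick $N$ with $\|f_n-f_m\|_o\le\epsilon$ for $n,m\ge N$; fixing $n\ge N$ and letting $m\to\infty$, the functions $f_n-f_m$ tend pointwise to $f_n-f$, whence $\|f_n-f\|_o^2\le\liminf_m\|f_n-f_m\|_o^2\le\epsilon^2$.

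The applications of norm equivalence and of Fatou-type lower semicontinuity are routine. The only step needing care is the last one: lower semicontinuity must be applied to the full norm rather than to $\ow{Q}$ alone, which is legitimate because the extra point-evaluation term is continuous along pointwise limits, so the sum stays lower semicontinuous and the standard passage from Cauchy to convergent goes through.
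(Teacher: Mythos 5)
Your proof is correct and uses exactly the two ingredients the paper identifies as the basis of this result (equivalence of the norms $\|\cdot\|_o$ and lower semicontinuity of $\ow{Q}$ under pointwise convergence), so it follows essentially the same route as the proof the paper defers to \cite[Proposition 3.7]{canon}: boundedness of point evaluations gives the pointwise-convergence claim, and the Fatou-type argument applied to the full norm yields completeness.
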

We let $$\Dzero:=\overline{C_c(X)}^{\|\cdot\|_o}$$ be the closure of $C_c(X)$ in $(\ow{D},\as{\cdot,\cdot}_o)$. It is independent of the choice of $o \in X$ and can be thought of as the space of functions of finite energy that vanish at infinity in a suitable sense.

Recall that a function $C:\IR\to\IR$ is called {\em normal contraction} if both $|C(r)|\leq |r|$ and $|C(r)-C(s)|\leq|r-s|$ hold  for every $r,s\in\IR$.
A quadratic form $Q$ whose domain $ D(Q) $ is a vector space of functions
is called \emph{Markovian} if $C\circ f\in D(Q)$ and $Q(C\circ f)\leq Q(f)$ hold for every normal contraction $C$ and every $f\in D(Q)$.

Obviously, $\Qgen$ is Markovian, since $\Qgen(C\circ f)\leq \Qgen(f)$ holds for every $f\in C(X)$ and every normal
contraction $C$ by a simple calculation and, hence, $f \in \ow{D}$ implies $C \circ f \in \ow{D}$. This   statement  is also true
with $\ow{D}$ being replaced by $\Dzero$, i.e., for any normal contraction $C$ and any $f \in \Dzero$ we have $C \circ f \in \Dzero$, see e.g. \cite[Lemma~1.5]{uniform}.

The connection between $\Qgen$ and $\LapUnw$ is the following  discrete version of Green's formula. In the presented form it is a slight generalization of \cite[Lemma~7.7]{kasue}. We include a proof for the convenience of the reader.

\begin{proposition}[Green's formula]\label{Green}
 Let $(b,c)$ be a graph over $X$. Then $\ow{D} \subseteq \ow{F}$ and for $f \in \Dzero$ and $g \in \ow{D}$ that satisfy
 $$\sum_{x \in X} |f(x) \mathcal{L} g (x)| < \infty$$
 we have
 $$\Qgen(f,g)=\sum_{x\in X}f(x)\LapUnw g(x).$$
 If  $f \in C_c(X)$, then
 $$\ow{Q}(f,g) =  \sum_{x\in X}\LapUnw f(x) g(x).$$
\end{proposition}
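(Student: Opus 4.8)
The plan is to dispatch the two elementary points first — the inclusion $\Dgen\subseteq\LapDom$ and the identity for $f\in C_c(X)$ — by direct manipulation, and then to bootstrap the $C_c(X)$-identity to all $f\in\Dzero$ by approximation. For the inclusion, I would fix $x$ and split $\sum_y b(x,y)|f(y)|\leq |f(x)|\sum_y b(x,y)+\sum_y b(x,y)|f(x)-f(y)|$: the first summand is finite by the standing summability of $b$, and Cauchy--Schwarz bounds the second by $\bigl(\sum_y b(x,y)\bigr)^{1/2}\bigl(2\Qgen(f)\bigr)^{1/2}$, which is finite for $f\in\Dgen$. In particular $\LapUnw g$ is well defined for every $g\in\Dgen$, so the right-hand sides make sense.

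For $f\in C_c(X)$ I would write out the polarized form $\Qgen(f,g)=\tfrac12\sum_{x,y}b(x,y)(f(x)-f(y))(g(x)-g(y))+\sum_x c(x)f(x)g(x)$ and symmetrize: splitting the product and relabelling $x\leftrightarrow y$ (using that $b$ is symmetric) turns the double sum into $\sum_{x,y}b(x,y)f(x)(g(x)-g(y))$, whence $\Qgen(f,g)=\sum_x f(x)\LapUnw g(x)$. All rearrangements are legitimate because $f$ is finitely supported and, by the previous step, each inner sum $\sum_y b(x,y)|g(x)-g(y)|$ converges. Running the same symmetrization but isolating the factor $g(x)$ instead yields the second identity $\Qgen(f,g)=\sum_x\LapUnw f(x)\,g(x)$; here the outer sum is genuinely infinite, but it converges absolutely since for $x\notin\supp f$ one has $\LapUnw f(x)=-\sum_{y\in\supp f}b(x,y)f(y)$ and $\sum_x b(x,y)|g(x)|<\infty$ for each fixed $y$ by the inclusion above.

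The main work is the passage from $C_c(X)$ to $f\in\Dzero$. I would choose $f_n\in C_c(X)$ with $f_n\to f$ in $\|\cdot\|_o$; then $\Qgen(f_n,g)\to\Qgen(f,g)$ because $\Qgen(\cdot,g)$ is $\|\cdot\|_o$-continuous. The delicate point is to pass to the limit on the right, i.e. to show $\sum_x f_n(x)\LapUnw g(x)\to\sum_x f(x)\LapUnw g(x)$: neither $\sum_x|\LapUnw g(x)|$ nor $\sum_x|f_n(x)\LapUnw g(x)|$ is controlled a priori, so a dominating function is missing. To manufacture one I would replace $f_n$ by its clamping $\tilde f_n$ to the pointwise interval $[-|f(x)|,|f(x)|]$, so that $|\tilde f_n|\leq|f|$ while $\tilde f_n\in C_c(X)$ (the support is preserved, as $0$ is clamped to $0$) and $\tilde f_n\to f$ pointwise. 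A projection estimate $|\tilde f_n(x)-\tilde f_n(y)|\leq |f_n(x)-f_n(y)|+\bigl||f(x)|-|f(y)|\bigr|$ shows that $(\tilde f_n)$ is bounded in $(\Dgen,\|\cdot\|_o)$; being pointwise convergent, it then converges weakly to $f$ (point evaluations are continuous by Proposition~\ref{lemma:properties of dtilde}). By Mazur's lemma suitable convex combinations $h_m$ of the $\tilde f_n$ converge to $f$ in $\|\cdot\|_o$; these still lie in $C_c(X)$, still satisfy $|h_m|\leq|f|$, and converge pointwise to $f$. Applying the $C_c(X)$-identity to $h_m$, letting $m\to\infty$, and combining form-continuity on the left with dominated convergence on the right (the dominant $|f\,\LapUnw g|$ is summable by hypothesis) yields $\Qgen(f,g)=\sum_x f(x)\LapUnw g(x)$. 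The crux — and the step I expect to cost the most — is exactly this simultaneous control of the two modes of convergence, norm and weighted-$\ell^1$; the clamping-plus-Mazur device is what reconciles them, the underlying difficulty being that a normal contraction need not act continuously on $(\Dgen,\|\cdot\|_o)$.
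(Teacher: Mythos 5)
Your proposal is correct and takes essentially the same route as the paper: the paper's approximating sequence $((\varphi_n\wedge|f|)\vee 0)-(((-\varphi_n)\wedge|f|)\vee 0)$ is exactly your clamping of $\varphi_n$ to $[-|f|,|f|]$, and it likewise combines $\ow{Q}$-boundedness, pointwise convergence, a weak-convergence argument, and dominated convergence with dominant $|f\,\LapUnw g|$. The only differences are cosmetic: the paper cites references for the inclusion $\ow{D}\subseteq\ow{F}$ and for the $C_c(X)$-identities, which you prove directly, and it pairs the $\ow{Q}$-weakly convergent sequence against $g$ directly, so your Mazur/convex-combination step, while valid, is not needed.
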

\begin{proof}
 According to \cite[Proposition~3.8]{HKLW}, the inclusion  $\ow{D} \subseteq \ow{F}$ holds. For $f \in C_c(X)$ and $g \in \ow{D}$ \cite[Lemma~4.7]{solu} yields
$$\Qgen(f,g)=\sum_{x\in X}f(x)\LapUnw g(x) = \sum_{x\in X}\LapUnw f(x) g(x).$$
It remains to prove that the first equality remains true for $f \in \Dzero$.

To this end, let $f \in \Dzero, g\in \ow{D}$ and choose a sequence $(\varphi_n)$ in $C_c(X)$ that converges to $f$ with respect to $\|\cdot\|_o$. According to Proposition~\ref{lemma:properties of dtilde}, the sequence $(\varphi_n)$ converges pointwise to $f$. We set
%
$$f_n :=  ((\varphi_n \wedge |f|)\vee 0)-(((-\varphi_n) \wedge |f|)\vee 0). $$
The sequence $(f_n)$ belongs to $C_c(X)$, converges pointwise to $f$ and satisfies $|f_n| \leq |f|$. Using that $\ow{Q}^{\frac12}$ is Markovian and a semi-norm, we infer
$$\ow{Q}(f_n)^{1/2}\leq\ow{Q}(\varphi_n \wedge |f|)^{\frac12}+\ow{Q}((-\varphi_n) \wedge |f|)^{\frac12}.$$  For $a,b \in \IR$ the identity $2 (a \wedge b) = a + b - |a-b|$ holds. Therefore, the compatibility of $\ow{Q}$ with normal contractions implies
$$\ow{Q}(f_n)^{1/2} \leq 2\ow{Q}(\varphi_n)^{1/2} + 2 \ow{Q}(f)^{1/2}.$$
This shows that $(f_n)$ is also $\ow{Q}$-bounded and so we can infer $f_n \to f$ $\ow{Q}$-weakly from Lemma~\ref{lemma:q-wekly convergent sequences}. Hence, Lebesgue's dominated convergence theorem and what was already proven yields
$$\ow{Q}(f,g) = \lim_{n \to \infty} \ow{Q}(f_n,g) = \sum_{x \in X} f_n(x) \mathcal{L} g(x) =\sum_{x \in X} f(x) \mathcal{L} g(x).$$
This finishes the proof.
\end{proof}

Next we discuss Dirichlet forms on graphs.

A  function $m:X\to [0,\infty)$ gives rise to a measure on  subsets of $X$, also denoted by $m$, via
\[m(A):=\sum_{x\in A} m(x).\]
We call the pair $(X,m)$ a {\em discrete measure space} if $m$ has full support, i.e., if $m(x)>0$ holds for every $x\in X$. In this case, we let
$\ltwo$ be the space of $m$-square summable real-valued functions on $X$ with the corresponding  norm $\|\cdot\|_2$
and inner product $\langle\cdot,\cdot\rangle_2$.

For a graph $(b,c)$ over a discrete measure space $(X,m)$, we let $\NeuDir$ be the restriction of $\ow{Q}$ to $D(\NeuDir):= \ow{D} \cap \ell^2(X,m)$ and we let $\DirDir$ be the restriction of $\ow{Q}$ to
$$D(\DirDir) := \overline{C_c(X)}^{\|\cdot\|_\NeuDir},$$
where the closure is taken in $D(\NeuDir)$ with respect to the form norm
$\|\cdot\|_\NeuDir$ on $D(\NeuDir)$ given by $\|f\|_\NeuDir:= (\NeuDir(f)+\|f\|^2_2)^{\frac 12}. $
Both $\DirDir$ and $\NeuDir$ are Dirichlet forms, see e.g. \cite{HKLW}. The form $\DirDir$ can be thought of as having Dirichlet boundary conditions at infinity and $\NeuDir$  can be thought of as having Neumann boundary conditions at infinity.  If necessary we highlight the dependence of $\QD$ and $\QN$ on the graph by adding the subscript $(b,c)$, i.e., we write $Q^{(D)}_{(b,c)}$ or $Q^{(N)}_{(b,c)}$, respectively.

Given a quadratic form $Q$ on a vector space $V$ with domain $D(Q) \subseteq V$, we set $Q(f) := \infty$ whenever $f \in V \setminus D(Q)$. In this sense, $D(Q) = \{f \in V\, :\, Q(f) < \infty\}$. We say that two quadratic forms $Q_1,Q_2$ on $V$ satisfy $$Q_1 \geq Q_2$$ if for all $f \in V$ we have $Q_1(f) \geq Q_2(f)$. This is equivalent to  $D(Q_1) \subseteq D(Q_2)$ and $Q_1(f) \geq Q_2(f)$ for all $f \in D(Q_1)$. As mentioned in the introduction, for a weighted graph $(b,c)$ over a discrete measure space $(X,m)$ it is the main goal of this paper to characterize all Dirichlet forms $Q$ on $\ell^2(X,m)$ that satisfy $\QD \geq Q \geq \QN$.

\begin{remarks} \label{remark:forms between qd and qn}
 On a discrete space it is quite natural to consider Dirichlet forms that satisfy $\QD\geq Q \geq \QN$.
 If $Q$ is a Dirichlet form on $\ell^2(X,m)$ such that $D(Q) \cap C_c(X)$ is dense in $C_c(X)$ with respect to uniform convergence, it follows from the considerations in \cite[Section~2]{stoch} that there exists a graph $(b,c)$ over $X$ such that $Q$ extends the  form $Q^{(D)}_{(b,c)}$ and so  $Q^{(D)}_{(b,c)} \geq Q$.
 If $c = 0$,  the discussion in \cite[Chapter~3]{Sch} implies that  any  Dirichlet form $Q$ that extends $\DirDir$ automatically satisfies $Q \geq \NeuDir$. Indeed, we shall perform the computations to prove this result in Section~\ref{section:differences of dirichlet forms} for other purposes.
 If $c \neq 0$ and $Q$ is an extension of $\DirDir$, it need not be true that   $Q \geq \NeuDir$ without any additional assumptions. In the more general context of Silverstein extensions of energy forms this phenomenon was discovered in \cite{Sch}; we give an example that fits into our setting of Dirichlet forms on infinite graphs in Section~\ref{appendix:beispiel}.   One way of guaranteeing that  $Q \geq \NeuDir$  is demanding that the self-adjoint operator associated with $Q$ is a restriction of the scaled formal Laplacian $\frac{1}{m} \mathcal{L}$. For locally finite graphs this is discussed in \cite{HKLW}.
\end{remarks}
Let  $Q$ be a Dirichlet form on $\ell^2(X,m)$. We say that a sequence $(f_n)$ in $D(Q)$ is $Q$-Cauchy if it is a Cauchy-sequence with respect to the semi-norm $Q(\cdot)^{\frac12}$. The {\em extended Dirichlet space $D(\Qe)$} of $Q$ is defined by
 $$D(\Qe) :=  \{f \in C(X)\, : \, \text{ex. }  Q \text{-Cauchy sequence } (f_n) \text{ in } D(Q) \text{ with } f_n \to f \text{ pointwise}\}.$$
 Let $f \in D(\Qe)$ and let $(f_n)$ be  a $Q$-Cauchy sequence in $D(Q)$ with $f_n \to f$ pointwise; such a sequence is called {\em approximating sequence for $f$}. We  extend $Q$ to $D(\Qe)$ by letting
 $$\Qe(f) := \begin{cases}
              \lim_{n \to \infty} Q(f_n) &\text{if } f\in D(\Qe) \text{ and } (f_n) \text{ is an approximating sequence for }f,\\
              \infty &\text{else}.
             \end{cases}
$$
It is proven in  \cite[Lemma 1]{Schmu} that the definition of $\Qe$ does not depend on the choice of the approximating  sequence. Moreover, $\Qe$ is a quadratic form on $C(X)$, the so-called {\em extended Dirichlet form of $Q$}. For later purposes we need the following result on lower semicontinuity of the extended form with respect to pointwise convergence. It is a special case of \cite[Lemma~3]{Schmu2}, see also \cite[Theorem~1.59]{Sch} for a simplified proof.
\begin{proposition}  \label{proposition:pointwise lower semicontinuity extended space}
 Let $Q$ be a Dirichlet form on $\ell^2(X,m)$ and let $\Qe$ be its extended Dirichlet form. For every sequence $(f_n)$ in $C(X)$ that converges pointwise to some $f \in C(X)$
 $$\Qe(f) \leq \liminf_{n\to \infty}\Qe(f_n).$$
\end{proposition}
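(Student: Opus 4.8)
The plan is to prove this pointwise lower semicontinuity by reducing the arbitrary sequence $(f_n)$ to a genuine approximating sequence for $f$ consisting of functions in $D(Q)$, and then to upgrade a pointwise-convergent, energy-bounded sequence to a $Q$-Cauchy one by a weak-compactness argument in a suitable Hilbert space. First I would dispose of the trivial case: if $\liminf_n \Qe(f_n) = \infty$ there is nothing to prove, so after passing to a subsequence I may assume that $L := \lim_n \Qe(f_n)$ exists, is finite, and that every $f_n$ lies in $D(\Qe)$. For each $n$, the definition of $\Qe$ given above provides a $Q$-Cauchy sequence in $D(Q)$ converging pointwise to $f_n$ along which $Q$ converges to $\Qe(f_n)$; fixing an enumeration $(x_j)_{j}$ of $X$, I select a single member $g_n \in D(Q)$ of this sequence with $Q(g_n) \le \Qe(f_n) + 1/n$ and $|g_n(x_j) - f_n(x_j)| \le 1/n$ for all $j \le n$. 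A short argument then shows $g_n \to f$ pointwise (combining $g_n \approx f_n$ on initial segments with $f_n \to f$), while $\limsup_n Q(g_n) \le L$.

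The core step is to turn $(g_n)$ into a $Q$-Cauchy approximating sequence for $f$. Let $\mathcal{K}$ denote the Hilbert space obtained by completing $D(Q)$ with respect to the seminorm $Q(\cdot)^{1/2}$ (after quotienting out its kernel), and write $[g]$ for the image of $g \in D(Q)$. The sequence $([g_n])$ is bounded in $\mathcal{K}$, so a subsequence converges weakly, $[g_{n_j}] \rightharpoonup \xi$. By Mazur's lemma applied to the tails $\{[g_{n_i}] : i \ge j\}$, whose weak limit is still $\xi$, I obtain for each $j$ a finite convex combination $h_j = \sum_{i \ge j} \lambda_i^{(j)} g_{n_i} \in D(Q)$ with $\|[h_j] - \xi\|_{\mathcal K} \to 0$. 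Consequently $(h_j)$ is $Q$-Cauchy; and since each $h_j$ is a convex combination of functions $g_{n_i}$ with indices $\ge j$, the pointwise convergence $g_{n_i} \to f$ is inherited and $h_j \to f$ pointwise.

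Thus $(h_j)$ is an approximating sequence for $f$, whence $f \in D(\Qe)$ and $\Qe(f) = \lim_j Q(h_j) = \|\xi\|_{\mathcal K}^2$. Weak lower semicontinuity of the norm on $\mathcal{K}$ yields $\|\xi\|_{\mathcal K}^2 \le \liminf_j Q(g_{n_j}) \le \limsup_n Q(g_n) \le L$, and since $L = \liminf_n \Qe(f_n)$ by the reduction, this is the claim. The main obstacle is precisely this last upgrade: the selected $g_n$ (equivalently the original $f_n$) need not be $Q$-Cauchy, so one cannot directly read off an approximating sequence. Mazur's lemma resolves this by converting weak convergence into strong, hence $Q$-Cauchy, convergence through convex combinations, and the crucial bookkeeping is to combine only tail indices so that convexity preserves the pointwise limit $f$. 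A secondary point requiring care is the simultaneous control of energy and of pointwise values when choosing the $g_n$, which is exactly what makes the reduction to functions in $D(Q)$ legitimate.
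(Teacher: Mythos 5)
Your proof is correct. There is no in-paper argument to compare it against line by line: the authors do not prove Proposition~\ref{proposition:pointwise lower semicontinuity extended space} themselves but quote it from \cite[Lemma~3]{Schmu2} and \cite[Theorem~1.59]{Sch}. Your argument is a clean, self-contained version of the standard proof of this Fatou-type property: the diagonal selection (legitimate since $X$ is countable) reduces the problem to a $Q$-bounded sequence $(g_n)$ in $D(Q)$ converging pointwise to $f$, and the passage from weak to strong convergence via Mazur's lemma produces a $Q$-Cauchy approximating sequence $(h_j)$ for $f$; after that, only the well-definedness of $\Qe$ (independence of the approximating sequence, which the paper has already established by citation to \cite{Schmu}) is needed to identify $\lim_j Q(h_j)$ with $\Qe(f)$. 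Notably, the Markov property of $Q$ is never used beyond this well-definedness. Two remarks on the mechanics. First, your insistence on convex combinations of \emph{tails} is exactly the right bookkeeping: elements of the abstract completion $\mathcal{K}$ are not functions, so pointwise convergence to $f$ must be carried by the combinations themselves, and restricting to indices $\ge j$ is what makes this work. Second, the same effect is achieved slightly more economically by the Banach--Saks theorem, which is the device the paper itself uses for arguments of precisely this type (see the proofs of Lemma~\ref{proposition:properties of trace} and Theorem~\ref{theorem:q-qn is markovian}): Cesàro means $\frac{1}{N}\sum_{k=1}^{N} g_{n_k}$ of a suitable subsequence converge in norm in $\mathcal{K}$, hence are $Q$-Cauchy, and they automatically converge pointwise to $f$ because arithmetic means of a convergent sequence of reals converge to the same limit; this removes the tail-indexing entirely. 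Either way one lands on the same chain $\Qe(f)=\|\xi\|_{\mathcal{K}}^2\le\liminf_j Q(g_{n_j})\le\liminf_n \Qe(f_n)$, so your proof stands.
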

The previous proposition implies  that $\Qe$ inherits the Markov property from $Q$. We finish this subsection  by mentioning the following properties of the extended forms
of Dirichlet forms between $\QD$ and $\QN$.
\begin{lemma} \label{lemma:extended dirichlet spaces of qd and qn}
 Let $(b,c)$ be a graph over a discrete measure space $(X,m)$. Let $Q$ be a Dirichlet form on $\ltwo$ such that $\DirDir\geq Q\geq \NeuDir$. Then $\QDe \geq \Qe \geq \QNe$ and
 $$\Dzero = D(\QDe) \subseteq D(\Qe) \subseteq D(\QNe) \subseteq \ow{D}.$$
 Moreover, for every $f \in \Dzero$ we have
 $$\QDe(f)= \Qe(f) = \QNe(f)  = \ow{Q}(f),$$
 and for every $f \in D(\QNe)$ we have
 $$\QNe(f) = \ow{Q}(f).$$
\end{lemma}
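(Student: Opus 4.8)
The plan is to reduce everything to two facts: a monotonicity principle for extended Dirichlet forms, and an identification of $D(\QDe)$ with $\Dzero$ on which $\QDe$ coincides with $\Qgen$. The whole argument rests on one elementary device, which I isolate first: if a sequence $(f_n)$ in $\Dgen$ is Cauchy with respect to the seminorm $\Qgen(\cdot)^{1/2}$ and converges pointwise to $f$, then it is in fact $\|\cdot\|_o$-Cauchy. Indeed, $\|f_n-f_m\|_o^2 = \Qgen(f_n-f_m) + (f_n(o)-f_m(o))^2$, the first summand tends to $0$ by assumption and the second does so because $(f_n(o))$ converges. Since $(\Dgen,\as{\cdot,\cdot}_o)$ is a Hilbert space by Proposition~\ref{lemma:properties of dtilde}, $(f_n)$ then has an $\|\cdot\|_o$-limit $g\in\Dgen$, and by the same proposition $g=f$ pointwise; in particular $f\in\Dgen$ and $\Qgen(f_n)\to\Qgen(f)$.

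First I would prove the monotonicity principle: if $Q_1\geq Q_2$ are Dirichlet forms on $\ltwo$, then their extended forms satisfy $Q_{1,{\rm e}}\geq Q_{2,{\rm e}}$. Given $f\in D(Q_{1,{\rm e}})$ with approximating sequence $(f_n)\subseteq D(Q_1)$, the assumption gives $D(Q_1)\subseteq D(Q_2)$ and $Q_2(f_n-f_m)\leq Q_1(f_n-f_m)\to 0$, so $(f_n)$ is an approximating sequence for $f$ with respect to $Q_2$ as well; hence $f\in D(Q_{2,{\rm e}})$ and $Q_{2,{\rm e}}(f)=\lim_n Q_2(f_n)\leq \lim_n Q_1(f_n)=Q_{1,{\rm e}}(f)$. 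Applying this to $\DirDir\geq Q$ and to $Q\geq\NeuDir$ yields both $\QDe\geq\Qe\geq\QNe$ and the two middle inclusions $D(\QDe)\subseteq D(\Qe)\subseteq D(\QNe)$.

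Next I would identify the two outer spaces. For $\QNe$: any $f\in D(\QNe)$ has an approximating sequence $(f_n)\subseteq D(\NeuDir)=\Dgen\cap\ltwo$ that is $\Qgen$-Cauchy and converges pointwise to $f$; by the device above $f\in\Dgen$ and $\QNe(f)=\lim_n\Qgen(f_n)=\Qgen(f)$, which proves $D(\QNe)\subseteq\Dgen$ together with the last displayed identity. For $\QDe$: one inclusion is $\Dzero\subseteq D(\QDe)$, since an energy-approximating sequence in $C_c(X)$ for $f\in\Dzero$ is automatically a $\DirDir$-Cauchy approximating sequence and gives $\QDe(f)=\Qgen(f)$. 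For the reverse inclusion I would first note $D(\DirDir)\subseteq\Dzero$ (an $\|\cdot\|_\NeuDir$-approximating sequence in $C_c(X)$ converges in $\ltwo$, hence pointwise, hence at $o$, so it is $\|\cdot\|_o$-convergent), and then apply the device to an approximating sequence $(f_n)\subseteq D(\DirDir)\subseteq\Dzero$ of a given $f\in D(\QDe)$: it is $\|\cdot\|_o$-Cauchy, its limit lies in the closed subspace $\Dzero$ and equals $f$. Thus $D(\QDe)=\Dzero$ and $\QDe=\Qgen$ there.

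Finally, for $f\in\Dzero=D(\QDe)$ the inclusions give $f\in D(\Qe)\cap D(\QNe)$, and the monotonicity from the second paragraph together with $\QDe(f)=\Qgen(f)$ and $\QNe(f)=\Qgen(f)$ squeezes $\Qgen(f)=\QDe(f)\geq\Qe(f)\geq\QNe(f)=\Qgen(f)$, forcing all four values to agree. I expect the only genuine step to be the identification $D(\QDe)=\Dzero$, and within it the reverse inclusion, which is exactly where the passage from pointwise convergence back to norm convergence via the value at $o$ is essential; everything else is bookkeeping built on the monotonicity principle.
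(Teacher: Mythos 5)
Your proof is correct, and its overall skeleton --- a monotonicity principle for extended forms, identification of the two endpoint spaces, then a squeeze --- parallels the paper's own argument. The genuine difference is the technical engine: where the paper uses the Fatou-type lower semicontinuity of $\ow{Q}$ under pointwise convergence (estimating $\ow{Q}(f-f_n)\leq\liminf_{m\to\infty}\ow{Q}(f_m-f_n)$ along a $\QD$- resp.\ $\QN$-Cauchy approximating sequence), you run everything through your ``device'': an $\ow{Q}$-Cauchy, pointwise convergent sequence is automatically $\|\cdot\|_o$-Cauchy, so completeness of $(\ow{D},\as{\cdot,\cdot}_o)$ and continuity of point evaluations (Proposition~\ref{lemma:properties of dtilde}) give $\|\cdot\|_o$-convergence to the pointwise limit. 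This buys uniformity --- a single lemma yields $D(\QNe)\subseteq\ow{D}$ with $\QNe=\ow{Q}$ there, the inclusion $D(\QDe)\subseteq\Dzero$, and $\QDe=\ow{Q}$ on $\Dzero$ --- and it gives the slightly stronger conclusion that approximating sequences converge in norm, not just in energy. A second, smaller divergence: the paper obtains $\QDe=\Qe=\QNe$ on $D(\QDe)$ directly from the pointwise equality $\QD=Q=\QN=\ow{Q}$ on $D(\QD)$, while you recover it by squeezing $\QDe\geq\Qe\geq\QNe$ between the endpoint identities $\QDe=\ow{Q}=\QNe$ on $\Dzero$; the two are equally economical. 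You also make explicit the inclusion $D(\QD)\subseteq\Dzero$ (via $\ell^2$-convergence forcing convergence at $o$), which the paper leaves implicit in its terse appeal to Proposition~\ref{lemma:properties of dtilde} for the equality $D(\QDe)=\Dzero$; that step is genuinely needed for the reverse inclusion, so spelling it out is a point in your favor.
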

\begin{proof}
 The definition of the extended Dirichlet space implies $\QDe \geq \Qe \geq \QNe$ and the definition of $\QD$ and $\QN$ and the inequalities $\QD \geq Q \geq \QN$ yield
$$\QD(f) = \QN(f) = Q(f) = \ow{Q}(f)\text{ for all } f \in D(\QD).$$
 From this equality it immediately follows that
$$\QDe(f) = \QNe(f) = \Qe(f) \text{ for all } f \in D(\QDe). $$
 The equality $D(\QDe) = \Dzero$  is a consequence of Proposition~\ref{lemma:properties of dtilde}. To prove the claim on $\ow{Q}$, for $f \in \Dzero$ we let $(f_n)$ be a $\QD$-Cauchy sequence in $D(\QD)$ that converges pointwise to $f \in C(X)$. The lower semicontinuity of $\ow{Q}$ shows
$$\ow{Q}(f - f_n) \leq \liminf_{m \to \infty} \ow{Q}(f_m-f_n) = \liminf_{m \to \infty} \QD(f_m-f_n), $$
and so we obtain $\ow{Q}(f) = \QDe(f)$. A similar computation with $\QD$ being replaced by $\QN$ shows $D(\QNe) \subseteq \ow{D}$ and $\ow{Q}(f) = \QNe(f)$ for $f \in D(\QNe)$. This finishes the proof.
\end{proof}

We  note the following properties of $\ow{D}$, the domains of Dirichlet forms and extended Dirichlet forms.
\begin{proposition}\label{proposition:algebraic and order properties of domains}
 Let $(b,c)$ be a graph over the discrete measure space $(X,m)$. Then $\ow{D} \cap \ell^\infty(X)$ is an algebra and $\ow{D}$ is a lattice, i.e. for every $f,g\in\ow{D}$ we have
 $f\vee g,f\wedge g\in \ow{D}$. Moreover, for any $f \in \ow{D}$ the convergence
 $$\ow{Q}(f- f^{(n)}) \to 0,\text{ as } n \to \infty,$$
holds, where $f^{(n)} := (f \wedge n) \vee (-n)$. The same is true with $(\ow{Q},\ow{D})$ being replaced by $(Q,D(Q))$ or $(\Qe,D(\Qe))$, where  $Q$ is a Dirichlet form on $\ell^2(X,m)$.
\end{proposition}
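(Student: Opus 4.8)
The plan is to prove the three assertions---that $\BFEspace$ is an algebra, that $\ow D$ is a lattice, and that the truncations $f^{(n)}$ approximate $f$ in energy---first for $(\ow Q,\ow D)$, where the explicit expression for $\ow Q$ is available, and then to transfer each of them to $(Q,D(Q))$ and $(\Qe,D(\Qe))$ using only the Markov property and suitable lower semicontinuity. I begin with the lattice property, which is identical for all three forms. From $f\vee g=\tfrac12(f+g+|f-g|)$ and $f\wedge g=\tfrac12(f+g-|f-g|)$, the fact that the domain in question is a vector space, and the observation that $r\mapsto|r|$ is a normal contraction, the Markov property yields $|f-g|$ in the domain and hence $f\vee g,f\wedge g$ in the domain as well; for $(Q,D(Q))$ one additionally notes that these combinations stay in $\ltwo$ whenever $f,g$ do.

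For the algebra property of $\BFEspace$ I would argue directly from the defining sums. Writing $f(x)g(x)-f(y)g(y)=f(x)(g(x)-g(y))+g(y)(f(x)-f(y))$ and using $(a+b)^2\le2a^2+2b^2$ together with the boundedness of $f$ and $g$ gives the pointwise estimate $(f(x)g(x)-f(y)g(y))^2\le2\|f\|_\infty^2(g(x)-g(y))^2+2\|g\|_\infty^2(f(x)-f(y))^2$, and similarly $c(x)(f(x)g(x))^2\le\|g\|_\infty^2c(x)f(x)^2$; summing against $b$ and $c$ shows that $\ow Q(fg)$ is bounded by a constant times $\|f\|_\infty^2\ow Q(g)+\|g\|_\infty^2\ow Q(f)$, hence finite, so $fg\in\BFEspace$. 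For $(Q,D(Q))$ and $(\Qe,D(\Qe))$ no such explicit sum is available, and here I would invoke the standard product estimate for Dirichlet forms, $Q(fg)^{1/2}\le\|f\|_\infty Q(g)^{1/2}+\|g\|_\infty Q(f)^{1/2}$ for $f,g\in D(Q)\cap\linf$, which also persists for the extended form.

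For the truncation statement I would first record that $f^{(n)}=C_n\circ f$ with $C_n(r)=(r\wedge n)\vee(-n)$, and that $D_n:=\mathrm{id}-C_n$ is again a normal contraction, so $f-f^{(n)}=D_n\circ f$. In the case of $\ow Q$ this gives the domination $(D_n f(x)-D_n f(y))^2\le(f(x)-f(y))^2$ and $(D_n f(x))^2\le f(x)^2$, while $D_n f\to0$ pointwise as $n\to\infty$; applying dominated convergence to the two sums defining $\ow Q$ then yields $\ow Q(f-f^{(n)})\to0$. For $(Q,D(Q))$ the explicit sum is gone, so I would instead run a soft argument: $Q(f^{(n)})\le Q(f)$ by the Markov property, $f^{(n)}\to f$ in $\ltwo$ by dominated convergence, and the identity $C_n\circ C_{n+1}=C_n$ shows that $Q(f^{(n)})$ is non-decreasing. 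Hence $(f^{(n)})$ is bounded in the form Hilbert space $(D(Q),Q(\cdot,\cdot)+\langle\cdot,\cdot\rangle_2)$ and converges weakly to $f$ there (the weak limit being $f$ since it is already the $\ltwo$-limit), while closedness of $Q$ forces $Q(f^{(n)})\to Q(f)$; weak convergence together with convergence of the norms upgrades to strong convergence, that is $Q(f-f^{(n)})\to0$.

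The main obstacle is the same truncation statement for $\Qe$, where there is no $\ltwo$-inner product in which to run the weak-compactness argument. Here I would replace $L^2$-lower semicontinuity by the pointwise lower semicontinuity of $\Qe$ from Proposition~\ref{proposition:pointwise lower semicontinuity extended space}, which together with the monotonicity of $\Qe(f^{(n)})$ gives $\Qe(f^{(n)})\to\Qe(f)$, and then carry out the weak-convergence/norm-convergence scheme inside the extended Dirichlet space $(D(\Qe),\Qe)$. The genuinely delicate point is to equip this space with a Hilbert topology in which $(f^{(n)})$ is weakly precompact and to identify its weak limit as $f$; this is precisely the kind of statement secured in \cite{Schmu2}, which I would cite should a self-contained argument become too lengthy.
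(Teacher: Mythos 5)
Most of your proposal is correct, and it is considerably more self-contained than the paper's proof, which consists of the single remark that the assertions for $(\ow{Q},\ow{D})$ follow ``by straightforward computations'' together with a citation of \cite[Theorem~1.4.2, Corollary~1.5.1]{FOT} for the two Dirichlet-form cases. Your lattice argument via $|f-g|$, your explicit product estimate for $\ow{Q}$, the observation that $D_n=\mathrm{id}-C_n$ is again a normal contraction so that dominated convergence applies to the two sums defining $\ow{Q}$, and your truncation proof for $(Q,D(Q))$ (Markov bound, $\ltwo$-convergence, weak convergence in the form Hilbert space, convergence of the norms, hence strong convergence) are all sound. Note only that for the algebra property of $D(Q)\cap\linf$ and $D(\Qe)\cap\linf$ you are in effect making the very appeal to \cite{FOT} that the paper makes, since the product estimate you invoke is exactly Theorem~1.4.2 there.

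The genuine gap is the truncation statement for $(\Qe,D(\Qe))$, and it sits exactly where you suspect. The plan of running the weak-compactness scheme ``inside $(D(\Qe),\Qe)$'' fails as stated: $\Qe$ is only a semi-norm (its kernel contains the constants whenever the form is recurrent), and completeness of $D(\Qe)$ modulo this kernel is itself a nontrivial fact --- it is a theorem for transient forms but is not available here for free; moreover, \cite{Schmu2} is used in the paper for lower semicontinuity, not for truncation, so your fallback citation does not obviously deliver what you need (the correct external reference would again be \cite[Corollary~1.5.1]{FOT}, i.e.\ exactly what the paper cites). However, your argument can be closed with tools already in the paper and no Hilbert structure at all. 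You have already obtained $\Qe(f^{(n)})\to\Qe(f)$ from the Markov property of $\Qe$ and Proposition~\ref{proposition:pointwise lower semicontinuity extended space}. Now apply Lemma~\ref{lemma:q-wekly convergent sequences} to $\Qe$, which is pointwise lower semicontinuous by that same proposition: the sequence $(f^{(n)})$ is $\Qe$-bounded and converges pointwise to $f$, hence $f^{(n)}\to f$ $\Qe$-weakly, so in particular $\Qe(f,f^{(n)})\to\Qe(f)$. Then
$$\Qe(f-f^{(n)})=\Qe(f)-2\Qe(f,f^{(n)})+\Qe(f^{(n)})\longrightarrow 0.$$
This argument also subsumes your separate treatment of $(Q,D(Q))$, since $\Qe$ extends $Q$ and the bound $|f^{(n)}|\leq |f|$ keeps the truncations in $\ltwo$.
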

\begin{proof}
By straightforward computations the assertions for $\ow{D}$ and $\ow{Q}$ follow. For the corresponding statements for Dirichlet forms see  \cite[Theorem~1.4.2, Corollary~1.5.1]{FOT}.
\end{proof}

\subsection{Recurrence and transience}
In this section we discuss transient graphs and list some basic -- but important -- facts. For our purposes it is convenient to work with the following definition of recurrence and transience. For a detailed discussion on how this is related to other (analytic and probabilistic) notions of recurrence and transience we refer the reader to \cite{Sch17}.

A weighted graph $(b,c)$ is called {\em recurrent} if $1\in\Dzero$ and the equality $\widetilde{Q}(1)=0$ holds. It
is called {\em transient} if for all $f \in \Dzero$ the equality $\ow{Q}(f) = 0$ implies $f = 0$. A characterization of transience can be found in Appendix \ref{section:Char_Trans}.

\begin{remarks}
 Clearly,  every recurrent graph satisfies $c\equiv 0$. Moreover, the connectedness  of $(b,c)$ implies the dichotomy of recurrence and transience.
\end{remarks}

The next lemma shows that $\Dzero$ and the space of energy finite harmonic functions are orthogonal with respect to $\widetilde{Q}$. It follows directly from
the Green's formula in Proposition~\ref{Green}, see also \cite[Lemma~3.66]{soardi}.
\begin{lemma}\label{orthharm}
Let $(b,c)$ be a graph over $X$ and let $h \in \ow{D}$.  The following assertions are equivalent.
\begin{itemize}
\item[(i)] $h$ is harmonic.
 \item [(ii)] $\widetilde{Q}(h,f)=0$ for all $f\in\Dzero$.
\item[(iii)] $\widetilde{Q}(h,f)=0$ for all $f \in C_c(X)$.
\end{itemize}
\end{lemma}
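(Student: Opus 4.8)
The plan is to prove the equivalence of the three statements by establishing the chain $(i) \Rightarrow (ii) \Rightarrow (iii) \Rightarrow (i)$, using Green's formula (Proposition~\ref{Green}) as the central tool. The harmonic condition $\LapUnw h \equiv 0$ should connect to the energy pairing $\ow{Q}(h,f)$ precisely through this formula, so the main task is to verify that the hypotheses of Green's formula are met and to handle the summability condition carefully.

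For $(i) \Rightarrow (ii)$, suppose $h$ is harmonic, so $\LapUnw h(x) = 0$ for all $x \in X$. Given $f \in \Dzero$, I would like to apply the first identity in Proposition~\ref{Green}, namely $\ow{Q}(f,h) = \sum_{x \in X} f(x) \LapUnw h(x)$. Since $h \in \ow{D} \subseteq \ow{F}$ and $f \in \Dzero$, the only hypothesis requiring attention is the absolute summability $\sum_{x \in X} |f(x) \LapUnw h(x)| < \infty$. But this is immediate here: because $\LapUnw h \equiv 0$, the sum is identically zero, so the summability condition holds trivially and Green's formula gives $\ow{Q}(h,f) = \ow{Q}(f,h) = \sum_x f(x) \cdot 0 = 0$. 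This is the cleanest direction.

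The implication $(ii) \Rightarrow (iii)$ is immediate since $C_c(X) \subseteq \Dzero$ by the very definition of $\Dzero$ as the $\|\cdot\|_o$-closure of $C_c(X)$, and any $f \in C_c(X)$ certainly lies in $\ow{D}$. For the reverse direction $(iii) \Rightarrow (i)$, I would use the second identity in Green's formula, which for $f \in C_c(X)$ and $g = h \in \ow{D}$ reads $\ow{Q}(f,h) = \sum_{x \in X} \LapUnw f(x)\, h(x)$; but more directly, applying the first identity (valid since $C_c(X) \subseteq \Dzero$ and the sum $\sum_x |f(x) \LapUnw h(x)|$ is finite because $f$ has finite support) yields $0 = \ow{Q}(h,f) = \sum_{x \in X} f(x) \LapUnw h(x)$ for every $f \in C_c(X)$. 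Testing against the indicator functions $f = \1_{\{x_0\}}$ for each fixed $x_0 \in X$ then forces $\LapUnw h(x_0) = 0$, so $h$ is harmonic.

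The step I expect to require the most care is confirming that $h \in \ow{D}$ indeed implies $\LapUnw h$ is well-defined, i.e. $h \in \ow{F}$, so that the expression $\LapUnw h(x)$ makes sense pointwise; this is exactly the inclusion $\ow{D} \subseteq \ow{F}$ recorded in Proposition~\ref{Green}. With that in hand, no genuine obstacle remains, since in both applications of Green's formula the summability hypothesis is satisfied for trivial reasons (vanishing Laplacian in one case, finite support in the other). The proof is therefore essentially a direct bookkeeping of Green's formula together with the density of $C_c(X)$ in $\Dzero$, and I would keep it short.
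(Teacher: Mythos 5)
Your proof is correct and matches the paper's approach exactly: the paper derives Lemma~\ref{orthharm} directly from Green's formula (Proposition~\ref{Green}), which is precisely what you do, including the key observations that the summability hypothesis holds trivially (vanishing Laplacian in one direction, finite support in the other) and that testing against indicators $\1_{\{x_0\}}$ recovers harmonicity. The paper omits these details and simply cites Green's formula, so your write-up is a faithful filling-in of that argument.
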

The following decomposition of functions of finite energy is one of the main tools in this paper. For graphs without killing it is the content of \cite[Theorem 3.69]{soardi} and for graphs with killing we refer to \cite[Proposition~5.1]{uniform}.
\begin{theorem}[Royden decomposition]\label{theorem:royden decomposition}
 Let $(b,c)$ be a transient graph. For all $f\in\Dgen$ there exists a unique $f_0\in\Dzero$ and a unique harmonic $f_h\in\Dgen$ such that
 $$f=f_0+f_h$$
 $$\Qgen(f)=\Qgen(f_0)+\Qgen(f_h).$$
  Moreover, if $f$ is bounded, then $f_0$ and $f_h$ are bounded as well.
\end{theorem}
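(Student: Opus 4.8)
The plan is to realize the decomposition as an orthogonal projection with respect to the energy form $\Qgen$. The key enabling fact is that transience turns $\Qgen$ into a genuine \emph{complete} inner product on $\Dzero$: by the characterization of transience in Appendix~\ref{section:Char_Trans} the point evaluation $g \mapsto g(o)$ is bounded on $(\Dzero,\Qgen)$, so $\Qgen(\cdot)^{1/2}$ and $\|\cdot\|_o$ are equivalent norms on $\Dzero$, and hence $(\Dzero,\Qgen)$ is a Hilbert space by Proposition~\ref{lemma:properties of dtilde}.

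Given $f \in \Dgen$, I would first produce $f_0$. Since $|\Qgen(f,g)| \le \Qgen(f)^{1/2}\Qgen(g)^{1/2}$, the map $g \mapsto \Qgen(f,g)$ is a bounded linear functional on the Hilbert space $(\Dzero,\Qgen)$, so by the Riesz representation theorem there is a unique $f_0 \in \Dzero$ with $\Qgen(f_0,g) = \Qgen(f,g)$ for all $g \in \Dzero$. Set $f_h := f - f_0 \in \Dgen$. Then $\Qgen(f_h,g) = 0$ for every $g \in \Dzero$, so $f_h$ is harmonic by Lemma~\ref{orthharm}. Orthogonality immediately yields the energy identity $\Qgen(f) = \Qgen(f_0) + 2\Qgen(f_0,f_h) + \Qgen(f_h) = \Qgen(f_0) + \Qgen(f_h)$. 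For uniqueness, if $f = f_0' + f_h'$ is another such splitting then $f_0 - f_0' = f_h' - f_h \in \Dzero$ is harmonic, so applying Lemma~\ref{orthharm} with test function $f_0 - f_0'$ gives $\Qgen(f_0 - f_0') = 0$, whence $f_0 = f_0'$ by transience and then $f_h = f_h'$.

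The remaining, and most substantial, point is boundedness. Here I would exhaust $X$ by finite sets $X_1 \subseteq X_2 \subseteq \cdots$ with $\bigcup_n X_n = X$ and, for each $n$, let $f_0^{(n)}$ be the $\Qgen$-orthogonal projection of $f$ onto the finite-dimensional space of functions supported in $X_n$; put $f_h^{(n)} := f - f_0^{(n)}$. Testing orthogonality against $\delta_x$ and invoking Green's formula (Proposition~\ref{Green}) gives $\LapUnw f_h^{(n)}(x) = 0$ for all $x \in X_n$, while $f_h^{(n)} = f$ off $X_n$. A discrete maximum-principle argument on the finite region $X_n$ (using $c \ge 0$ and connectedness) then forces $|f_h^{(n)}| \le \|f\|_\infty$ everywhere. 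Since $\bigcup_n \{g : \supp g \subseteq X_n\} = C_c(X)$ is dense in $\Dzero$, the projections converge, so $f_0^{(n)} \to f_0$ in $\Qgen$, hence in $\|\cdot\|_o$, and therefore pointwise by Proposition~\ref{lemma:properties of dtilde}; passing to the pointwise limit yields $|f_h| \le \|f\|_\infty$, so $f_0 = f - f_h$ is bounded as well.

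I expect the main obstacle to be exactly this last step: setting up the finite Dirichlet problems so that the maximum principle applies cleanly (handling the killing term and propagating from ``harmonic on $X_n$'' to a genuine pointwise bound via connectedness), and then transferring the uniform bound through the limit. By contrast, the existence, uniqueness, and energy identity are routine once transience has been used to secure completeness of $(\Dzero,\Qgen)$, so that part reduces to Hilbert-space projection combined with the orthogonality characterization of harmonicity in Lemma~\ref{orthharm}.
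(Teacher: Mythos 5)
Your proof is correct, and it is essentially the classical argument: note that the paper itself does not prove Theorem~\ref{theorem:royden decomposition} but defers to \cite[Theorem~3.69]{soardi} (for $c\equiv 0$) and \cite[Proposition~5.1]{uniform} (for general $c$), and your route --- transience makes $(\Dzero,\Qgen)$ a Hilbert space, Riesz representation produces $f_0$, Lemma~\ref{orthharm} identifies $f_h=f-f_0$ as harmonic and yields both the energy identity and uniqueness, and boundedness comes from exhaustion by finite sets plus a discrete maximum principle --- is exactly the strategy of those references, so you have in effect made the paper's citation self-contained. Two steps that you leave implicit do check out but deserve to be written down. First, in the convergence step what you need is that $f_0^{(n)}$ equals the $\Qgen$-orthogonal projection of $f_0$ (not merely of $f$) onto $V_n:=\{g:\supp g\subseteq X_n\}$; this holds because $\Qgen(f_h,g)=0$ for all $g\in V_n\subseteq\Dzero$, so the defining relations $\Qgen(f-f_0^{(n)},g)=0$ and $\Qgen(f_0-f_0^{(n)},g)=0$ on $V_n$ coincide, and only then does the standard fact about projections onto an increasing sequence of closed subspaces with dense union give $\Qgen(f_0^{(n)}-f_0)\to 0$. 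Second, the maximum principle needs the propagation spelled out: if $f_h^{(n)}$ exceeded $M:=\|f\|_\infty$ at some point of $X_n$, that point would be a global maximum with value $>M\geq 0$; then $\LapUnw f_h^{(n)}=0$ there together with $c\geq 0$ forces $c$ to vanish at that point and $f_h^{(n)}$ to take the same value at all neighbours, and since $X$ is connected and infinite while $X_n$ is finite, a path leaving $X_n$ carries this value to a vertex where $f_h^{(n)}=f\leq M$, a contradiction (the lower bound is symmetric). With these two observations recorded, your argument is a complete and correct proof.
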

%
%
For a function $f \in \ow{D}$ we call the pair $(f_0,f_h)$ of the previous theorem its {\em Royden decomposition}. In particular, we use the subscripts $0$ and $h$ on functions for the corresponding parts in the Royden decomposition. For later purposes we note the following continuity property of the Royden decomposition.

\begin{lemma} \label{lemma:poinwise continuity royden decomposition}
Let $(b,c)$ be a transient graph and let $o \in X$. Let $(f_n)$ be a sequence in $\ow{D}$ that converges to some $f \in \ow{D}$ with respect to $\|\cdot\|_o$. Then $(f_n)_0 \to f_0$ and $(f_n)_h \to f_h$ pointwise, as $n \to \infty.$
\end{lemma}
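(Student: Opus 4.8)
The plan is to first establish the convergence of the $\Dzero$-parts, where transience provides control, and then to obtain the harmonic parts by subtraction. To begin, I would extract from the hypothesis $\|f_n - f\|_o \to 0$ the two facts furnished by Proposition~\ref{lemma:properties of dtilde}: the pointwise convergence $f_n \to f$, and, since $\|g\|_o^2 = \ow{Q}(g) + g(o)^2$, the energy convergence $\ow{Q}(f_n - f) \leq \|f_n - f\|_o^2 \to 0$.

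The next step is to note that the Royden decomposition of Theorem~\ref{theorem:royden decomposition} is linear. If $f = f_0 + f_h$ and $g = g_0 + g_h$ are the decompositions of $f, g \in \ow{D}$, then $f + g = (f_0 + g_0) + (f_h + g_h)$, where $f_0 + g_0 \in \Dzero$ and $f_h + g_h$ is harmonic of finite energy; the uniqueness part of the theorem identifies this as the decomposition of $f + g$, and the same reasoning applies to scalar multiples. Applying linearity to $f_n - f$ gives $(f_n)_0 - f_0 = (f_n - f)_0$, and the Pythagorean identity $\ow{Q}(g) = \ow{Q}(g_0) + \ow{Q}(g_h) \geq \ow{Q}(g_0)$ (valid for every $g \in \ow{D}$ by the theorem) yields
$$\ow{Q}\big((f_n)_0 - f_0\big) = \ow{Q}\big((f_n - f)_0\big) \leq \ow{Q}(f_n - f) \longrightarrow 0.$$

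It remains to convert this energy convergence into pointwise convergence, and here the transience hypothesis is essential: the difference $(f_n)_0 - f_0$ lies in $\Dzero$, and on a transient graph the characterization in Appendix~\ref{section:Char_Trans} guarantees that point evaluations are continuous with respect to $\ow{Q}^{1/2}$ on $\Dzero$ (equivalently, that $\ow{Q}^{1/2}$-null sequences in $\Dzero$ converge pointwise to $0$). Hence $(f_n)_0 \to f_0$ pointwise. This is the only place where transience is used and is the main obstacle: on all of $\ow{D}$ energy convergence cannot detect pointwise behaviour, since nonzero harmonic functions of zero energy (such as constants when $c \equiv 0$) are invisible to $\ow{Q}$, so the argument genuinely requires splitting off the $\Dzero$-part first. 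Finally, writing $(f_n)_h = f_n - (f_n)_0$ and $f_h = f - f_0$, the pointwise convergences $f_n \to f$ and $(f_n)_0 \to f_0$ combine to give $(f_n)_h \to f_h$ pointwise, which completes the proof.
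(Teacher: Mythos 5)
Your proof is correct and follows essentially the same route as the paper's (much terser) argument: both reduce to showing $(f_n)_0 \to f_0$ pointwise via the transience characterization of Theorem~\ref{CharTrans}, and recover the harmonic parts by subtraction using the pointwise convergence $f_n \to f$ from Proposition~\ref{lemma:properties of dtilde}. The details you supply — linearity of the Royden decomposition via uniqueness, the Pythagorean bound $\ow{Q}\big((f_n-f)_0\big) \leq \ow{Q}(f_n-f)$, and continuity of point evaluations on $\Dzero$ with respect to $\ow{Q}^{1/2}$ — are exactly what the paper leaves implicit.
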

\begin{proof}
Convergence with respect to $\|\cdot\|_o$ implies pointwise convergence, see Proposition~\ref{lemma:properties of dtilde}, and so it suffices to prove $(f_n)_0 \to f_0$ pointwise. This however is a consequence of the characterization of transience, see Theorem~\ref{CharTrans}.
\end{proof}


\subsection{The Royden boundary}\label{2.6}
Next we recall  the  Royden compactification and the Royden boundary of a weighted graph. For more details see e.g. \cite{soardi} and the extended\footnote{arXiv:1309.3501} version of \cite{canon}.

 Let $(b,c)$ be a graph over the set $X$. The uniform closure of $\BFEspace$ in $\linf$ is denoted by $$\A:=\overline{\ow{D}\cap\ell^\infty(X)}^{\|\cdot\|_\infty}$$ and
 is called {\em Royden algebra.} The algebra generated by $\A$ and the constant function $1$ is denoted by $\A^+$.

By applying Gelfand theory  to (the complexification of) $\A^+$ we infer the existence of a unique (up to homeomorphism) separable, compact Hausdorff space $R$
 such that $\A^+$ is isomorphic to $C(R)$, the algebra of real-valued continuous functions on $R$. Using standard arguments, it follows  that the following conditions are satisfied:
\begin{itemize}
 \item $X$ can be embedded into $R$ as a dense open subset,
 \item every function in $\BFEspace$ can be uniquely extended to a continuous function on $R$,
 \item the algebra $\BFEspace$ separates the points of $R$.
\end{itemize}
The set $R$ is called the {\em Royden compactification} of the graph $(b,c)$ over $X$.

 In what follows we tacitly identify functions in $\A^+$ with continuous function on $R$.

 Let $(b,c)$ be a graph over the set $X$ and let $R$ be its Royden compactification. The set $$\partial X:=R\setminus X$$ is called the {\em Royden boundary} of $(b,c)$ and
 $$\partial_h X:=\{z \in\partial X\, : \,  f(z)=0 \text{ for all } f\in \Dzero\cap \linf\}$$
is called the {\em harmonic boundary} of $(b,c)$.

%
%
%
Since $X$ is open in $R$, the Royden boundary $\partial X = R \setminus X$ is compact. As an intersection of closed sets in the compact set $\partial X$, the harmonic boundary is  compact as well.
%
%
%
\begin{remarks}
 \begin{itemize}
 \item[(a)] According to \cite[Proposition~4.9]{canon},  $1\in\A$  holds if and only if $\sum_{x\in X} c(x)<\infty$. In this case, $\A=\A^+$.
 \item[(b)] The Royden boundary of an infinite graph is always nonempty. For the harmonic boundary, however, this is not always the case. In \cite[Proposition~5.4]{uniform} it is proven that
 $\partial_h X=\emptyset$ holds if and only if $1\in\Dzero$, which is equivalent to $\sum_{x\in X} c(x)<\infty$ and $(b,0)$ being recurrent.
 \item[(c)] Even if $(b,0)$ is transient, it may happen that $\partial_h X \neq \partial X$.  Graphs for which $\partial_h X = \partial X$ holds are called uniformly transient, see \cite{uniform}.
 \end{itemize}
 \end{remarks}

The importance of the harmonic boundary stems from the following maximum principle for harmonic functions in $\BFEspace$, see e.g. \cite[Corollary~5.3]{uniform}.
\begin{proposition}[Maximum principle]\label{maxprin}
  Let  $(b,c)$ be a transient graph over $X$ with $\partial_h X\not=\emptyset$ and let $h\in\BFEspace$ be harmonic.
 Then,
 $$\sup_{x \in X} |h(x)| = \sup_{z \in \partial_h X} | h (z)|.$$

\end{proposition}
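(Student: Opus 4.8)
The statement is an equality of two suprema, so I would prove the two inequalities separately. The inequality $\sup_{z\in\partial_h X}|h(z)|\leq\sup_{x\in X}|h(x)|$ is immediate: since $h\in\BFEspace$ extends to a continuous function on the Royden compactification $R$ and $X$ is dense in $R$, continuity gives $\sup_{z\in\partial_h X}|h(z)|\leq\sup_{z\in R}|h(z)|=\sup_{x\in X}|h(x)|$. The content of the proposition is therefore the reverse inequality, and I would obtain it by proving the pointwise bound $|h(x)|\leq M$ for all $x\in X$, where $M:=\sup_{z\in\partial_h X}|h(z)|\geq 0$. Since $-h$ is again harmonic and bounded with the same boundary supremum, it suffices to establish $h(x)\leq M$ for all $x\in X$.

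To this end I would set $g:=(h-M)\vee 0=h-(h\wedge M)$. Because $M\geq 0$, the map $t\mapsto t\wedge M$ is a normal contraction, so $h\wedge M\in\ow{D}$ by the Markov property of $\ow{Q}$, and hence $g\in\ow{D}\cap\ell^\infty(X)$ with $g\geq 0$. On $R$ the continuous extension of $g$ equals $(h-M)\vee 0$, which vanishes on $\partial_h X$ because $h\leq M$ there. The crucial step, which I expect to be the main obstacle, is to promote this vanishing on the harmonic boundary to membership $g\in\Dzero$: one needs the converse to the defining property of $\partial_h X$, namely that a bounded function of finite energy whose boundary extension vanishes on $\partial_h X$ necessarily lies in $\Dzero$. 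This is the genuinely nontrivial input; it rests on the ideal structure of the Royden algebra and the description of $\Dzero\cap\ell^\infty(X)$ as exactly the bounded finite-energy functions vanishing on $\partial_h X$, and I would either cite it or establish it by a cut-off/partition-of-unity argument on $R$.

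Granting $g\in\Dzero$, the conclusion follows from orthogonality and transience. By Lemma~\ref{orthharm} the harmonicity of $h$ gives $\ow{Q}(h,g)=0$. On the other hand, a short case analysis according to the signs of $h(x)-M$ and $h(y)-M$ yields the elementary pointwise inequalities
\begin{align*}
(h(x)-h(y))(g(x)-g(y)) &\geq (g(x)-g(y))^2,\\
c(x)\,h(x)\,g(x) &\geq c(x)\,g(x)^2,
\end{align*}
the second because $g(x)>0$ forces $h(x)=M+g(x)\geq g(x)\geq 0$. Summing these over edges and vertices gives $\ow{Q}(h,g)\geq\ow{Q}(g)\geq 0$, so that $\ow{Q}(g)=0$. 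Since the graph is transient and $g\in\Dzero$, this forces $g=0$, i.e.\ $h\leq M$ on $X$. Applying the same argument to $-h$ yields $-h\leq M$, hence $|h|\leq M$ pointwise, which is precisely the inequality $\sup_{x\in X}|h(x)|\leq M$. Combining the two inequalities gives the asserted equality.
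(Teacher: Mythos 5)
Your overall architecture is sound and the computational parts are correct: the easy inequality via density of $X$ in $R$, the observation that $t \mapsto t \wedge M$ is a normal contraction for $M \geq 0$ (so that $g = h - (h\wedge M) \in \ow{D}\cap\ell^\infty(X)$), the two pointwise inequalities, and the endgame $0 = \ow{Q}(h,g) \geq \ow{Q}(g) \geq 0$ (via Lemma~\ref{orthharm}) followed by transience are all fine. The problem is exactly the step you yourself flag as crucial: that $g$ vanishing on $\partial_h X$ implies $g \in \Dzero$. Inside this paper that statement is precisely Corollary~\ref{corollary:d0 as kernel of gamma0}, and the paper proves that corollary \emph{from} Proposition~\ref{maxprin}: one writes $f = f_0 + f_h$ by the Royden decomposition, notes that $f_h$ vanishes on $\partial_h X$, and invokes the maximum principle to conclude $f_h = 0$. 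So citing the corollary makes your argument circular; indeed, given Theorem~\ref{theorem:royden decomposition} and Lemma~\ref{orthharm}, the corollary and the maximum principle are essentially equivalent (your argument is one implication, the paper's proof of the corollary is the other), so what you have done is reduce the proposition to an equivalent unproven statement.

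The fallback you offer -- ``a cut-off/partition-of-unity argument on $R$'' -- is indeed how a non-circular proof goes, but it is the actual content of the result, and your specific choice of $g$ makes it harder than necessary: $g = (h-M)\vee 0$ vanishes \emph{on} $\partial_h X$ but in general not on a \emph{neighborhood} of it, which is what a covering argument needs. The standard route (essentially what the reference the paper cites, \cite[Corollary~5.3]{uniform}, and Soardi's book carry out) works with the shifted cut-offs $g_\epsilon := (h - M - \epsilon)\vee 0$, whose support closure $\{\tilde h \geq M + \epsilon\}$ in $R$ is compact and disjoint from $\partial_h X$; one then needs (i) a covering lemma producing, for each such compact set $K$, a function $w \in \Dzero \cap \ell^\infty(X)$, $w \geq 0$, with $w \geq 1$ on $K$ (possible because, by the definition of $\partial_h X$, every point of $K$ admits a function in $\Dzero\cap\ell^\infty(X)$ not vanishing there, plus compactness and the stability of $\Dzero$ under lattice operations); (ii) a domination lemma saying that $u \in \ow{D}$ with $0 \leq u \leq v$ and $v \in \Dzero$ forces $u \in \Dzero$ (approximate $v$ by $\phi_n \in C_c(X)$, use $u \wedge \phi_n^+ \in C_c(X)$, Lemma~\ref{lemma:q-wekly convergent sequences} and Banach--Saks); and (iii) a limiting argument $\epsilon \to 0$ using the closedness of $\Dzero$ and lower semicontinuity of $\ow{Q}$. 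None of this appears in your proposal, so there is a genuine gap precisely at the step that carries all the difficulty. (For comparison: the paper itself does not prove this proposition but cites \cite[Corollary~5.3]{uniform}, whose proof runs through exactly the machinery you deferred.)
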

One important consequence of the maximum principle is that functions in $\Dzero \cap \ell^\infty(X)$ are exactly those that vanish on $\partial_h X$. This observation is an extension of \cite[Corollary~6.8]{soardi} to graphs which possibly satisfy $c \neq 0$.
\begin{corollary} \label{corollary:d0 as kernel of gamma0}
 Let  $(b,c)$ be a transient graph over $X$ such that $\partial_h X\not=\emptyset$. Then
 $$\Dzero \cap \ell^\infty(X) = \{f \in \ow{D} \cap \ell^\infty(X) \,:\, f(z) = 0 \text{ for all } z \in \partial_h X\}.$$
\end{corollary}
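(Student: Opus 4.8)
The plan is to prove the two inclusions separately, using the harmonic boundary's defining property for one direction and the maximum principle (Proposition~\ref{maxprin}) together with the Royden decomposition (Theorem~\ref{theorem:royden decomposition}) for the other.

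The inclusion ``$\subseteq$'' is essentially immediate. If $f \in \Dzero \cap \ell^\infty(X)$, then by the very definition of the harmonic boundary $\partial_h X = \{z \in \partial X : g(z) = 0 \text{ for all } g \in \Dzero \cap \linf\}$ we have $f(z) = 0$ for every $z \in \partial_h X$. Since trivially $f \in \ow{D} \cap \ell^\infty(X)$, this places $f$ in the right-hand side. No real work is needed here beyond unwinding the definitions.

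The reverse inclusion ``$\supseteq$'' is the substantive part. Let $f \in \ow{D} \cap \ell^\infty(X)$ with $f(z) = 0$ for all $z \in \partial_h X$; I want to show $f \in \Dzero$. First I would invoke the Royden decomposition: since $(b,c)$ is transient and $f$ is bounded, Theorem~\ref{theorem:royden decomposition} gives $f = f_0 + f_h$ with $f_0 \in \Dzero$, $f_h \in \ow{D}$ harmonic, and both parts bounded. It now suffices to show $f_h = 0$. The key observation is that, restricted to $\partial_h X$, the boundary values of $f$ and $f_h$ agree: by the forward inclusion just established, $f_0 \in \Dzero \cap \linf$ vanishes on $\partial_h X$, so the continuous extension of $f_h = f - f_0$ to $R$ equals that of $f$ on $\partial_h X$, and hence $f_h(z) = 0$ for all $z \in \partial_h X$. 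Now $f_h$ is a bounded harmonic function in $\BFEspace$, so the maximum principle of Proposition~\ref{maxprin} applies and yields
$$\sup_{x \in X} |f_h(x)| = \sup_{z \in \partial_h X} |f_h(z)| = 0,$$
which forces $f_h \equiv 0$ on $X$. Therefore $f = f_0 \in \Dzero$, completing the reverse inclusion.

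The step I expect to be the main (though modest) obstacle is the bookkeeping around the continuous extensions to $R$: one must be careful that $f$, $f_0$, and $f_h$ all lie in $\BFEspace$ so that their continuous extensions to the Royden compactification exist and that the algebraic relation $f_h = f - f_0$ persists after extension. This is guaranteed because $\BFEspace$ is an algebra (Proposition~\ref{proposition:algebraic and order properties of domains}) embedding into $C(R)$, so differences of its elements extend continuously and the extension map is linear; hence evaluating $f - f_0$ at $z \in \partial_h X$ genuinely gives $f(z) - f_0(z) = 0$. Once this identification is clean, the maximum principle does the real work and the conclusion follows immediately.
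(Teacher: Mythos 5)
Your proposal is correct and follows essentially the same route as the paper's proof: the forward inclusion from the definition of $\partial_h X$, and the reverse inclusion via the Royden decomposition $f = f_0 + f_h$, observing that $f_h$ vanishes on $\partial_h X$ and then applying the maximum principle to conclude $f_h = 0$. Your extra care about the continuous extensions to $R$ being linear is a reasonable clarification but does not alter the argument.
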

\begin{proof}
The definition of $\partial_h X$ implies
$$\Dzero \cap \ell^\infty(X) \subseteq \{f \in \ow{D} \cap \ell^\infty(X) \,:\, f(z) = 0 \text{ for all } z \in \partial_h X\}.$$
For proving the opposite inclusion  we let $f \in \ow{D} \cap \ell^\infty(X)$ with $ f(z) = 0 \text{ for all } z \in \partial_h X$. The Royden decomposition theorem, Theorem~\ref{theorem:royden decomposition}, yields  $f = f_0  + f_h$ with $f_0 \in \Dzero \cap \ell^\infty(X)$ and harmonic  $f_h \in \ow{D} \cap \ell^\infty(X)$. Since functions in $\Dzero \cap \ell^\infty(X)$ vanish on the harmonic boundary, we obtain $f_h(z) = f(z) = 0$ for all $z \in \partial_h X$. By the maximum principle this implies $f_h = 0$ and so   $f = f_0 \in \Dzero \cap \ell^\infty(X)$.
\end{proof}

As subset of $R$ the harmonic boundary $\partial_h X$ is a topological space and we denote by $C(\partial_h X)$  the continuous real-valued functions on $\partial_h X$. We define the {\em preliminary trace map}
$$\gamma_0 :\mathcal{A} \to C(\partial_h X),\, f \mapsto f|_{\partial_h X}.$$
In the next section we introduce certain measures $\mu$ on $\partial_h X$ and (partially) extend $\gamma_0$ to a map $\Tr : \ow{D} \to L^2(\partial_h X,\mu)$. The image of $\gamma_0$ can be expressed in the following way, see \cite[Lemma~4.8]{uniform}.
\begin{lemma}\label{Adarst}
 Let $(b,c)$ be a transient graph over $X$ with $\partial_h X\not=\emptyset$.
 \begin{itemize}
  \item [(a)] If $1\in\A$, then $\gamma_0 \mathcal{A} =C(\partial_h X)$.
  \item [(b)] If $1\not\in\A$, then there exists a point $p_\infty \in \partial_h X$ such that
  $$\gamma_0 \mathcal{A}=\{f\in C(\partial_h X): f(p_\infty)=0\}.$$
 \end{itemize}
 \end{lemma}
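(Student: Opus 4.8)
The plan is to first pin down $\A$ completely as a subalgebra of $C(R)$ and only afterwards restrict to $\partial_h X$; this avoids having to argue directly that $\gamma_0\A$ is closed in $C(\partial_h X)$, which is not obvious. I identify $\A^+$ with $C(R)$ via Gelfand theory. Since $X$ is dense in $R$, the uniform norm on $\linf$ coincides with the sup-norm on $R$ for functions extending continuously, so $\A=\overline{\BFEspace}^{\|\cdot\|_\infty}$ is a \emph{closed} subalgebra of $C(R)$, and it separates the points of $R$ because it contains the point-separating algebra $\BFEspace$. I would use two standard facts about the compact Hausdorff space $R$: Tietze's extension theorem, which makes the restriction map $C(R)\to C(\partial_h X)$ onto the closed set $\partial_h X$ surjective; and the real non-unital Stone--Weierstrass theorem, which says that a closed point-separating subalgebra $B\subseteq C(R)$ either vanishes nowhere, whence $B=C(R)$, or vanishes at a single point $p$ (uniqueness being forced by point separation), whence $B=\{f\in C(R):f(p)=0\}$.

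Part (a) is then immediate. If $1\in\A$, then $1\in\A$ entails $\A=\A^+=C(R)$, so $\gamma_0\A$ equals the full restriction $C(R)|_{\partial_h X}$, which is all of $C(\partial_h X)$ by Tietze.

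For part (b) I would apply Stone--Weierstrass to $\A$. Since $1\notin\A$ we have $\A\neq C(R)$, so $\A$ vanishes at exactly one point $p\in R$, i.e. $\A=\{f\in C(R):f(p)=0\}$. It then remains to locate $p$ inside $\partial_h X$ and to transport this description through $\gamma_0$. Because $C_c(X)\subseteq\Dzero\cap\linf\subseteq\A$ and functions in $C_c(X)$ take prescribed nonzero values at prescribed vertices, no vertex can be a common zero of $\A$; hence $p\notin X$, i.e. $p\in\partial X$. Moreover every $f\in\Dzero\cap\linf\subseteq\A$ vanishes at $p$, which is precisely the defining property of the harmonic boundary, so $p=:p_\infty\in\partial_h X$. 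Finally, since $p_\infty\in\partial_h X$, every $g=\gamma_0 f$ with $f\in\A$ satisfies $g(p_\infty)=0$; conversely, any $g\in C(\partial_h X)$ with $g(p_\infty)=0$ extends by Tietze to some $\tilde g\in C(R)$ with $\tilde g(p_\infty)=0$, so $\tilde g\in\A$ and $g=\gamma_0\tilde g$. This gives $\gamma_0\A=\{g\in C(\partial_h X):g(p_\infty)=0\}$.

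The hard part is really the single application of the non-unital Stone--Weierstrass theorem in part (b): the crux is to verify its hypotheses for $\A$ on $R$ (closedness and point separation) and to note that point separation forces the common-zero set of $\A$ to consist of at most one point. Once $\A$ is identified as a maximal ideal of $C(R)$, everything else---the two Tietze extensions and the identification $p\in\partial_h X$---is routine bookkeeping built directly on the definition of $\partial_h X$ and the inclusion $\Dzero\cap\linf\subseteq\A$.
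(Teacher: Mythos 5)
Your proof is correct, but note that there is nothing in the paper to compare it against: the paper does not prove Lemma~\ref{Adarst} at all, it simply quotes the statement from \cite[Lemma~4.8]{uniform}. So your argument is a genuine, self-contained replacement for an external citation. The route you take is the natural one, and every step checks out: $\A$ is a \emph{closed} subalgebra of $C(R)$ because $X$ is dense in $R$ (so the sup-norm over $X$ and over $R$ agree on continuously extendable functions); it separates the points of $R$ because it contains $\BFEspace$, which the paper's construction of $R$ guarantees to be point-separating; the dichotomy of the real non-unital Stone--Weierstrass theorem is resolved correctly ($1\notin\A$ rules out $\A=C(R)$, and point separation forces the common zero set to be a single point $p$, whence $\A=\{f\in C(R):f(p)=0\}$); the localization $p\in\partial_h X$ is handled cleanly, using $C_c(X)\subseteq\A$ to exclude $p\in X$ and $\Dzero\cap\linf\subseteq\A$ to verify the defining condition of the harmonic boundary; and the two Tietze extensions are legitimate since $\partial_h X$ is closed in the compact Hausdorff (hence normal) space $R$, a fact the paper records when noting that $\partial_h X$ is compact. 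In particular, your observation that any Tietze extension of a function vanishing at $p_\infty\in\partial_h X$ automatically vanishes at $p_\infty$, and hence lies in $\A$, closes the surjectivity argument in part (b) without any additional correction step.
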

 The existence result in the next theorem is taken from \cite[Theorem~5.5]{uniform}. The uniqueness for functions in $\gamma_0(\ow{D}\cap\ell^\infty(X))$ follows from the
 maximum principle.
\begin{theorem}[Dirichlet problem] \label{Dirprob}
 Let $(b,c)$ be a transient graph over $X$ with $\partial_h X\not=\emptyset$. For every $\varphi\in \gamma_0 \mathcal{A}$ the equation
 $$\begin{cases}\mathcal{L} h =0 \\ \gamma_0 h=\varphi \end{cases}$$
 has a  solution $h_\varphi\in\A$ and has at most one solution in $\BFEspace$. In particular, for all $\varphi \in \gamma_0(\ow{D} \cap \ell^\infty(X))$ the above equation has a unique solution in $\ow{D} \cap \ell^\infty(X)$.
%
%
\end{theorem}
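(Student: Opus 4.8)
The plan is to treat existence as imported and concentrate the argument on uniqueness and on the sharper ``in particular'' claim, since those are the parts that follow from the tools already assembled. For existence I will simply invoke \cite[Theorem~5.5]{uniform}, which for any $\varphi \in \gamma_0 \mathcal{A}$ produces a function $h_\varphi \in \mathcal{A}$ solving $\mathcal{L} h = 0$ and $\gamma_0 h = \varphi$; no additional work is needed there.

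For uniqueness in $\ow{D} \cap \ell^\infty(X)$ the key point is that the difference of two solutions is a bounded harmonic function of finite energy with vanishing trace, and the maximum principle forces such a function to be zero. Concretely, if $h_1, h_2 \in \ow{D} \cap \ell^\infty(X)$ both solve the problem, set $h := h_1 - h_2$. Since $\mathcal{L}$ is linear, $\mathcal{L} h = 0$, so $h$ is harmonic; moreover $h \in \ow{D} \cap \ell^\infty(X)$, and by linearity of $\gamma_0$ we get $\gamma_0 h = \gamma_0 h_1 - \gamma_0 h_2 = \varphi - \varphi = 0$. Applying the maximum principle, Proposition~\ref{maxprin}, to the bounded harmonic function $h$ gives
$$\sup_{x \in X} |h(x)| = \sup_{z \in \partial_h X} |h(z)| = 0,$$
whence $h \equiv 0$ and $h_1 = h_2$. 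Note that this argument is valid for every $\varphi \in \gamma_0 \mathcal{A}$, which is exactly the generality that the ``at most one solution'' claim requires.

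For the ``in particular'' part I will exhibit the solution explicitly via the Royden decomposition rather than through the abstract existence result, because this localizes it in the smaller space $\ow{D} \cap \ell^\infty(X)$. Given $\varphi \in \gamma_0(\ow{D} \cap \ell^\infty(X))$, pick $f \in \ow{D} \cap \ell^\infty(X)$ with $\gamma_0 f = \varphi$ and write its Royden decomposition $f = f_0 + f_h$ from Theorem~\ref{theorem:royden decomposition}, where $f_0 \in \Dzero \cap \ell^\infty(X)$ and $f_h$ is bounded and harmonic. Since functions in $\Dzero \cap \ell^\infty(X)$ vanish on $\partial_h X$ by the definition of the harmonic boundary (equivalently by Corollary~\ref{corollary:d0 as kernel of gamma0}), taking traces yields $\gamma_0 f_h = \gamma_0 f = \varphi$. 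Thus $f_h$ is a harmonic element of $\ow{D} \cap \ell^\infty(X)$ with the prescribed trace, and the uniqueness just established shows it is the only one.

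I do not expect a genuine obstacle among the steps I actually carry out: once existence is imported, the maximum principle does all the work for uniqueness, and the Royden decomposition supplies the refined solution for free. The only points that require care are bookkeeping ones, namely that $\gamma_0$ is well defined and linear on $\ow{D} \cap \ell^\infty(X) \subseteq \mathcal{A}$ and that the harmonic part $f_h$ inherits boundedness and finite energy from $f$; both are guaranteed by the material preceding the theorem.
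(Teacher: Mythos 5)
Your proposal is correct and follows essentially the same route as the paper: existence is imported from \cite[Theorem~5.5]{uniform}, uniqueness in $\widetilde{D}\cap\ell^\infty(X)$ comes from applying the maximum principle (Proposition~\ref{maxprin}) to the difference of two solutions, and the ``in particular'' part is obtained from the Royden decomposition together with the fact that functions in $D_0\cap\ell^\infty(X)$ vanish on the harmonic boundary. The paper's proof is just a terse statement of the same three steps, which you have correctly filled in.
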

\begin{proof}
 The existence of a solution $h_\varphi \in \mathcal{A}$ is contained in  \cite[Theorem 5.5]{uniform} and the uniqueness of solutions in $\ow{D}\cap \ell^\infty(X)$
 follows from the maximum principle, Proposition~\ref{maxprin}. The ``In particular''-part follows from the Royden decomposition and the definition of the harmonic boundary.
\end{proof}

\section{Harmonic measures and the trace map}\label{4}
\subsection{Harmonic measures}
In this section we introduce   measures on the harmonic boundary that are determined by   harmonic functions and employ them to extend the map $\gamma_0$ to all functions of finite energy.  The following theorem is a variant of \cite[Theorem~6.40 and Theorem~6.43]{soardi}.
\begin{proposition}[Existence of harmonic measures]\label{harmmeas}
Let $(b,c)$ be a transient graph over $X$ with $\partial_h X\not=\emptyset$. For every $x \in X$ there exists a regular Borel measure $\mu_x$ on $\partial_h X$ with $\mu_x(\partial_h X) \leq 1$ such that for every harmonic $h \in \ow{D} \cap \ell^\infty(X)$ we have
$$  h(x) = \int_{\partial_h X} \gamma_0 h \, d\mu_x.$$
If $1 \in \A$, then $\mu_x$ is uniquely determined and if $1 \not\in \A$, then $\mu_x$ is uniquely determined up to the value of $\mu_x(\{p_\infty\})$, where $p_\infty$ is the point  mentioned in Lemma~\ref{Adarst}. Moreover, for all $x,y \in X$ there exists a bounded Borel measurable function $K_{x,y}:\partial_h X \to [0,\infty)$ such that $\mu_x = K_{x,y} \cdot \mu_y$.
%
%
%
\end{proposition}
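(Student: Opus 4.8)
The plan is to construct the measures $\mu_x$ by exploiting the Riesz--Markov representation theorem applied to a suitable positive linear functional on $C(\partial_h X)$, then to verify the representation formula, uniqueness, and the mutual absolute continuity by way of the maximum principle (Proposition~\ref{maxprin}) and the Dirichlet problem (Theorem~\ref{Dirprob}). First I would fix $x \in X$ and define a functional on the space $\gamma_0(\ow{D} \cap \ell^\infty(X)) \subseteq C(\partial_h X)$ by sending $\varphi$ to $h_\varphi(x)$, where $h_\varphi \in \ow{D} \cap \ell^\infty(X)$ is the unique harmonic solution to the Dirichlet problem with boundary data $\varphi$, furnished by Theorem~\ref{Dirprob}. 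This functional is well defined and linear by the uniqueness of solutions, and it is positive: if $\varphi \geq 0$ on $\partial_h X$ then $h_\varphi \geq 0$ on $X$ by the maximum principle applied to $-h_\varphi$, whence $h_\varphi(x) \geq 0$. It is also bounded, since the maximum principle gives $\sup_X |h_\varphi| = \sup_{\partial_h X} |\varphi| = \|\varphi\|_\infty$, so the functional has norm at most one and evaluates the constant function $1$ (when available) to at most $1$.

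Next I would extend this functional to all of $C(\partial_h X)$ and invoke Riesz--Markov. In case $1 \in \A$ we have $\gamma_0 \A = C(\partial_h X)$ by Lemma~\ref{Adarst}(a), so the functional is already defined on the full space and Riesz--Markov yields a unique regular Borel measure $\mu_x$ with $\mu_x(\partial_h X) = h_1(x) = 1 \leq 1$ (as $h_1 \equiv 1$), giving both existence and the asserted uniqueness. In case $1 \notin \A$ the functional is only defined on the codimension-one subspace $\{f \in C(\partial_h X) : f(p_\infty) = 0\}$ of Lemma~\ref{Adarst}(b); here I would extend it positively to all of $C(\partial_h X)$ by assigning an arbitrary nonnegative value to the constant function $1$ (equivalently prescribing $\mu_x(\{p_\infty\})$), which pins down the extension on the one extra dimension and explains precisely the stated non-uniqueness up to the mass at $p_\infty$. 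The integral representation $h(x) = \int_{\partial_h X} \gamma_0 h \, d\mu_x$ for harmonic $h \in \ow{D} \cap \ell^\infty(X)$ then follows because such $h$ is itself the solution $h_{\gamma_0 h}$ to its own boundary value problem, so the functional returns exactly $h(x)$.

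For the final claim on mutual absolute continuity, $\mu_x = K_{x,y} \cdot \mu_y$, the idea is to compare the two representing measures via a Harnack-type inequality for harmonic functions in $\ow{D} \cap \ell^\infty(X)$. Concretely, for fixed $x,y$ I expect that there is a constant $C_{x,y} > 0$ with $h(x) \leq C_{x,y}\, h(y)$ for every nonnegative harmonic $h \in \ow{D} \cap \ell^\infty(X)$; such an inequality translates, through the representation formula and positivity, into the domination $\int \varphi \, d\mu_x \leq C_{x,y} \int \varphi \, d\mu_y$ for all nonnegative $\varphi \in \gamma_0(\ow{D} \cap \ell^\infty(X))$, and thus for all nonnegative continuous $\varphi$, forcing $\mu_x \leq C_{x,y}\, \mu_y$ as measures. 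Absolute continuity in both directions gives a bounded Radon--Nikodym density $K_{x,y}$. The Harnack inequality itself should follow from connectedness by chaining the local estimate along a finite path from $y$ to $x$: at a single vertex $z$, harmonicity $\LapUnw h(z) = 0$ expresses $h(z)$ as a weighted combination of neighboring values, which for nonnegative $h$ bounds $h(z)$ against any fixed neighbor's value up to the ratio of edge weights.

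The main obstacle I anticipate is establishing this Harnack inequality with a genuinely finite constant in the present level of generality, namely for possibly non-locally-finite graphs with a killing term $c$. The one-step bound $h(x) \leq (1/p(x,y))\, h(y)$ along an edge, where $p(x,y) = b(x,y)/(\sum_z b(x,z) + c(x))$ is the transition probability, uses only harmonicity and nonnegativity and is robust, but one must check that the denominator does not degenerate and that the presence of $c > 0$ (which only decreases $h(x)$ relative to the neighbor average) does not obstruct the estimate; in fact the killing term helps rather than hurts the upper bound. Chaining along a path connecting $x$ and $y$ then yields the desired $C_{x,y}$, and the density $K_{x,y}$ inherits its bounds from the two-sided Harnack constants $C_{x,y}$ and $C_{y,x}$.
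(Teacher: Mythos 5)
Your construction of $\mu_x$ via the functional $\varphi\mapsto h_\varphi(x)$ on $\gamma_0(\BFEspace)$, positivity and boundedness from the maximum principle, and Riesz--Markov is exactly the route the paper takes; where you genuinely diverge is the harmonic kernel $K_{x,y}$. The paper simply invokes Soardi's Theorem~6.43 (adapted to $c\neq 0$), whereas you give a self-contained argument: a one-step Harnack bound from harmonicity and nonnegativity, chained along a path (using connectedness and the standing assumption $\sum_y b(x,y)<\infty$, so all constants are finite), yielding $\mu_x\le C_{x,y}\,\mu_y$ on nonnegative continuous functions and hence a bounded Radon--Nikodym density. This is a real gain in self-containedness, and it is correct up to an indexing slip: harmonicity at $x$ gives $h(y)\le h(x)/p(x,y)$ for neighbors $y$, so the bound $h(x)\le h(y)/p(y,x)$ uses harmonicity at $y$; the chaining is unaffected. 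Your treatment of the case $1\notin\A$ (extend the positive functional from the codimension-one subspace, the free parameter being precisely the mass at $p_\infty$) also differs mildly from the paper, which builds the measure on $\partial_h X\setminus\{p_\infty\}$ and extends by zero; both work.

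Two points need repair, though both are fixable. First, in the case $1\in\A$ you claim the functional ``is already defined on the full space'' because $\gamma_0\A=C(\partial_h X)$; this conflates $\gamma_0\A$ with $\gamma_0(\BFEspace)$. The Dirichlet problem has a \emph{unique} solution only for data in $\gamma_0(\BFEspace)$, which is merely uniformly dense in $C(\partial_h X)$, so in both cases you must extend the bounded positive functional by continuity from this dense subspace before applying Riesz--Markov --- which is precisely what the paper does. Second, positivity of $\varphi\mapsto h_\varphi(x)$ does not follow from ``the maximum principle applied to $-h_\varphi$'': Proposition~\ref{maxprin} controls only $\sup|h|$, not signs. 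The paper attributes positivity to the Royden decomposition, and indeed one can argue: for harmonic $h\in\BFEspace$ with $\gamma_0 h\ge 0$, the function $h\vee 0$ has the same trace, so by uniqueness its harmonic part is $h$; then $\ow{Q}(h\vee 0)\le\ow{Q}(h)$ (Markov property) together with Theorem~\ref{theorem:royden decomposition} forces the $\Dzero$-part of $h\vee 0$ to have zero energy, hence to vanish by transience, giving $h=h\vee 0\ge 0$.
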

\begin{proof}
First suppose that $1$ belongs to the algebra $\mathcal{A}$. Let $z\in X$ be arbitrary. Then, the idea of the existence of $\mu_z$ is as follows: every function $\varphi$ in $\gamma_0(\ow{D} \cap \ell^\infty(X))$ can be uniquely extended to a harmonic function $\widetilde{\varphi}$ in $\ow{D} \cap \ell^\infty(X)$ by Theorem~\ref{Dirprob}. This mapping is linear and positivity preserving (as a consequence of Theorem~\ref{theorem:royden decomposition}). Moreover, by Lemma~\ref{Adarst}, the set
  $\gamma_0(\ow{D} \cap \ell^\infty(X))$ is uniformly dense in $C(\partial_h X)$. Therefore, the mapping $\varphi\mapsto \widetilde{\varphi}(z)$ is a positive and bounded (by Proposition~\ref{maxprin}) linear functional on a dense subspace, and, using Riesz-Markov, we infer the existence of $\mu_z$. For a detailed proof of the existence and the properties of $\mu_z$, we
refer to \cite[Theorem~6.40]{soardi}. Note, that this theorem treats the case $c \equiv 0$. However, the proof carries over to our situation. The existence of the functions $K_{x,y}$ can be proven as \cite[Theorem~6.43]{soardi}.

Next suppose $1 \not\in \mathcal{A}$. Then, the proofs of  \cite[Theorem~6.40 and Theorem~6.43]{soardi} can be employed to construct Borel measures $\mu_x$ on $\partial_h X \setminus \{ p_\infty\}$ and bounded measurable functions $K_{x,y}:\partial_h X \setminus\{p_\infty\} \to [0,\infty)$ such that the theorem holds with $\partial_h X$ being replaced by  $\partial_h X \setminus \{p_\infty\}$, where $p_\infty$ is the point  mentioned in Lemma~\ref{Adarst}. We extend the $\mu_x$ and $K_{x,y}$ to $\partial_h X$ by letting $\mu_x(\{p_\infty\}) = 0$ and $K_{x,y}(p_\infty) = 0$. Clearly these extensions possess the desired properties. The uniqueness statement follows from the fact that $\gamma_{0} (\ow{D}\cap\ell^\infty(X))$ is uniformly dense in $\{f\in C(\partial_h X): f(p_\infty)=0\}$ by Lemma~\ref{Adarst}.
\end{proof}
%
%
 Let $(b,c)$ be a transient graph over $X$ with $\partial_h X\not=\emptyset$. For every $x\in X$ the unique finite regular Borel measures $\mu_x$ on $\partial_h X$ that
 satisfy the assertions of Proposition \ref{harmmeas} (with $\mu_x(\{p_\infty\}) = 0$ if $1 \not \in \A$)  is called {\em harmonic measure at point $x$}.
 The corresponding family of bounded Borel measurable functions $K_{x,y}$, $x,y \in X$, with $\mu_x = K_{x,y} \cdot \mu_y$ is called the {\em harmonic kernel}.

In what follows we call a measure $\mu$ on $\partial_h X$ harmonic if there is $z \in X$ such that $\mu = \mu_z$ holds.

Let $\mu$ be a harmonic measure. The boundedness of the harmonic kernels in particular guarantees
the equality $$L^p(\partial_h X, \mu)=L^p(\partial_h X, \mu_x)$$ for every $x\in X$ and $p\geq 1$.

 Let $(b,c)$ be a transient graph over $X$ with $\partial_h X\not=\emptyset$ and let $\mu$ be a harmonic measure. For $\varphi \in L^1(\partial_h X, \mu)$ the {\em harmonic extension} $H_\varphi:X \to \IR$ of $\varphi$ is defined pointwise by
$$H_\varphi (x) := \int_{\partial_h X} \varphi   \,d\mu_x.$$

   The boundedness of the harmonic kernel yields that $H_\varphi$  exists for all $\varphi \in L^1(\partial_h X, \mu)$ and implies the following continuity statement.
\begin{lemma}\label{lemma:pointwise continuity harmonic extension}
 Let $(b,c)$ be a transient graph over $X$ with $\partial_h X \neq \emptyset$ and let $\mu$ be a harmonic measure. If $\varphi_n \to \varphi$ in $L^1(\partial_h X,\mu)$, then $H_{\varphi_n} \to H_\varphi$ pointwise.
\end{lemma}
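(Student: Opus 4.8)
The plan is to fix a vertex $x \in X$ and to show directly that the numerical sequence $\left(H_{\varphi_n}(x)\right)_n$ converges to $H_\varphi(x)$; since pointwise convergence is a statement for each fixed $x$ separately, this suffices. The only tool needed is the boundedness of the harmonic kernel from Proposition~\ref{harmmeas}, which converts convergence in $L^1(\partial_h X,\mu)$ into convergence of the defining integrals $\int_{\partial_h X}\varphi_n\,d\mu_x$.

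More precisely, by the definition of a harmonic measure I may write $\mu = \mu_z$ for some $z \in X$. Proposition~\ref{harmmeas} provides a bounded Borel measurable function $K_{x,z} : \partial_h X \to [0,\infty)$ with $\mu_x = K_{x,z} \cdot \mu_z = K_{x,z} \cdot \mu$; set $C_x := \sup_{\partial_h X} K_{x,z} < \infty$. Using the definition of the harmonic extension together with this change of measure, I would estimate
$$
|H_{\varphi_n}(x) - H_\varphi(x)|
= \left| \int_{\partial_h X} (\varphi_n - \varphi) \, d\mu_x \right|
\leq \int_{\partial_h X} |\varphi_n - \varphi| \, K_{x,z} \, d\mu
\leq C_x \, \|\varphi_n - \varphi\|_{L^1(\partial_h X,\mu)}.
$$
Letting $n \to \infty$ and using $\varphi_n \to \varphi$ in $L^1(\partial_h X,\mu)$ then yields $H_{\varphi_n}(x) \to H_\varphi(x)$. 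As $x \in X$ was arbitrary, this is precisely the claimed pointwise convergence.

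There is essentially no serious obstacle here; the argument is a one-line estimate once the harmonic kernel is invoked. The only point that needs a routine check is that all the integrals above are well defined, i.e.\ that $\varphi,\varphi_n \in L^1(\partial_h X,\mu_x)$ so that $H_\varphi(x)$ and $H_{\varphi_n}(x)$ are finite. This is guaranteed by the remark preceding the lemma that the boundedness of the harmonic kernels forces $L^1(\partial_h X,\mu) = L^1(\partial_h X,\mu_x)$ for every $x \in X$, and the same boundedness is exactly what produces the finite constant $C_x$ controlling the estimate.
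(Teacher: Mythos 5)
Your proof is correct and is essentially identical to the paper's own argument: both fix $x \in X$, use the harmonic kernel $K_{x,z}$ from Proposition~\ref{harmmeas} to rewrite $\mu_x = K_{x,z}\cdot\mu$, and bound $|H_{\varphi_n}(x)-H_\varphi(x)|$ by a constant times $\|\varphi_n-\varphi\|_{L^1(\partial_h X,\mu)}$. No gaps; your additional remark on well-definedness of the integrals matches the discussion preceding the lemma in the paper.
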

\begin{proof}
Let $x\in X$ be arbitrary and $z\in X$ be such that $\mu=\mu_z$. Then, we infer
\[|H_{\varphi_n}(x)-H_\varphi(x)|\leq \int_{\partial_h X} |\varphi_n-\varphi|d\mu_x=\int_{\partial_h X} |\varphi_n-\varphi|K_{x,z}d\mu.\]
Thus, the boundedness of $K_{x,z}$ and the $L^1(\partial_h X, \mu)$-convergence of $(\varphi_n)$ yield the result.
\end{proof}
It can be proven that for any $\varphi \in L^1(\partial_h X,\mu)$ its harmonic extension $H_\varphi$ is indeed a harmonic function. We restrict ourselves to the following special case. Recall that we use the subscript $h$ to denote the harmonic part of a function in its Royden decomposition.
\begin{proposition}[Properties of solutions to the Dirichlet problem]\label{Hgamma}
 Let $(b,c)$ be a transient graph over $X$ with $\partial_h X\not=\emptyset$. For every $\varphi\in \gamma_0 (\BFEspace)$ the function $H_\varphi$
 is the unique solution of the Dirichlet problem
 $$\begin{cases}\mathcal{L} h = 0, \\ \gamma_0 h = \varphi,\quad h \in \ow{D}.\end{cases}$$
In particular, for every $f\in\Dgen \cap \ell^\infty(X)$ we have
$$H_{\gamma_0 f}=f_h$$
 and for every normal contraction $C:\IR \to \IR$ and every $\varphi\in \gamma_0 (\Dgen \cap \ell^\infty(X))$ we have
$$H_{C\circ \varphi}=(C\circ (H_\varphi))_h.$$
\end{proposition}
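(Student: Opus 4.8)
The plan is to prove the three assertions in turn, using the Dirichlet problem (Theorem~\ref{Dirprob}), the defining property of the harmonic measures (Proposition~\ref{harmmeas}), and the Royden decomposition (Theorem~\ref{theorem:royden decomposition}) as the main inputs. For the first assertion I fix $\varphi \in \gamma_0(\BFEspace)$ and first address uniqueness: since $\gamma_0$ is only defined on $\A \subseteq \linf$, any competitor $h \in \ow{D}$ for which $\gamma_0 h$ is meaningful lies in $\ow{D} \cap \A \subseteq \BFEspace$, so uniqueness over $\ow{D}$ reduces to the uniqueness of harmonic solutions in $\BFEspace$ guaranteed by Theorem~\ref{Dirprob}. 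It then remains to identify the solution $h \in \BFEspace$ from Theorem~\ref{Dirprob} with $H_\varphi$. Because $h$ is bounded and harmonic, the representation property in Proposition~\ref{harmmeas} gives $h(x) = \int_{\partial_h X} \gamma_0 h \, d\mu_x = \int_{\partial_h X} \varphi \, d\mu_x = H_\varphi(x)$ for every $x \in X$ (the integral makes sense since $\varphi \in C(\partial_h X)$ and $\mu_x$ is finite). Hence $H_\varphi = h$, which proves the first assertion and, importantly for later use, shows $H_\varphi \in \BFEspace$.

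For the second assertion I take $f \in \BFEspace$ and write its Royden decomposition $f = f_0 + f_h$; by Theorem~\ref{theorem:royden decomposition} both $f_0 \in \Dzero$ and the harmonic function $f_h$ are bounded, so $f_0 \in \Dzero \cap \linf$ and $f_h \in \BFEspace$. By Corollary~\ref{corollary:d0 as kernel of gamma0} the function $f_0$ vanishes on $\partial_h X$, so $\gamma_0 f_0 = 0$ and, by linearity of $\gamma_0$, $\gamma_0 f_h = \gamma_0 f$. Thus $f_h$ is a harmonic function in $\BFEspace$ solving the Dirichlet problem with boundary data $\gamma_0 f$, and the uniqueness established in the first part forces $f_h = H_{\gamma_0 f}$.

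Finally, for the third assertion I fix a normal contraction $C$ and $\varphi \in \gamma_0(\BFEspace)$. By the first part $H_\varphi \in \BFEspace$, and since $\ow{Q}$ is Markovian and $|C \circ H_\varphi| \leq |H_\varphi|$, the composition $C \circ H_\varphi$ again belongs to $\BFEspace$. Applying the second assertion to $f = C \circ H_\varphi$ yields $H_{\gamma_0(C \circ H_\varphi)} = (C \circ H_\varphi)_h$, so the proof reduces to the commutation identity $\gamma_0(C \circ H_\varphi) = C \circ \varphi$. I expect this identity to be the main obstacle, since $\gamma_0$ is linear while $C$ is not: the argument is that the unique continuous extension of $C \circ H_\varphi$ to the Royden compactification must equal $C$ composed with the continuous extension of $H_\varphi$, because the latter is continuous on $R$, agrees with $C \circ H_\varphi$ on the dense subset $X$, and continuous extensions from $X$ to $R$ are unique. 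Restricting this equality to $\partial_h X$ gives $\gamma_0(C \circ H_\varphi) = C \circ \gamma_0 H_\varphi = C \circ \varphi$, and combining it with the previous display completes the proof. All remaining steps are a direct assembly of the quoted results; the one genuine subtlety is this interchange of the boundary-value map with a nonlinear normal contraction, which rests entirely on the uniqueness of continuous extensions from $X$ to $R$.
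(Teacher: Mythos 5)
Your proof is correct and follows essentially the same route as the paper's: the first assertion via Theorem~\ref{Dirprob} together with the representation property of harmonic measures (Proposition~\ref{harmmeas}), the second via the Royden decomposition and the vanishing of $f_0$ on $\partial_h X$, and the third by commuting $\gamma_0$ with the normal contraction through uniqueness of continuous extensions to $R$ and then applying the second assertion to $C \circ H_\varphi$. The only differences are beneficial elaborations — you make explicit the reduction of uniqueness over $\ow{D}$ to uniqueness in $\BFEspace$ and verify $C \circ H_\varphi \in \BFEspace$ via the Markov property of $\ow{Q}$ — which the paper leaves implicit.
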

\begin{proof}
  Let $\varphi\in \gamma_0(\BFEspace)$. According to Theorem~\ref{Dirprob}, there is a unique harmonic $h\in\BFEspace$ such that
 $\gamma_0 h =\varphi$. Moreover, Proposition \ref{harmmeas} yields
 $$h(x)=\int_{\partial_h X} \varphi  \,d\mu_x$$
 for every $x\in X$. From the definition of $H_\varphi$ we infer $H_\varphi=h$, and so $H_\varphi$ is the unique solution to the Dirichlet problem.

Let $f \in \ow{D}\cap \ell^\infty(X)$ and let $f = f_0 + f_h$ be its Royden decomposition. According to Theorem~\ref{theorem:royden decomposition},
we have $f_0 \in \Dzero \cap \ell^\infty(X)$ and so $\gamma_0 f_0 = 0$ on $\partial_h X$ by the definition of the harmonic boundary.
This implies $\gamma_0 f = \gamma_0 f_h$. Therefore, $f_h \in \ow{D}$ is a solution to the Dirichlet problem with boundary value $\gamma_0f$.
The uniqueness proven above yields $f_h = H_{\gamma_0 f}$.

 Let $C:\IR \to \IR$ be a normal contraction and let $\varphi \in \gamma_0(\ow{D} \cap \ell^\infty(X))$. Since $\gamma_0$ operates by restricting a function on $R$ to a function on $\partial_h X$ and since it only depends on the harmonic part, we obtain
 $$\gamma_0 ((C\circ H_\varphi)_h)=\gamma_0 (C\circ H_\varphi)=C\circ (\gamma_0 H_\varphi )= C\circ \varphi.$$
 For the last equality we used that $H_\varphi$ solves the Dirichlet problem. With this at hand, the desired equality follows from the identity $H_{\gamma_0 ((C\circ H_\varphi)_h)} = (C\circ H_\varphi)_h$,  which was already proven. This finishes the proof.
\end{proof}

\subsection{The trace map}
The following theorem shows that the restriction of the preliminary trace map $\gamma_0$ to $\ow{D} \cap \ell^\infty(X)$ is a continuous map from $(\ow{D}\cap\ell^\infty(X),\|\cdot\|_o)$
to $L^2(\partial_h X,\mu)$. It is proven in \cite[Lemma 7.8]{kasue} for the Kuramochi boundary but the proof given there works for the Royden boundary as well.
We include a proof in Appendix \ref{appendix: proof kasue} for the convenience of the reader.

\begin{theorem}[Continuity of the preliminary trace map]\label{gammaLtwo}
Let $(b,c)$ be a transient graph over $X$ with $\partial_h X\not=\emptyset$ and let $\mu$ be a harmonic measure.  For every $o\in X$ there exists a constant $C_o \geq 0$ such that for all $f \in \ow{D}\cap \ell^\infty(X)$ we have
$$\int_{\partial_h X} (\gamma_0 f)^2 \, d\mu \leq C_o  \|f\|_o^2.$$
\end{theorem}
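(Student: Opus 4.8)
The plan is to reduce the boundary integral to the value at a single point of a harmonic function and then to convert this value into an energy expression via Green's formula. Fix $z \in X$ with $\mu = \mu_z$. I would first prove the estimate for $o = z$; the case of general $o$ then follows at once from the equivalence of the norms $\|\cdot\|_o$ and $\|\cdot\|_z$, which only affects the value of the constant. So let $f \in \ow{D}\cap\ell^\infty(X)$ and set $u := f_h$. Since $f_0 \in \Dzero\cap\ell^\infty(X)$ vanishes on $\partial_h X$ by the definition of the harmonic boundary, we have $\gamma_0 f = \gamma_0 u$, while the Royden decomposition (Theorem~\ref{theorem:royden decomposition}) shows that $u$ is a bounded harmonic function with $\Qgen(u) \le \Qgen(f)$.

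The starting point is the identity
\[
\int_{\partial_h X}(\gamma_0 f)^2\,d\mu = \int_{\partial_h X}\gamma_0(u^2)\,d\mu_z = (u^2)_h(z),
\]
which combines multiplicativity of $\gamma_0$ with the facts $H_{\gamma_0(u^2)} = (u^2)_h$ (Proposition~\ref{Hgamma}) and $H_\varphi(z) = \int\varphi\,d\mu_z$; here $u^2 \in \ow{D}\cap\ell^\infty(X)$ because this space is an algebra (Proposition~\ref{proposition:algebraic and order properties of domains}). Writing $(u^2)_h(z) = u(z)^2 - (u^2)_0(z)$, it then suffices to bound the two terms $(u^2)_0(z)$ and $u(z)^2$ by $\|f\|_z^2$.

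To handle $(u^2)_0(z)$ I would introduce the element $g_z \in \Dzero$ representing point evaluation, $\Qgen(\phi,g_z) = \phi(z)$ for all $\phi \in \Dzero$; it exists because, on a transient graph, $(\Dzero,\Qgen)$ is a Hilbert space on which point evaluations are $\Qgen$-continuous (Proposition~\ref{lemma:properties of dtilde}). Testing against $\phi \in C_c(X)$ with Green's formula (Proposition~\ref{Green}) identifies $g_z$ as the Green's function, $\mathcal{L}g_z = \delta_z$. The harmonicity of $u$ gives, for every $x$,
\[
\mathcal{L}(u^2)(x) = -c(x)u(x)^2 - \sum_{y\in X} b(x,y)(u(x)-u(y))^2 \le 0 ,
\]
and, since $(u^2)_h$ is harmonic hence $\Qgen$-orthogonal to $\Dzero$ (Lemma~\ref{orthharm}), the definition of $g_z$ together with Green's formula yields
\[
(u^2)_0(z) = \Qgen\big((u^2)_0,g_z\big) = \Qgen(u^2,g_z) = \sum_{x\in X} g_z(x)\,\mathcal{L}(u^2)(x).
\]
Here the crucial input is that $g_z$ is bounded and nonnegative: this makes $\sum_x|g_z(x)\mathcal{L}(u^2)(x)| \le 2\|g_z\|_\infty\Qgen(u) < \infty$, so Green's formula applies, and it gives
\[
(u^2)_h(z) = u(z)^2 + \sum_{x\in X} g_z(x)\Big(c(x)u(x)^2 + \sum_{y\in X} b(x,y)(u(x)-u(y))^2\Big) \le u(z)^2 + 2\|g_z\|_\infty\,\Qgen(f).
\]

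It remains to bound $u(z)^2$. From $u(z) = f(z) - f_0(z)$ and the Cauchy--Schwarz estimate $|f_0(z)| = |\Qgen(f_0,g_z)| \le \Qgen(f)^{1/2}\Qgen(g_z)^{1/2}$ one obtains $u(z)^2 \le 2f(z)^2 + 2\Qgen(g_z)\Qgen(f)$. Inserting this above produces $\int_{\partial_h X}(\gamma_0 f)^2\,d\mu \le 2f(z)^2 + \big(2\Qgen(g_z)+2\|g_z\|_\infty\big)\Qgen(f) \le C_z\|f\|_z^2$ with $C_z := \max\{2,\,2\Qgen(g_z)+2\|g_z\|_\infty\}$, completing the case $o = z$. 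I expect the one genuinely nontrivial ingredient to be the boundedness of the Green's function $g_z$, which is where transience enters in an essential way; the remaining steps are routine manipulations with the Royden decomposition, Green's formula and the Cauchy--Schwarz inequality for $\Qgen$.
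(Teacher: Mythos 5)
Your proposal is correct and follows essentially the same route as the paper's proof in Appendix~\ref{appendix: proof kasue}: both reduce $\int_{\partial_h X}(\gamma_0 f)^2\,d\mu$ to $H_{(\gamma_0 f)^2}(z)$, split off the point value $f_h(z)^2$ (controlled via transience and continuity of point evaluation), and bound the remaining $\Dzero$-part $-(u^2)_0(z)$ by pairing with the bounded Green's function $g_z$ through Green's formula (Proposition~\ref{Green}) together with the identity of Lemma~\ref{HarmQuadrat} for $\mathcal{L}(u^2)$. The only slip is a citation: the existence of $g_z$ with $\Qgen(\phi,g_z)=\phi(z)$ on $\Dzero$ and, crucially, its boundedness come from Theorem~\ref{CharTrans} (characterization of transience), not from Proposition~\ref{lemma:properties of dtilde}, which concerns the Hilbert space $(\ow{D},\as{\cdot,\cdot}_o)$ rather than $(\Dzero,\Qgen)$.
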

 We note several important consequences of the preceding theorem.
\begin{corollary}[The trace map]\label{corollary: properties of trace map}
 Let $(b,c)$ be a transient graph with $\partial_h X \neq \emptyset$, let $\mu$ be a harmonic measure and let $o \in X$. There exists a unique continuous linear operator
 $\Tr:(\ow{D},\|\cdot\|_o) \to L^2(\partial_h X, \mu)$
 that extends $\gamma_0:\ow{D} \cap \ell^\infty(X) \to C(\partial_h X)$.
\end{corollary}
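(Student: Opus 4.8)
The plan is to obtain $\Tr$ as the unique bounded-linear extension of $\gamma_0$ from a dense subspace, invoking the standard bounded-linear-transformation (BLT) extension principle. The two structural facts that make this work are that the source $(\ow{D},\|\cdot\|_o)$ is a Hilbert space (Proposition~\ref{lemma:properties of dtilde}) and that the target $L^2(\partial_h X,\mu)$ is complete; it then remains only to verify that $\gamma_0$ is bounded on a suitable dense subspace.

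First I would record that $\gamma_0$, viewed as a map into $L^2(\partial_h X,\mu)$, is bounded on $\ow{D}\cap\ell^\infty(X)$. Since $\mu$ is a finite measure we have $C(\partial_h X)\subseteq L^2(\partial_h X,\mu)$, so $\gamma_0$ indeed takes values in $L^2$. Theorem~\ref{gammaLtwo} then reads $\|\gamma_0 f\|_{L^2(\partial_h X,\mu)}^2 = \int_{\partial_h X}(\gamma_0 f)^2\,d\mu \leq C_o\|f\|_o^2$ for all $f\in\ow{D}\cap\ell^\infty(X)$, i.e. $\gamma_0$ is $\sqrt{C_o}$-Lipschitz from $(\ow{D}\cap\ell^\infty(X),\|\cdot\|_o)$ into $L^2(\partial_h X,\mu)$.

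Second, and this is the only point genuinely requiring care, I would show that $\ow{D}\cap\ell^\infty(X)$ is dense in $(\ow{D},\|\cdot\|_o)$. Given $f\in\ow{D}$, set $f^{(n)}:=(f\wedge n)\vee(-n)\in\ow{D}\cap\ell^\infty(X)$. Proposition~\ref{proposition:algebraic and order properties of domains} supplies $\ow{Q}(f-f^{(n)})\to 0$. To upgrade this to $\|\cdot\|_o$-convergence, note that for $n\geq|f(o)|$ one has $f^{(n)}(o)=f(o)$, so the point-evaluation term eventually vanishes and $\|f-f^{(n)}\|_o^2=\ow{Q}(f-f^{(n)})+(f(o)-f^{(n)}(o))^2\to 0$. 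Hence the truncations approximate $f$ in the norm $\|\cdot\|_o$, which establishes density.

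With boundedness on a dense subspace and completeness of the target in hand, the BLT theorem produces a unique continuous linear map $\Tr:(\ow{D},\|\cdot\|_o)\to L^2(\partial_h X,\mu)$ extending $\gamma_0$: for $f\in\ow{D}$ choose $\ow{D}\cap\ell^\infty(X)\ni f_n\to f$ in $\|\cdot\|_o$, observe that $(\gamma_0 f_n)$ is Cauchy in $L^2$ by the estimate of Theorem~\ref{gammaLtwo}, and set $\Tr f:=\lim_n\gamma_0 f_n$; the same estimate guarantees that this limit is independent of the approximating sequence and that $\Tr$ is bounded, while uniqueness of the extension follows from continuity together with the density just shown. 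The main (and essentially only) obstacle is the density step, but it is dispatched directly by the truncation convergence already available in Proposition~\ref{proposition:algebraic and order properties of domains}.
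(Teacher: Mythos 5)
Your proposal is correct and takes essentially the same route as the paper: boundedness of $\gamma_0$ from Theorem~\ref{gammaLtwo}, density of $\ow{D}\cap\ell^\infty(X)$ in $(\ow{D},\|\cdot\|_o)$ via the truncations of Proposition~\ref{proposition:algebraic and order properties of domains}, and the standard unique bounded extension. Your explicit verification that the point-evaluation term $(f(o)-f^{(n)}(o))^2$ vanishes for large $n$ (so that $\ow{Q}$-convergence upgrades to $\|\cdot\|_o$-convergence) is a detail the paper leaves implicit, and it is handled correctly.
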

\begin{proof}
 By the previous theorem the operator $\gamma_0:(\ow{D} \cap \ell^\infty(X),\|\cdot\|_o) \to L^2(\partial_h X,\mu)$ is continuous and  $\ow{D}\cap \ell^\infty(X)$ is dense in
 $\ow{D}$ with respect to $\|\cdot\|_o$ by Proposition~\ref{proposition:algebraic and order properties of domains}. Therefore, $\gamma_o$ has a unique continuous extension $\Tr:(\ow{D},\|\cdot\|_o) \to L^2(\partial_h X, \mu)$.
\end{proof}
\begin{definition}
Let $(b,c)$ be a transient graph with $\partial_h X \neq \emptyset$, let $\mu$ be a harmonic measure and let $o \in X$.
We call the operator $$\Tr:(\ow{D},\|\cdot\|_o) \to L^2(\partial_h X, \mu)$$ the \emph{trace map}.
\end{definition}

\begin{remarks}
 For different $o \in X$ the norms $\|\cdot\|_o$ are equivalent and the boundedness of the harmonic kernel ensures that all $L^2$-spaces with respect to harmonic measures
 are equal. Hence, the trace map is independent of the choice of $o$  and the harmonic measure.
\end{remarks}

The following lemma shows that the properties of $\gamma_0$ extend to $\Tr$.
\begin{lemma}[Properties of the trace map]\label{proposition:properties of trace}
  Let $(b,c)$ be a transient graph with $\partial_h X \neq \emptyset$.
  \begin{itemize}
   \item[(a)] The trace map commutes with normal contractions, i.e., for all normal contractions $C:\IR \to \IR$ and all $f \in \ow{D}$ we have $\Tr (C\circ f) = C \circ (\Tr f)$.
   \item[(b)] The kernel of the trace map satisfies $\ker \Tr = \Dzero.$
   \item[(c)]  For any $f \in \ow{D}$ we have $H_{\Tr f} = f_h$.
   \item[(d)] For any $\varphi \in \Tr \ow{D}$ we have $\varphi = \Tr H_\varphi.$ Moreover, if $C:\IR \to \IR$ is a normal contraction, then
   $$H_{C \circ \varphi} = (C \circ H_\varphi)_h.$$
  \end{itemize}
\end{lemma}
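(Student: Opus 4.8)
The common thread is that all four assertions are already available on the subspace $\ow{D} \cap \ell^\infty(X)$, where $\Tr$ coincides with $\gamma_0$, and that they propagate to all of $\ow{D}$ through the truncations $f^{(n)} := (f \wedge n) \vee (-n)$: these lie in $\ow{D} \cap \ell^\infty(X)$ and satisfy $f^{(n)} \to f$ with respect to $\|\cdot\|_o$ by Proposition~\ref{proposition:algebraic and order properties of domains}, hence $\Tr f^{(n)} \to \Tr f$ in $L^2(\partial_h X,\mu)$. I would establish (c) first, deduce (b), isolate (a) as the genuine difficulty, and read off (d) from the others. For (c), the bounded case $H_{\gamma_0 f} = f_h$ is exactly Proposition~\ref{Hgamma}. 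For general $f$, since $\mu$ is finite (so $L^2$-convergence implies $L^1$-convergence) Lemma~\ref{lemma:pointwise continuity harmonic extension} gives $H_{\Tr f^{(n)}} \to H_{\Tr f}$ pointwise, while $H_{\Tr f^{(n)}} = (f^{(n)})_h \to f_h$ pointwise by the bounded case and the continuity of the Royden decomposition, Lemma~\ref{lemma:poinwise continuity royden decomposition}. Uniqueness of pointwise limits yields $H_{\Tr f} = f_h$.

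For (b), the inclusion $\Dzero \subseteq \ker \Tr$ holds because $\gamma_0$ vanishes on $C_c(X) \subseteq \Dzero \cap \ell^\infty(X)$ (functions in $\Dzero \cap \ell^\infty(X)$ vanish on $\partial_h X$ by Corollary~\ref{corollary:d0 as kernel of gamma0}), and $C_c(X)$ is $\|\cdot\|_o$-dense in $\Dzero$ while $\Tr$ is continuous. Conversely, if $\Tr f = 0$, then by (c) the harmonic part is $f_h = H_{\Tr f} = H_0 = 0$, so the Royden decomposition gives $f = f_0 \in \Dzero$.

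The crux is (a). On $\ow{D} \cap \ell^\infty(X)$ the identity $\gamma_0(C \circ f) = C \circ \gamma_0 f$ is the algebraic composition property of the Royden algebra (the continuous extension of $C \circ f$ to $R$ is $C$ composed with that of $f$). The obstacle is that $g \mapsto C \circ g$ need not be norm-continuous on $(\ow{D},\|\cdot\|_o)$, so I cannot simply pass to the limit inside $\Tr(C \circ f^{(n)})$; I would argue weakly instead. The sequence $C \circ f^{(n)}$ is $\|\cdot\|_o$-bounded, since $\ow{Q}(C \circ f^{(n)}) \le \ow{Q}(f^{(n)})$ by the Markov property and $|C(f^{(n)}(o))| \le |f^{(n)}(o)|$, and it converges pointwise to $C \circ f$; because point evaluations are $\|\cdot\|_o$-continuous by Proposition~\ref{lemma:properties of dtilde}, every weak subsequential limit must equal $C \circ f$, so $C \circ f^{(n)} \rightharpoonup C \circ f$ weakly (Lemma~\ref{lemma:q-wekly convergent sequences}). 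Boundedness of $\Tr$ then gives $\Tr(C \circ f^{(n)}) \rightharpoonup \Tr(C \circ f)$ weakly in $L^2$. On the other hand $\Tr(C \circ f^{(n)}) = C \circ \Tr f^{(n)} \to C \circ \Tr f$ strongly in $L^2$, because the contraction property yields $\|C \circ \Tr f^{(n)} - C \circ \Tr f\|_2 \le \|\Tr f^{(n)} - \Tr f\|_2 \to 0$. Uniqueness of weak limits gives $\Tr(C \circ f) = C \circ \Tr f$.

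Finally (d) is formal. Writing $\varphi = \Tr f$, part (c) gives $H_\varphi = f_h \in \ow{D}$, and then $\Tr H_\varphi = \Tr f_h = \Tr f = \varphi$, using $\Tr f_0 = 0$ from (b). For the contraction identity, $C \circ H_\varphi \in \ow{D}$, so applying (c) to it and then (a) yields $(C \circ H_\varphi)_h = H_{\Tr(C \circ H_\varphi)} = H_{C \circ \Tr H_\varphi} = H_{C \circ \varphi}$. I expect the weak-convergence step in (a) to be the only nonroutine point; everything else is bookkeeping with the truncation approximation and the continuity statements already recorded.
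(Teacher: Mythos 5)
Your proposal is correct, and it follows the paper's global strategy (all statements hold for bounded functions via $\gamma_0$, then propagate to $\ow{D}$ through the truncations $f^{(n)}$), but it deviates from the paper's proof in two genuine ways, both to your advantage. First, in (a): the paper also obtains the weak convergence $C \circ f^{(n)} \rightharpoonup C \circ f$ in $(\ow{D},\|\cdot\|_o)$ from Lemma~\ref{lemma:q-wekly convergent sequences}, but then invokes the Banach--Saks theorem to extract a subsequence whose Ces\`aro means converge \emph{strongly} to $C\circ f$, and applies the norm-continuous trace to those means. You instead use that the bounded operator $\Tr$ is weak-to-weak continuous, so $\Tr(C\circ f^{(n)}) \rightharpoonup \Tr(C\circ f)$ in $L^2(\partial_h X,\mu)$, and identify this weak limit with the strong $L^2$-limit $C\circ \Tr f$ of $C \circ \Tr f^{(n)} = \Tr(C \circ f^{(n)})$ via uniqueness of weak limits; this eliminates Banach--Saks and is, if anything, cleaner. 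Second, your logical ordering differs: you prove (c) first (by the same pointwise-limit comparison as the paper, correctly noting that finiteness of $\mu$ turns $L^2$- into $L^1$-convergence so that Lemma~\ref{lemma:pointwise continuity harmonic extension} applies) and then read off the inclusion $\ker\Tr \subseteq \Dzero$ in (b) from $f_h = H_{\Tr f} = H_0 = 0$ and the Royden decomposition. The paper instead proves (b) from (a) by a second truncation argument ($\gamma_0 f^{(n)} = 0$, Corollary~\ref{corollary:d0 as kernel of gamma0}, and closedness of $\Dzero$ in $(\ow{D},\|\cdot\|_o)$); your route is shorter and equally non-circular, since your proof of (c) uses neither (a) nor (b). The forward inclusion in (b) and all of (d) coincide with the paper's argument.
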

\begin{proof} (a):  The preliminary trace $\gamma_0$ operates on continuous functions on $R$ by restricting them to $\partial_h X$. Hence, for $f \in \ow{D}\cap \ell^\infty(X)$ we obtain
 $$\Tr (C\circ f) = \gamma_0 (C\circ f)  = C \circ (\gamma_0 f) =  C \circ (\Tr f).$$
 It remains to extend this equality to unbounded functions. Let $f \in \ow{D}$ and, for $n \geq 0$, let $f^{(n)}:= (f \wedge n)\vee (-n)$. Since $f^{(n)} \to f$
 with respect to $\|\cdot\|_o$, as $n \to \infty$, by Proposition~\ref{proposition:algebraic and order properties of domains}, the continuity of the trace map and the Lipschitz continuity of $C$ imply
  $$ C\circ \Tr f^{(n)} \to C \circ \Tr f, \text{ in } L^2(\partial_h X,\mu),\, \text{ as } n \to \infty.$$
  Moreover, we have $C \circ  f^{(n)} \to C \circ f$ pointwise, as $n \to \infty$, and
  $$\ow{Q}(C \circ f^{(n)}) \leq \ow{Q}(f).$$
  Lemma~\ref{lemma:q-wekly convergent sequences} yields $C \circ f^{(n)} \to C \circ f$ $\ow{Q}$-weakly, as $n \to \infty,$ and hence weakly in $(\ow{D},\|\cdot\|_o)$.
  Since  $(\ow{D},\|\cdot\|_o)$ is a Hilbert space, the Banach-Saks theorem implies the existence of a subsequence $(C \circ f^{(n_k)})$ such that
  $$\frac{1}{N}\sum_{k=1}^N C \circ f^{(n_k)} \to C \circ f, \text{ in } (\ow{D},\|\cdot\|_o), \text{ as } N \to \infty. $$
  Combining this observation with the continuity of the trace and what we have already proven yields
  \begin{align*}
   \Tr (C \circ f) &= \lim_{N\to \infty} \frac{1}{N}\sum_{k=1}^N \Tr (C \circ f^{(n_k)}) \\
  &=    \lim_{N\to \infty} \frac{1}{N}\sum_{k=1}^N C \circ \Tr f^{(n_k)}\\
  &= C \circ \Tr f.
  \end{align*}
  (b): The inclusion $\Dzero \subseteq \ker \Tr$ follows from the inclusion $C_c(X) \subseteq D_0 \cap \ell^\infty(X) \subseteq \ker \gamma_0$,
  see Corollary~\ref{corollary:d0 as kernel of gamma0},  and the continuity of $\Tr$.  Let $f \in \ker \Tr$ be given. Using the notation as in the proof of (a), and statement (a), we obtain
 $$0 = (\Tr f)^{(n)}  = \Tr f^{(n)} = \gamma_0 f^{(n)}, \text{ for all } n \geq 0.$$
  According to Corollary~\ref{corollary:d0 as kernel of gamma0}, this implies $f^{(n)} \in  \Dzero$ for all $n \geq 0$. Since $f^{(n)} \to f$ in $(\ow{D},\|\cdot\|_o)$, as
  $n\to \infty$, and since $\Dzero$ is a closed subspace, we infer $f \in \Dzero$.

 (c): Let $f \in \ow{D}$ and let $(f_n)$ a sequence in $\ow{D} \cap \ell^\infty(X)$ that converges to $f$ with respect to $\|\cdot\|_o$. The continuity of $\Tr$ and the continuity of the harmonic extension yields $H_{\Tr f_n} \to H_{\Tr f}$ pointwise. Moreover, Lemma~\ref{lemma:poinwise continuity royden decomposition} yields $(f_n)_h \to f_h$ pointwise. With these observations, the equality $H_{\Tr f} = f_h$ can be directly inferred from Proposition~\ref{Hgamma}, which shows $H_{\Tr f_n} = H_{\gamma_0 f_n} = (f_n)_h$.

 (d):  For $\varphi = \Tr f$ with $f \in \ow{D}$ assertions (b) and (c) imply
 $$\varphi = \Tr f = \Tr(f_0  + f_h) = \Tr f_h = \Tr H_{\Tr f} = \Tr H_\varphi.$$
 Moreover, for a normal contraction $C:\IR \to \IR$ assertions (a),(c) and the identity $\varphi = \Tr H_\varphi$ yield
 $$H_{C \circ \varphi} = H_{C \circ (\Tr H_\varphi) }  = H_{\Tr (C \circ H_\varphi)} = (C\circ H_\varphi)_h.$$
 This finishes the proof.
\end{proof}
\section{Traces of Dirichlet forms}\label{5}

In this section  we introduce the concept of traces of Dirichlet forms on graphs.

\begin{definition}[Trace Dirichlet form]\label{definition:trace dirichlet form}
 Let $(b,c)$ be a transient graph over a discrete measure space $(X,m)$ with $\partial_h X \neq \emptyset$ and let $\mu$ be a harmonic measure. The \emph{trace of a Dirichlet form}
 $Q$ on $\ell^2(X,m)$  is the quadratic form $\Tr Q$ on $L^2(\partial_h X,\mu)$
 with domain
 $$D(\Tr Q) := \Tr D(\Qe),$$
 on which it acts by
 $$\Tr Q(\varphi,\psi) := \Qe(H_\varphi,H_\psi).$$
 The trace of the form $\NeuDir$ is called the {\em Dirichlet to Neumann form} and is denoted by $$q^{DN}:=\Tr\NeuDir.$$
\end{definition}

Once one has a (good) trace map and a (good) harmonic extension of functions living one some auxiliary set (in our case  the harmonic boundary) this notion of trace Dirichlet forms is standard, see the following remarks.

\begin{remarks} \label{remark:our trace v.s. fukushima trace}
Under suitable assumptions traces of Dirichlet forms can be defined in a much more general context.
This is carried out in \cite[Chapter~5]{CF} where quasi-regular Dirichlet forms $ Q $ are considered. For suitable (i.e., quasi-closed) subsets $ F $ one can define restrictions for suitable (i.e., quasi-continuous) functions. Furthermore, for suitable (i.e., quasi-continuous) functions on this subset $ F $ one can define the harmonic extension via the associated stochastic process. In turn the trace of $ Q $ is then obtained by these harmonic extensions.
Our approach is related but it is nontrivial to prove that both definitions agree. Furthermore, our approach has the advantage that it is purely analytic and does not involve potential-theoretic quasi-notions. Moreover, if one knows the harmonic boundary and the harmonic measures our definition is more explicit.

\end{remarks}

\begin{remarks}
 On other graph boundaries the form $q^{DN}$ has appeared before at several places. In \cite{Sil}, $q^{DN}$ is considered as a regular Dirichlet form on the Martin boundary of the underlying graph and its  Beurling-Deny decomposition is derived. In particular, he proves that $q^{DN}$ is a jump form. In \cite{kasue2},  $q^{DN}$ is  considered as a form on the Kuramochi boundary. In both cases, $q^{DN}$ is defined as above with appropriately chosen trace map and harmonic extension.
\end{remarks}


As can be seen in Theorem~\ref{theorem:trace is a dirichlet form}
below, the trace of a Dirichlet form is in general only a Dirichlet
form in the wide sense. Recall that a \emph{Dirichlet form in the
wide sense} or Dirichlet form i. t. w. s. for short  is defined in
\cite{FOT} as a closed, Markovian form which is not necessarily
densely defined.

\subsection{The main result}
The following theorem is the main theorem of this paper. Its first part
says that every Dirichlet form $Q$ between $\QD$ and $\QN$
can be decomposed into an inner part (acting on functions on $X$)
and a boundary part (acting on functions on $\partial_h X$). Moreover,
its second part says that certain natural forms on the boundary yield Dirichlet
forms on $X$ via ``adding'' $\QD$. If the measure $m$ is finite this
yields that there is a class of Dirichlet forms in the wide sense on
the boundary that parametrizes the Dirichlet forms between $\QD$ and
$\QN$.

\medskip

In order to conveniently state the theorem we define for a graph
$(b,c)$ over $(X,m)$  the set of Dirichlet forms between $\QN$ and
$\QD$ by
$$\mathcal{D}(X,b,c,m):= \{Q \text{ Dirichlet form on } \ell^2(X,m) \text{ with } \QD \geq Q \geq
\QN\}$$ and the set of Dirichlet forms on the boundary by
$$\mathcal{D} (\partial X,b,c,m):=\{q \geq q^{DN} \text{ Dirichlet form i.~t.~w.~s. on } L^2(\partial_h X,\mu) \text{ with } q - q^{DN} \text{
Markovian}\}.$$

\begin{theorem}\label{theorem:main}
Let $(b,c)$ be a transient graph over the discrete measure space
$(X,m)$ with $\partial_h X \neq \emptyset$ and let $\mu$ be a
harmonic measure.
  \begin{itemize}
   \item[(a)] For  all Dirichlet forms $Q\in \mathcal{D}(X,b,c,m)$
   the form $q :=\Tr Q$ belongs to $\mathcal{D} (\partial X,b,c,m)$
   and with $q':= q -q^{DN}$ we have

      $$Q(f) = \QDe(f_0) + q(\Tr f) = \ow{Q}(f_0) + q (\Tr f)$$
   and
   $$Q(f) = \QN(f) + q'(\Tr f) = \ow{Q}(f) + q'(\Tr f).$$
   \item[(b)] For all Dirichlet forms in the wide sense $q \in \mathcal{D} (\partial
   X,b,c,m)$ the quadratic form
   $Q_q : \ell^2(X,m) \to [0,\infty]$ defined by
   $$Q_q (f):= \begin{cases}
   \ow{Q}(f_0) + q(\Tr f) &\text{if } f \in D(\QN) \text{ and } \Tr f \in D(q)\\
   \infty &\text{else}
   \end{cases} $$
   is a Dirichlet form with $\QD \geq Q_q \geq \QN$. Moreover, if $m$ is finite,
   the form $Q_q$  satisfies $q = \Tr Q_q$.
  \end{itemize}
\end{theorem}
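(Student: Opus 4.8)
The backbone of part~(a) is the Royden decomposition (Theorem~\ref{theorem:royden decomposition}) combined with a vanishing-of-cross-terms lemma: for every $f\in D(\Qe)$ with Royden decomposition $f=f_0+f_h$ I claim $\Qe(f_0,f_h)=0$, so that $\Qe(f)=\ow{Q}(f_0)+\Qe(f_h)$. The plan is to prove this by reducing to compactly supported test functions. If $\varphi\in C_c(X)\subseteq\Dzero$, then $(\Qe-\QNe)(\varphi)=0$ because both extended forms coincide with $\ow{Q}$ on $\Dzero$ (Lemma~\ref{lemma:extended dirichlet spaces of qd and qn}); since $\Qe-\QNe$ is a non-negative form on $D(\Qe)$, Cauchy--Schwarz forces $\Qe(\varphi,g)=\QNe(\varphi,g)=\ow{Q}(\varphi,g)$ for all $g\in D(\Qe)$. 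Taking $g=f_h$ and invoking Green's formula (Proposition~\ref{Green}) gives $\Qe(\varphi,f_h)=\sum_x\varphi(x)\LapUnw f_h(x)=0$, as $f_h$ is harmonic. Approximating $f_0$ by such $\varphi$ in $\|\cdot\|_o$ (so that $\Qe(f_0-\varphi)=\ow{Q}(f_0-\varphi)\to0$) and using Cauchy--Schwarz for $\Qe$ then yields $\Qe(f_0,f_h)=0$.

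Granting this, both formulas follow. For $f\in D(Q)$ we get $Q(f)=\Qe(f)=\ow{Q}(f_0)+\Qe(f_h)$; since $f_h=H_{\Tr f}$ (Lemma~\ref{proposition:properties of trace}(c)), Definition~\ref{definition:trace dirichlet form} identifies $\Qe(f_h)=q(\Tr f)$ with $q:=\Tr Q$, giving $Q(f)=\QDe(f_0)+q(\Tr f)$. Writing $\ow{Q}(f)=\ow{Q}(f_0)+\ow{Q}(f_h)$ via the orthogonality of $\Dzero$ and harmonic functions (Lemma~\ref{orthharm}) and $\ow{Q}(f_h)=\QNe(f_h)=q^{DN}(\Tr f)$ turns this into $Q(f)=\QN(f)+q'(\Tr f)$ with $q'=q-q^{DN}$. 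The domain identity $D(Q)=\{f\in D(\QN):\Tr f\in D(q)\}$ follows from the standard relation $D(Q)=D(\Qe)\cap\ltwo$ together with $\ker\Tr=\Dzero\subseteq D(\Qe)$ (Lemma~\ref{proposition:properties of trace}(b)), so the identities hold on all of $\ltwo$. That $q$ is a closed Markovian form, hence a Dirichlet form i.~t.~w.~s., I will cite from Theorem~\ref{theorem:trace is a dirichlet form}, while $q\ge q^{DN}$ is immediate from $D(\Qe)\subseteq D(\QNe)$ and $\Qe\ge\QNe$.

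The main obstacle is the Markovianity of $q'=q-q^{DN}$. The plan is to reduce it to a statement about the forms on $X$: subtracting the two representations of $\Qe$ and $\QNe$ gives the identity $(\Qe-\QNe)(g)=q'(\Tr g)$ for all $g\in D(\Qe)$, and since $\Tr$ commutes with normal contractions (Lemma~\ref{proposition:properties of trace}(a)), applying this identity to $u=H_\varphi$ and to $C\circ u$ shows that $q'(C\circ\varphi)\le q'(\varphi)$ is \emph{equivalent} to $(\Qe-\QNe)(C\circ u)\le(\Qe-\QNe)(u)$. This is the genuinely new point (Posilicano's observation in our context) and is not formal, since $\Qe$ and $\QNe$ are individually Markovian yet the difference of two Dirichlet forms need not be. I expect to establish it by the explicit energy computations of Section~\ref{section:differences of dirichlet forms}, exploiting that $\Qe$ and $\QNe$ share the same pointwise Dirichlet integral $\tfrac12\sum b(x,y)(\,\cdot\,)^2+\sum c(x)(\,\cdot\,)^2$ and differ only through boundary behaviour.

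For part~(b) the first step is the algebraic identity $Q_q(f)=\ow{Q}(f)+q'(\Tr f)$, obtained from the defining formula by substituting $\ow{Q}(f_0)=\ow{Q}(f)-q^{DN}(\Tr f)$. From the two representations the asserted properties follow: $Q_q=\QD$ on $D(\QD)$ (there $f_0=f$ and $\Tr f=0$), whence $\QD\ge Q_q$; the hypothesis $q\ge q^{DN}$ gives $Q_q\ge\QN$; Markovianity follows from the second representation because $\ow{Q}$ is Markovian, $q'$ is Markovian by hypothesis, and $\Tr$ commutes with normal contractions; and $C_c(X)\subseteq D(Q_q)$ makes $Q_q$ densely defined. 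For closedness I take a $Q_q$-Cauchy sequence $(f_n)$ converging in $\ltwo$ to some $f$; as $Q_q\ge\QN$ it is $\QN$-Cauchy, so closedness of $\QN$ gives $f\in D(\QN)$ and convergence in $\|\cdot\|_o$, hence $\Tr f_n\to\Tr f$, while $Q_q(g)\ge q(\Tr g)$ makes the traces $q$-Cauchy, so closedness of $q$ yields $\Tr f\in D(q)$, i.e.\ $f\in D(Q_q)$; the first representation together with $\ow{Q}((f_n-f)_0)\le\ow{Q}(f_n-f)\to0$ then gives $Q_q(f_n-f)\to0$ (Lemma~\ref{lemma:poinwise continuity royden decomposition} controlling the parts). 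Finally, when $m$ is finite, $\Tr Q_q=q$: part~(a) applied to $Q_q$ gives $(\Tr Q_q)(\Tr f)=q(\Tr f)$ on $D(Q_q)$, and $\Tr D(Q_q)=D(q)$ because for bounded $\varphi\in D(q)$ the extension $H_\varphi$ is bounded, hence lies in $\ltwo$ precisely when $m$ is finite, so $H_\varphi\in D(Q_q)$ with $\Tr H_\varphi=\varphi$ (the general case by truncation and closedness of $q$); together with the reverse inclusion $D(\Tr Q_q)\subseteq D(q)$ from a $Q_q$-Cauchy approximation, this gives $\Tr Q_q=q$ and shows the constructions are mutually inverse, the identity $Q_{\Tr Q}=Q$ being immediate from part~(a) for any $m$.
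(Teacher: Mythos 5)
Your proposal is correct and takes essentially the same route as the paper: the orthogonality $\Qe(f_0,f_h)=0$ (the paper's Lemma~\ref{lemma:royden decomposition in dqe}, which you prove by the equivalent Cauchy--Schwarz/Green's-formula argument), the identification $f_h=H_{\Tr f}$ and the definition of the trace form for the two representations, the reduction of the Markovianity of $q'$ to that of $\Qe-\QNe$ (exactly the paper's Corollary~\ref{corollary: q-qdn is markovian}), and for (b) the same two representations, the same closedness argument via $Q_q\geq\QN$ and closedness of $q$, and the same bounded-function/truncation argument for $q=\Tr Q_q$ when $m$ is finite. Your deferral of the Markovianity of $\Qe-\QNe$ to the energy computations of Section~\ref{section:differences of dirichlet forms} is legitimate, since that is precisely Theorem~\ref{theorem:q-qn is markovian}, on which the paper's own proof also rests.
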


The theorem has the following corollary.

\begin{corollary}\label{coro-parametrization}
 In the situation of the theorem the map
$$ \Tr:\mathcal{D}(X,b,c,m) \longrightarrow \mathcal{D}
 (\partial X,b,c,m)$$
is injective and, if $m$ is finite, also surjective.
\end{corollary}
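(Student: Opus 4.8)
The plan is to deduce both assertions directly from Theorem~\ref{theorem:main}, since the corollary is little more than a reformulation of its two parts. The first thing to record is that the map is well defined: by part (a) of the theorem, for every $Q \in \mathcal{D}(X,b,c,m)$ the form $\Tr Q$ lies in $\mathcal{D}(\partial X,b,c,m)$, so $\Tr$ genuinely maps the first set into the second.

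For injectivity I would exploit the first decomposition formula of part (a). Suppose $Q_1, Q_2 \in \mathcal{D}(X,b,c,m)$ satisfy $\Tr Q_1 = \Tr Q_2 =: q$. The crucial observation is that the right-hand side of
$$Q_i(f) = \ow{Q}(f_0) + q(\Tr f)$$
depends only on $f$ and on the common form $q$, but not on the index $i$: the Royden part $f_0$ is uniquely determined by $f$ via Theorem~\ref{theorem:royden decomposition}, the trace $\Tr f$ is determined by $f$ as well, and $q$ is shared. Invoking the convention $Q_i(f) = \infty$ for $f \notin D(Q_i)$, this identity holds for every $f \in \ell^2(X,m)$, whence $Q_1(f) = Q_2(f)$ for all $f$, that is, $Q_1 = Q_2$.

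For surjectivity under the hypothesis that $m$ is finite I would invoke part (b). Given any $q \in \mathcal{D}(\partial X,b,c,m)$, the associated form $Q_q$ is, by part (b), a Dirichlet form satisfying $\QD \geq Q_q \geq \QN$, so $Q_q \in \mathcal{D}(X,b,c,m)$. The finiteness of $m$ then guarantees, again by part (b), the identity $\Tr Q_q = q$. Thus $q$ lies in the image of $\Tr$, and surjectivity follows.

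Since all of the substance is carried by Theorem~\ref{theorem:main}, there is no genuine obstacle at the level of the corollary itself; the only point worth flagging is that the finiteness assumption on $m$ enters exclusively through the last assertion of part (b) — the equality $\Tr Q_q = q$ — which is precisely why surjectivity, but not injectivity, requires it.
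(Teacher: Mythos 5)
Your overall route is the same as the paper's: injectivity is read off from part (a) of Theorem~\ref{theorem:main} and surjectivity (for finite $m$) from part (b); your treatment of well-definedness and of surjectivity is complete and correct. However, your injectivity argument has a gap at the step ``invoking the convention $Q_i(f)=\infty$ for $f \notin D(Q_i)$, this identity holds for every $f \in \ell^2(X,m)$''. This is not a matter of convention. First, for $f \in \ell^2(X,m)\setminus \ow{D}$ neither $f_0$ nor $\Tr f$ is defined, so the right-hand side does not even make sense there (this case is harmless, since $D(Q_i) \subseteq D(\QN) \subseteq \ow{D}$ forces $Q_1(f)=Q_2(f)=\infty$, but it should be noted). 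More seriously, part (a) is proved, and hence can only be invoked, for $f \in D(Q_i)$; extending the formula to all of $D(\QN)$ with the $\infty$-convention is exactly equivalent to the assertion that the domain $D(Q_i)$ is recoverable from $q$, namely $D(Q_i)=\{f \in D(\QN) \,:\, \Tr f \in D(q)\}$. Since two closed forms can agree wherever both are finite and still be different because their domains differ, this domain identification is the actual content of injectivity, and as written you have assumed it rather than proven it.

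The gap is easy to close with tools from the paper. Suppose $f \in D(\QN)$ and $\Tr f \in D(q) = \Tr D(Q_{i,{\rm e}})$, and choose $g \in D(Q_{i,{\rm e}})$ with $\Tr g = \Tr f$. Then $f - g \in \ker \Tr = \Dzero$ by Lemma~\ref{proposition:properties of trace}~(b), and $\Dzero = D(\QDe) \subseteq D(Q_{i,{\rm e}})$ by Lemma~\ref{lemma:extended dirichlet spaces of qd and qn}, so $f = g + (f-g) \in D(Q_{i,{\rm e}})$. Since $f \in \ell^2(X,m)$, the characterization $D(Q_i) = D(Q_{i,{\rm e}}) \cap \ell^2(X,m)$ of \cite[Theorem~1.1.5]{CF} (which the paper also uses in the proof of part (b)) yields $f \in D(Q_i)$. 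With this in hand, $\Tr Q_1 = \Tr Q_2$ implies $D(Q_1) = D(Q_2)$, and on the common domain part (a) gives $Q_1(f) = \ow{Q}(f_0) + q(\Tr f) = Q_2(f)$; together these two statements prove injectivity.
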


The remainder of this section is dedicated to the proof of the preceding theorem and its corollary.
The proof of part (a) is essentially based on two theorems,
namely Theorem \ref{theorem:trace is a dirichlet form} and Theorem
\ref{theorem:q-qn is markovian}. These two theorems and their proofs
are the main content of the subsequent two sections. The proof of
part (b) is essentially based on the properties of the harmonic
extensions that are discussed in Section~\ref{4}. Before we start
with the proof, we include a few remarks.

\begin{remarks}[Relation to existing literature] \label{remark:ende}
 For pairs of regular Dirichlet forms $\mathcal{E}$ and $\tilde{\mathcal{E}},$ where
  $\tilde{\mathcal{E}}$ is a Silverstein extension of $\mathcal{E}$ this type of decomposition and its probabilistic interpretation are studied in \cite[Chapter 7]{CF}.
  In contrast to this classical approach we consider a whole family of forms associated to a graph instead of pairs of Dirichlet forms.
  Therefore, the boundary on which we decompose forms is a genuine geometric object associated with the given graph. Moreover, we give a precise characterization of
  the image of the trace map on forms when $m$ is finite. Similar results have only been obtained recently in \cite{posi}
  for extensions of elliptic operators on bounded domains in $\IR^d$. However, our methods are different from the ones used in \cite{posi}
  and \cite{CF}. We directly use form methods while \cite{posi} uses Krein's resolvent formula and \cite{CF} uses a mixture of probabilistic and analytic arguments
  that are based on the Beurling-Deny representation of regular Dirichlet forms.
\end{remarks}

\begin{remarks}[Necessity of finite measure condition] The assumption of finite measure in the last part of the theorem above should not come as a surprise. Assume in sharp contrast that
 there is a constant $C >0$ such that $m$ satisfies $m(x) \geq C$ for all $x \in X$. Then all functions in $\ell^2(X,m)$ vanish at infinity. In particular, $\Tr f = 0$
 for any function  $f \in \ell^2(X,m) \cap \ow{D}$. In this situation nontrivial Dirichlet forms on $L^2(\partial_h X,\mu)$ cannot be realized as traces of forms on $\ell^2(X,m)$.
 Hence, the map $\Tr$ is in general not surjective.
\end{remarks}

\begin{remarks}[Topological character of the theorem]
The given proofs and the definition of the trace form show that the previous theorem is rather a theorem about extended Dirichlet forms considered as quadratic
forms on $C(X)$ (without a fixed reference measure) than about Dirichlet forms. This explains why we  obtain a full characterization of the image of the trace map when $m$
is finite. In this case, bounded functions in the domain of the extended Dirichlet form belong to the domain of the Dirichlet form and so the theory of extended Dirichlet
forms and of the Dirichlet forms from which they derive is basically the same.
\end{remarks}

\subsection{Traces of Dirichlet forms are Dirichlet forms in the wide sense}
In this section we prove that traces of Dirichlet forms are Dirichlet forms in the wide sense. In particular, we give a purely analytic proof for (part of) \cite[Theorem~5.2.2]{CF} in the framework of infinite graphs (see Remark~\ref{remark:our trace v.s. fukushima trace} as well).

\begin{theorem} \label{theorem:trace is a dirichlet form}
 Let $(b,c)$ be a transient graph over a discrete measure space $(X,m)$ with $\partial_h X \neq \emptyset$ and let $\mu$ be a harmonic measure.
 For every Dirichlet form $Q$ on $\ell^2(X,m)$ that satisfies $\QD \geq Q \geq \QN$ the trace form $\Tr Q$ is a Dirichlet form in the wide sense on $L^2(\partial_h X,\mu)$.
 In particular, $q^{DN}$ is a Dirichlet form in the wide sense on $L^2(\partial_h X,\mu)$. Moreover, if $m$ is finite, then $q^{DN}$ is densely defined.
\end{theorem}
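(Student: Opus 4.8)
The plan is to prove three things in sequence: that $\Tr Q$ is Markovian, that it is closed, and (under the finiteness assumption on $m$) that it is densely defined. First I would recall the structure of the trace form: by Definition~\ref{definition:trace dirichlet form}, $\Tr Q$ has domain $\Tr D(\Qe)$ and acts by $\Tr Q(\varphi) = \Qe(H_\varphi)$. The key algebraic tool is the interplay between the harmonic extension $H$ and the trace map $\Tr$ established in Lemma~\ref{proposition:properties of trace}, especially part (d), which gives $\varphi = \Tr H_\varphi$ for $\varphi \in \Tr\ow{D}$ and the crucial commutation identity $H_{C \circ \varphi} = (C \circ H_\varphi)_h$ for normal contractions $C$.

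For the Markov property, let $\varphi \in D(\Tr Q)$ and let $C$ be a normal contraction. I would show $C \circ \varphi \in D(\Tr Q)$ with $\Tr Q(C\circ \varphi) \leq \Tr Q(\varphi)$. Using Lemma~\ref{proposition:properties of trace}(d), $H_{C\circ\varphi} = (C\circ H_\varphi)_h$, so by the Royden orthogonality (Theorem~\ref{theorem:royden decomposition}) and the fact that $\Qe$ inherits the Markov property from $Q$ (noted after Proposition~\ref{proposition:pointwise lower semicontinuity extended space}), I would estimate
\begin{align*}
\Tr Q(C \circ \varphi) &= \Qe((C \circ H_\varphi)_h) \leq \Qe(C \circ H_\varphi) \leq \Qe(H_\varphi) = \Tr Q(\varphi),
\end{align*}
where the first inequality drops the $\Dzero$-part using $\QDe(f_0)\geq 0$ together with the decomposition in Lemma~\ref{lemma:extended dirichlet spaces of qd and qn}, and the second is the Markov property of $\Qe$. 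One must also check $C\circ\varphi$ actually lies in the domain, i.e. $C\circ H_\varphi$ has finite $\Qe$-energy and $\Tr(C\circ H_\varphi) = C\circ\varphi$; the latter follows from Lemma~\ref{proposition:properties of trace}(a).

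The main obstacle is closedness, which is where the real work lies. I would equip $D(\Tr Q)$ with the form norm $(\Tr Q(\cdot) + \|\cdot\|^2_{L^2})^{1/2}$ and take a Cauchy sequence $(\varphi_n)$. The natural strategy is to lift to the harmonic extensions $H_{\varphi_n}$, show these form a suitable Cauchy sequence in $(\ow{D},\|\cdot\|_o)$ (using that $\Tr Q(\varphi_n) = \Qe(H_{\varphi_n})$ controls energy and that the $H_{\varphi_n}$ are harmonic, so their energy and their trace are tightly linked via the trace-continuity estimate of Theorem~\ref{gammaLtwo} and Lemma~\ref{proposition:properties of trace}(d)), pass to a limit $h \in \ow{D}$, and then set $\varphi := \Tr h$. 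The delicate point is relating $\Qe$-Cauchyness of the $H_{\varphi_n}$ to convergence that controls both the $L^2(\partial_h X,\mu)$-norm and the energy simultaneously; here the harmonicity is essential because it forces $\Qe(H_\varphi) = \ow{Q}(H_\varphi)$ and makes the energy of a harmonic extension equivalent, up to the constant $C_o$ of Theorem~\ref{gammaLtwo}, to a quantity controlled by the boundary data. I would then verify $\Tr Q(\varphi - \varphi_n) \to 0$ using the pointwise lower semicontinuity of $\Qe$ (Proposition~\ref{proposition:pointwise lower semicontinuity extended space}) together with pointwise convergence $H_{\varphi_n} \to H_\varphi$ from Lemma~\ref{lemma:pointwise continuity harmonic extension}.

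Finally, for the dense-definedness when $m$ is finite, the point is that finiteness of $m$ implies $\ow{D}\cap\ell^\infty(X) \subseteq \ell^2(X,m)$, so bounded finite-energy functions lie in $D(\QN) = \ow{D}\cap\ell^2(X,m)$ and hence (being in the extended domain) their traces lie in $D(q^{DN})$. Since $\Tr(\ow{D}\cap\ell^\infty(X)) = \gamma_0(\ow{D}\cap\ell^\infty(X))$ is uniformly dense in $C(\partial_h X)$ (or in the appropriate closed hyperplane, by Lemma~\ref{Adarst}), and uniform convergence on the compact boundary implies $L^2(\partial_h X,\mu)$-convergence, I would conclude that $D(q^{DN})$ is dense in $L^2(\partial_h X,\mu)$. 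The finiteness of $m$ is exactly what is needed to guarantee these bounded functions lie in the Dirichlet domain rather than only in the extended domain.
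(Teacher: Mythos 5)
Your proposal follows essentially the same route as the paper: the Markov property via the identity $H_{C\circ\varphi}=(C\circ H_\varphi)_h$ combined with the $\Qe$-orthogonality of the Royden decomposition and the Markov property of $\Qe$; closedness by lifting a Cauchy sequence to the harmonic extensions, using lower semicontinuity of $\Qe$ under pointwise convergence and continuity of the trace map to identify the limit; and density of $D(q^{DN})$ for finite $m$ via Lemma~\ref{Adarst}. Two points of bookkeeping: the decomposition you need to ``drop the $\Dzero$-part'' is $\Qe(f)=\QDe(f_0)+\Qe(f_h)$, which is Lemma~\ref{lemma:royden decomposition in dqe} (not Lemma~\ref{lemma:extended dirichlet spaces of qd and qn}); this is a genuinely nontrivial input that uses $\QD\geq Q\geq\QN$, since $\Qe$ need not be a restriction of $\ow{Q}$.

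That last remark also exposes two misstatements in your closedness paragraph, neither of which is fatal because only weaker (true) facts are needed. First, harmonicity does \emph{not} force $\Qe(H_\varphi)=\ow{Q}(H_\varphi)$: in general $\Qe(H_\varphi)-\ow{Q}(H_\varphi)=(\Tr Q-q^{DN})(\varphi)$, which is exactly the boundary term the whole theory is after, and it vanishes on $\Dzero$ rather than on harmonic functions. What your argument actually needs is only the inequality $\ow{Q}\leq\QNe\leq\Qe$ on $D(\Qe)$ (Lemma~\ref{lemma:extended dirichlet spaces of qd and qn}), which transfers $\Qe$-Cauchyness of $(H_{\varphi_n})$ to $\ow{Q}$-Cauchyness. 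Second, Theorem~\ref{gammaLtwo} bounds the boundary data by the energy, not the energy by the boundary data; no reverse bound can hold, as it would make $q^{DN}$ a bounded form on $L^2(\partial_h X,\mu)$. The ingredient you need to upgrade $\ow{Q}$-Cauchyness to $\|\cdot\|_o$-Cauchyness is the control of point values, $|H_\psi(o)|\leq \|K_{o,z}\|_\infty\|\psi\|_{L^1(\mu)}$, i.e.\ the boundedness of the harmonic kernel as in Lemma~\ref{lemma:pointwise continuity harmonic extension}; Theorem~\ref{gammaLtwo} then enters only through the continuity of $\Tr$, used to verify $\varphi_n=\Tr H_{\varphi_n}\to \Tr h=\varphi$ and hence (by uniqueness of pointwise limits) $h=H_\varphi$. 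With these corrections your argument closes exactly as the paper's does; also note that in the density step, when $1\notin\mathcal{A}$ one needs $\mu(\{p_\infty\})=0$ to pass from uniform density in the hyperplane $\{f\in C(\partial_h X): f(p_\infty)=0\}$ to density in $L^2(\partial_h X,\mu)$.
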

For $\varphi = \Tr f$ with $f \in D(\Qe)\subseteq \ow{D}$ we have $H_\varphi = H_{\Tr f} = f_h$ by Lemma~\ref{proposition:properties of trace}.
Therefore, the following lemma implies that our definition of $\Tr Q$ is appropriate, as it shows that the Royden decomposition is also an orthogonal decomposition with respect to the extended space of a form between $\QD$ and $\QN$.

\begin{lemma} \label{lemma:royden decomposition in dqe}
 Let $(b,c)$ be a transient graph over a discrete measure space $(X,m)$ with $\partial_h X \neq \emptyset$. Let $Q$ be a Dirichlet form on $\ell^2(X,m)$ that satisfies $\QD \geq Q \geq \QN$. For all $f \in D(Q_{\rm e})$ we have $f_0,f_h \in D(\Qe)$ and the identity
 $$\Qe(f) = \QDe(f_0) + \Qe(f_h) = \ow{Q}(f_0) + \Qe(f_h)$$
 holds. In particular, for $f \in D(\Qe)$ and $g \in D(\QDe) = \Dzero$ we have
 $$\Qe(f,g) = \QNe(f,g) = \ow{Q}(f,g).$$
\end{lemma}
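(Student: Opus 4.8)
The plan is to deduce the diagonal identity from the bilinear ``In particular'' statement, which I would establish first and whose proof rests on a Cauchy--Schwarz argument for the difference of two extended forms. First I would record the membership claims. Let $f \in D(\Qe)$. By Lemma~\ref{lemma:extended dirichlet spaces of qd and qn} we have $D(\Qe) \subseteq \ow{D}$, so $f$ admits a Royden decomposition $f = f_0 + f_h$ with $f_0 \in \Dzero$ and $f_h$ harmonic by Theorem~\ref{theorem:royden decomposition}. Since $\Dzero = D(\QDe) \subseteq D(\Qe)$, we get $f_0 \in D(\Qe)$, and as $D(\Qe)$ is a vector space, $f_h = f - f_0 \in D(\Qe)$ as well. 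This settles $f_0, f_h \in D(\Qe)$.

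Next I would prove the bilinear identity: for $f \in D(\Qe)$ and $g \in \Dzero$ one has $\Qe(f,g) = \QNe(f,g) = \ow{Q}(f,g)$. For the second equality, note that Lemma~\ref{lemma:extended dirichlet spaces of qd and qn} gives $\QNe(h) = \ow{Q}(h)$ for every $h \in D(\QNe)$; since $D(\QNe)$ is a vector space containing both $D(\Qe)$ and $\Dzero$, polarization upgrades this to equality of the bilinear forms $\QNe(f,g) = \ow{Q}(f,g)$. The first equality is the crucial point. Here I would consider the symmetric bilinear form $B := \Qe - \QNe$, which is well defined on $D(\Qe) \times D(\Qe)$ because $D(\Qe) \subseteq D(\QNe)$, which is nonnegative on the diagonal by $\Qe \geq \QNe$, and which vanishes on $\Dzero$ since $\Qe(g) = \QNe(g) = \ow{Q}(g)$ there. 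The Cauchy--Schwarz inequality $|B(g,f)| \le B(g)^{1/2} B(f)^{1/2}$ for the nonnegative form $B$ then forces $B(g,f) = 0$ for all $g \in \Dzero$ and $f \in D(\Qe)$, i.e. $\Qe(f,g) = \QNe(f,g)$.

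Finally I would assemble the diagonal identity. Applying the bilinear identity to $f_h \in D(\Qe)$ and $f_0 \in \Dzero$, together with the orthogonality of harmonic functions and $\Dzero$ from Lemma~\ref{orthharm}, yields $\Qe(f_0, f_h) = \ow{Q}(f_0, f_h) = 0$. Expanding the quadratic form and using $\Qe(f_0) = \QDe(f_0) = \ow{Q}(f_0)$ (again Lemma~\ref{lemma:extended dirichlet spaces of qd and qn}) gives
$$\Qe(f) = \Qe(f_0) + 2\Qe(f_0, f_h) + \Qe(f_h) = \QDe(f_0) + \Qe(f_h) = \ow{Q}(f_0) + \Qe(f_h),$$
as desired. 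The main obstacle is the first equality of the bilinear identity: one must resist manipulating $\Qe$ directly, since it need not coincide with $\ow{Q}$ on $D(\Qe)$, and instead exploit that the nonnegative difference form $\Qe - \QNe$ degenerates precisely on $\Dzero$, so that Cauchy--Schwarz annihilates all mixed terms paired against $\Dzero$.
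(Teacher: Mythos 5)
Your proof is correct and takes essentially the same approach as the paper: the paper's key step expands the inequality $\Qe(\alpha f_0 + f_h) \geq \QNe(\alpha f_0 + f_h)$ in $\alpha \in \IR$ and uses that it holds for all $\alpha$, which is precisely the degenerate case of your Cauchy--Schwarz argument for the nonnegative form $\Qe - \QNe$ that vanishes on $\Dzero$. The only difference is organizational --- the paper establishes the orthogonality $\Qe(f_0,f_h)=0$ first and then derives the bilinear ``in particular'' statement, while you prove the bilinear identity first and then specialize to the pair $(f_0,f_h)$.
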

\begin{proof}
 As shown in Lemma~\ref{lemma:extended dirichlet spaces of qd and qn},  $\QD \geq Q \geq \QN$ implies
 $$\Dzero = D(\QD) \subseteq D(\Qe) \subseteq D(\QNe) \subseteq \ow{D}$$
 and so the Royden decomposition theorem can be applied to $f \in D(\Qe)$. We obtain $f_0 \in \Dzero = D(\QDe) \subseteq D(\Qe)$ and so $f_h  = f - f_0 \in D(\Qe)$. Since
 $$\Qe(f) = \Qe(f_0) + 2 \Qe(f_0,f_h) + \Qe(f_h) = \QDe(f_0) + 2 \Qe(f_0,f_h) + \Qe(f_h) ,$$
 we are left to prove $\Qe(f_0,f_h) = 0$. For all $\alpha \in \IR$ the inequality
 $$\Qe(\alpha f_0 + f_h) \geq \QNe(\alpha f_0 + f_h)$$
 holds. Expanding this inequality and using that $\Qe$ and $\QNe$ agree on $\Dzero = D(\QDe)$, we obtain
 $$2\alpha \Qe(f_0,f_h) + \Qe(f_h) \geq 2\alpha \QNe(f_0,f_h) + \QNe(f_h).$$
 Since $\QNe$ is a restriction of $\ow{Q}$, Lemma~\ref{orthharm} implies $\QNe(f_0,f_h) = 0.$ But then the inequality
 $$2\alpha \Qe(f_0,f_h) + \Qe(f_h) \geq \QNe(f_h)$$
 can only hold true for all $\alpha \in \IR$ if $\Qe(f_0,f_h) = 0$.

 For the ``in particular'' part, we let $f \in D(\Qe)$ and $g \in D(\QDe)$ and obtain
 $$\Qe(f,g) = \Qe(f_0,g) + \Qe(f_h,g).$$
 By what we have already proven $\Qe(f_h,g) = \Qe(f_h,g_0) = 0$ follows (apply the derived formula to the function $g_0 + f_h$). Thus, an application of Lemma~\ref{lemma:extended dirichlet spaces of qd and qn} and the Royden decomposition theorem show
 $$\Qe(f,g) = \Qe(f_0,g) = \ow{Q}(f_0,g) = \ow{Q}(f,g) = \QNe(f,g).$$
 This finishes the proof.
\end{proof}

\begin{proof}[Proof of Theorem~\ref{theorem:trace is a dirichlet form}]
 We first show that $\Tr Q$ is closed by proving that $D(\Tr Q)$ equipped with the inner product
 $$\as{\cdot,\cdot}_{\Tr Q} := \Tr Q + \as{\cdot,\cdot}_{L^2(\partial_h X,\mu)}$$
 is a Hilbert space. Let $(\varphi_n)$ be Cauchy with respect to $\as{\cdot,\cdot}_{\Tr Q}$ and let $\varphi$ be its limit in $L^2(\partial_h X,\mu)$. According to Lemma~\ref{lemma:pointwise continuity harmonic extension}, we have $H_{\varphi_n} \to H_\varphi$ pointwise and by the definition of $\Tr Q$ the sequence $(H_{\varphi_n})$ is $\Qe$-Cauchy. The lower semicontinuity of extended Dirichlet spaces with respect to pointwise convergence, see Proposition~\ref{proposition:pointwise lower semicontinuity extended space}, yields
 $$\Qe(H_\varphi - H_{\varphi_m}) \leq \liminf_{n\to \infty} \Qe(H_{\varphi_n} - H_{\varphi_m}).$$
 Therefore, $H_\varphi\in D(\Qe)$ and $H_{\varphi_n} \to H_\varphi$ with respect to $\Qe$. It remains to show that $\varphi \in D(\Tr Q) = \Tr D(\Qe)$,
 which will follow from  $\varphi = \Tr H_\varphi$. Note that this identity can not be directly inferred from Lemma~\ref{proposition:properties of trace}~(d),
 since its assertion is only valid for functions in $\Tr \ow{D}$ but we only assumed $\varphi\in L^2(\partial_h X,\mu)$. According to Lemma~\ref{lemma:extended dirichlet spaces of qd and qn}, convergence with respect to $\Qe$ implies convergence with respect to $\ow{Q}$ and so $H_{\varphi_n} \to H_\varphi$ with respect to $\|\cdot\|_o$. The continuity of the trace and Lemma~\ref{proposition:properties of trace}~(d) applied to $\varphi_n \in \Tr D(\Qe) \subseteq \Tr \ow{D}$ yields
 $$\Tr H_\varphi = \lim_{n \to \infty} \Tr H_{\varphi_n} = \lim_{n \to \infty} \varphi_n = \varphi\mbox{ in }L^2(\partial_h X,\mu).$$
 As mentioned above, this shows closedness of $\Tr Q$.

 For proving the Markov property of $\Tr Q$, we let $\varphi \in D(\Tr Q)$ and let $C:\IR \to \IR$ a normal contraction. Since $\varphi \in \Tr D(\Qe) \subseteq \Tr \ow{D}$,
 Lemma~\ref{proposition:properties of trace} (d) yields $(C\circ H_\varphi)_h = H_{C \circ \varphi}$. With this identity at hand,
 Lemma~\ref{lemma:royden decomposition in dqe} shows $H_{C \circ \varphi} \in D(\Qe)$ and
 $$\Tr Q(\varphi) = \Qe(H_\varphi) \geq \Qe(C \circ H_\varphi) =  \Qe(H_{C \circ \varphi}) + \Qe((C\circ H_\varphi)_0) \geq  \Tr Q(C \circ \varphi). $$
 Here, we used that the extended Dirichlet forms also possess the Markov property.

 If $m$ is finite, we have $\ow{D}\cap \ell^\infty(X) \subseteq \ow{D} \cap \ell^2(X,m) = D(\QN)$ and therefore $\Tr (\ow{D}\cap \ell^\infty(X)) \subseteq D(q^{DN})$.  The algebra $\Tr (\ow{D}\cap \ell^\infty(X)) = \gamma_0 (\ow{D}\cap \ell^\infty(X))$ is uniformly dense in $C(\partial_h X)$ or in $\{C(\partial_h X) \, : \, f(p_\infty) = 0\}$, depending on whether or not $1 \in \mathcal{A}$, cf. Lemma~\ref{Adarst}. Since $\mu$ is a finite regular Borel measure on $\partial_h X$ (with $\mu(\{p_\infty\}) = 0$ if $1 \not \in \mathcal{A}$), this implies the density of $D(q^{DN}) = D(\Tr \QN)$ in $L^2(\partial_h X,\mu)$.
\end{proof}
\subsection{Differences of Dirichlet Forms are Markovian} \label{section:differences of dirichlet forms}

It is the main goal of this section to prove that  for a form $Q$ between $Q^{(D)}$ and $Q^{(N)}$ the difference $Q_e-Q_e^{(N)}$, considererd as a quadratic form on $D(Q)$, is Markovian.
The strategy for the presented proof and the involved formulas are taken from \cite[Chapter~3]{Sch}, which treats more general energy forms on
possibly non-discrete spaces.

\begin{theorem} \label{theorem:q-qn is markovian}
 Let $(b,c)$ be a graph over the discrete measure space $(X,m)$ and let $Q$ be a Dirichlet form on $\ell^2(X,m)$ with $\QD \geq Q \geq \QN$. Then the quadratic form $\Qe - \QNe$ with domain $D(\Qe)$ is Markovian.
\end{theorem}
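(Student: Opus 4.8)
The plan is to reduce the assertion to a single energy inequality and then to establish that inequality by an explicit computation, following the strategy of \cite[Chapter~3]{Sch}.

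First I would dispose of the domain condition and rewrite the difference. By Lemma~\ref{lemma:extended dirichlet spaces of qd and qn} we have $D(\Qe)\subseteq D(\QNe)\subseteq\ow{D}$ and $\QNe(u)=\ow{Q}(u)$ for all $u\in D(\QNe)$; hence for $u\in D(\Qe)$ the difference in question is $(\Qe-\QNe)(u)=\Qe(u)-\ow{Q}(u)\ge 0$. Since $\Qe$ inherits the Markov property from $Q$ (see the remark following Proposition~\ref{proposition:pointwise lower semicontinuity extended space}), every normal contraction $C$ satisfies $C\circ u\in D(\Qe)$, so the domain requirement in the definition of Markovianity is automatic and the only thing left to prove is
\[ \Qe(C\circ u)-\ow{Q}(C\circ u)\ \le\ \Qe(u)-\ow{Q}(u)\qquad\text{for all }u\in D(\Qe). \]

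Next I would localize this inequality to the harmonic part. Applying Lemma~\ref{lemma:royden decomposition in dqe} gives $\Qe(u)=\ow{Q}(u_0)+\Qe(u_h)$, while the $\ow{Q}$-orthogonality of $\Dzero$ and the harmonic functions (Lemma~\ref{orthharm}) gives $\ow{Q}(u)=\ow{Q}(u_0)+\ow{Q}(u_h)$. Subtracting, the inner parts cancel and $(\Qe-\ow{Q})(u)=\Qe(u_h)-\ow{Q}(u_h)$ depends only on $u_h$. Performing the same cancellation for $C\circ u$ and using $(C\circ u)_h=(C\circ u_h)_h$, which follows from Lemma~\ref{proposition:properties of trace}~(a),(d), the displayed inequality is seen to be equivalent to
\[ \Qe(C\circ v)-\ow{Q}(C\circ v)\ \le\ \Qe(v)-\ow{Q}(v) \]
for the harmonic function $v:=u_h\in D(\Qe)$. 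Thus it suffices to treat harmonic arguments.

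The remaining inequality is the technical heart, and here I would follow \cite[Chapter~3]{Sch}. The idea is to produce an explicit representation of the nonnegative form $\Qe-\ow{Q}$ on harmonic functions: choosing a $Q$-approximating sequence for $v$ and invoking the Green's formula of Proposition~\ref{Green}, one expresses $\Qe(v)-\ow{Q}(v)$, together with its bilinear polarization, as a limit of boundary/jump-type sums built from the values of $v$. To these one applies the defining inequalities of a normal contraction termwise; for the pure energy form $\ow{Q}$ this is already visible in the elementary estimates $(C(s)-C(t))^2-(s-t)^2\le 0$ and $C(s)^2-s^2\le 0$, and the point is to show that the additional boundary contribution carried by $\Qe$ only strengthens the estimate. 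I expect this step to be the main obstacle, because a difference of two Markovian forms is in general not Markovian; no soft argument is available, and the proof must exploit the precise structure at hand --- that $\QNe$ is exactly the energy form $\ow{Q}$, that all extra energy of $\Qe$ is carried by the harmonic part, and the compatibility $H_{C\circ\varphi}=(C\circ H_\varphi)_h$ of harmonic extension with normal contractions from Lemma~\ref{proposition:properties of trace}~(d). This is precisely the place where Posilicano's observation that the boundary difference is Markovian is established by purely form-theoretic means.
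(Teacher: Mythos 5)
Your two preliminary reductions are correct: since $\Qe$ is Markovian the domain condition is automatic, and by Lemma~\ref{lemma:royden decomposition in dqe} together with $(C\circ u)_h=(C\circ u_h)_h$ (via Lemma~\ref{proposition:properties of trace}~(a),(c),(d)) the quantity $(\Qe-\QNe)(u)=\Qe(u)-\ow{Q}(u)$ indeed depends only on $u_h$. But these reductions only restate the theorem (in effect they turn it into Corollary~\ref{corollary: q-qdn is markovian}, which the paper \emph{deduces from} this theorem); the actual mathematical content --- the inequality $\Qe(C\circ v)-\ow{Q}(C\circ v)\le\Qe(v)-\ow{Q}(v)$ --- is left unproven. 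Your sketch of the remaining step does not work as stated: the termwise estimates $(C(s)-C(t))^2\le(s-t)^2$ and $C(s)^2\le s^2$ applied to $\ow{Q}$ go the \emph{wrong} way in a difference, since making the subtrahend $\ow{Q}(C\circ v)$ smaller weakens, rather than strengthens, the inequality. Also note that your localization to harmonic $v$ does not really help, because $C\circ v$ is no longer harmonic, so one is forced back to general functions anyway.

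What is genuinely needed, and what the paper supplies, is a representation of the \emph{difference itself} as a limit of jump-type sums with nonnegative coefficients. Concretely, for $\varphi\in D(Q)$ with $0\le\varphi\le1$ the paper introduces $Q_\varphi(f):=Q(\varphi f)-Q(\varphi f^2,\varphi)$ and shows, via the resolvent approximations $Q^{(\alpha)}$ and the representation of regular Dirichlet forms on discrete spaces by graphs $(b_\alpha,c_\alpha)$ (Lemma~\ref{lemma:resolvent approximation qphi}), that for $\varphi\le\psi$
\begin{equation*}
Q_{\psi}(f)-Q_{\varphi}(f)=\lim_{\alpha\to\infty}\frac12\sum_{x,y\in X}b_\alpha(x,y)\bigl(\psi(x)\psi(y)-\varphi(x)\varphi(y)\bigr)(f(x)-f(y))^2,
\end{equation*}
which is Markovian term by term precisely because $\psi(x)\psi(y)\ge\varphi(x)\varphi(y)$. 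The main part $Q^M=\sup_\varphi Q_\varphi$ is then identified with $Q^{(N)}_{(b,0)}$, and the killing part $Q^k$ with $Q^{(N)}_{(0,c)}$, by testing against compactly supported functions --- this uses crucially that $Q$ and $\QN$ agree when one argument lies in $C_c(X)$ (a consequence of Lemma~\ref{lemma:royden decomposition in dqe}), so that the exhaustion computing $Q^{(N)}_{(b,0)}$ can be run inside $Q$ itself. This gives the Markov property of $Q-\QN=(Q^M-Q^{(N)}_{(b,0)})+(Q^k-Q^{(N)}_{(0,c)})$ on $D(Q)\cap\ell^\infty(X)$, and a final Banach--Saks/lower-semicontinuity/convexity argument extends it to $D(\Qe)$. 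None of this machinery --- the forms $Q_\varphi$, the resolvent-approximation graphs, the main/killing decomposition, or the extension step --- appears in your proposal, so the proof has a genuine gap exactly where you anticipated the obstacle.
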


Before proving the theorem we note the following corollary, which states that the difference of the trace forms on the harmonic boundary is Markovian as well.

\begin{corollary}\label{corollary: q-qdn is markovian}
 Let $(b,c)$ be a transient graph over the discrete measure space $(X,m)$ with $\partial_h X \neq \emptyset$ and let $\mu$ be a harmonic measure. For all
 Dirichlet forms $Q$ on $\ell^2(X,m)$ with $\QD \geq Q \geq \QN$ the quadratic form $\Tr Q - q^{DN}$ with domain $D(\Tr Q)$ is a Markovian form on $L^2(\partial_h X,\mu)$.
\end{corollary}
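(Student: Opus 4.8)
The plan is to transfer the Markov property of $\Qe - \QNe$ on $X$, established in Theorem~\ref{theorem:q-qn is markovian}, to the boundary form $\Tr Q - q^{DN}$ by means of the trace map $\Tr$ and the harmonic extension $H$. First I would check that $\Tr Q - q^{DN}$ is well defined on $D(\Tr Q)$: since $\QD \geq Q \geq \QN$, Lemma~\ref{lemma:extended dirichlet spaces of qd and qn} gives $D(\Qe) \subseteq D(\QNe)$, whence $D(\Tr Q) = \Tr D(\Qe) \subseteq \Tr D(\QNe) = D(q^{DN})$. For $\varphi \in D(\Tr Q)$, writing $\varphi = \Tr f$ with $f \in D(\Qe)$, Lemma~\ref{proposition:properties of trace}~(c) and Lemma~\ref{lemma:royden decomposition in dqe} give $H_\varphi = f_h \in D(\Qe)$, so that by the definitions of $\Tr Q$ and $q^{DN}$
$$(\Tr Q - q^{DN})(\varphi) = \Qe(H_\varphi) - \QNe(H_\varphi) = (\Qe - \QNe)(H_\varphi).$$
Thus the boundary difference form is the $X$-side difference form evaluated at harmonic extensions.

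The crucial auxiliary observation is that $\Qe - \QNe$ depends only on the harmonic part of its argument: for any $g \in D(\Qe)$ with Royden decomposition $g = g_0 + g_h$, Lemma~\ref{lemma:royden decomposition in dqe} applied to $\Qe$ and to $\QNe$ yields $\Qe(g) = \ow{Q}(g_0) + \Qe(g_h)$ and $\QNe(g) = \ow{Q}(g_0) + \QNe(g_h)$, so the $\ow{Q}(g_0)$-terms cancel and $(\Qe - \QNe)(g) = (\Qe - \QNe)(g_h)$. Next, given a normal contraction $C$, I would verify $C \circ \varphi \in D(\Tr Q)$. Since $\varphi \in \Tr \ow{D}$, Lemma~\ref{proposition:properties of trace}~(d) gives $H_{C\circ\varphi} = (C\circ H_\varphi)_h$; because $\Qe$ inherits the Markov property (Proposition~\ref{proposition:pointwise lower semicontinuity extended space}) we have $C \circ H_\varphi \in D(\Qe)$, and hence $(C\circ H_\varphi)_h \in D(\Qe)$ by Lemma~\ref{lemma:royden decomposition in dqe}. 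Combining Lemma~\ref{proposition:properties of trace}~(a) with the identity $\Tr g = \Tr g_h$ (valid since $g_0 \in \Dzero = \ker \Tr$ by Lemma~\ref{proposition:properties of trace}~(b)) gives $\Tr H_{C\circ\varphi} = \Tr(C\circ H_\varphi) = C \circ \Tr H_\varphi = C\circ\varphi$, so that $C\circ\varphi = \Tr H_{C\circ\varphi} \in \Tr D(\Qe) = D(\Tr Q)$.

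Finally I would assemble the estimate
$$(\Tr Q - q^{DN})(C\circ\varphi) = (\Qe - \QNe)(H_{C\circ\varphi}) = (\Qe - \QNe)\big((C\circ H_\varphi)_h\big) = (\Qe - \QNe)(C\circ H_\varphi) \leq (\Qe - \QNe)(H_\varphi) = (\Tr Q - q^{DN})(\varphi),$$
where the third equality uses the harmonic-part observation of the second paragraph and the inequality is precisely the Markov property of $\Qe - \QNe$ from Theorem~\ref{theorem:q-qn is markovian} applied to $H_\varphi \in D(\Qe)$. Together with the membership $C\circ\varphi \in D(\Tr Q)$ this establishes the Markov property of $\Tr Q - q^{DN}$. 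The only genuinely substantial input is Theorem~\ref{theorem:q-qn is markovian} itself; the main technical point of the present argument is the bookkeeping that lets one replace $C \circ H_\varphi$ by its harmonic part $(C\circ H_\varphi)_h$ without changing the value of the difference form, which is exactly what the orthogonal decomposition in Lemma~\ref{lemma:royden decomposition in dqe} provides.
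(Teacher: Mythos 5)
Your proof is correct and follows essentially the same route as the paper: both rest on the Markov property of $\Qe - \QNe$ from Theorem~\ref{theorem:q-qn is markovian}, the identity $H_{C\circ\varphi} = (C\circ H_\varphi)_h$ from Lemma~\ref{proposition:properties of trace}~(d), and the orthogonality of the Royden decomposition from Lemma~\ref{lemma:royden decomposition in dqe}. The only cosmetic differences are that the paper gets $C\circ\varphi \in D(\Tr Q)$ immediately from Theorem~\ref{theorem:trace is a dirichlet form} instead of re-deriving it, and it merely drops the nonnegative term $\Qe((C\circ H_\varphi)_0) - \QNe((C\circ H_\varphi)_0)$ where you observe that it actually vanishes.
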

\begin{proof}
 Let $\varphi \in D(\Tr Q)$ and let $C:\IR \to \IR$ a normal contraction. Since $\Tr Q$ is a Dirichlet form in the wide sense, we have $C \circ \varphi \in D(\Tr Q)$.
 Using the Markov property of $\Qe - \QNe\geq 0$ and the same reasoning as in the proof of Theorem~\ref{theorem:trace is a dirichlet form}, we obtain
 \begin{align*}
  \Tr Q(\varphi)  - q^{DN} (\varphi) &= \Qe(H_\varphi) - \QNe(H_\varphi)\\
  &\geq \Qe(C\circ H_\varphi) - \QNe (C\circ H_\varphi) \\
  &= \Qe(H_{C \circ \varphi}) - \QNe (H_{C \circ \varphi}) + \Qe((C\circ H_\varphi)_0) - \QNe ((C\circ H_\varphi)_0)\\
  &\geq \Qe(H_{C \circ \varphi}) - \QNe (H_{C \circ \varphi})\\
  &= \Tr Q(C\circ \varphi) - q^{DN}(C\circ \varphi).
 \end{align*}
This finishes the proof.
\end{proof}

Let $(X,m)$ be a discrete measure space and let $Q$ be a Dirichlet form on $\ell^2(X,m)$. For $\varphi \in D(Q)$ with $0 \leq \varphi \leq 1$ we define the quadratic form
$$Q_\varphi:D(Q) \cap \ell^\infty(X) \to \IR,\, Q_\varphi(f):= Q(\varphi f) - Q(\varphi f^2,\varphi).$$
\begin{remarks}
 The definition of $Q_\varphi$ makes sense for functions $f \in \ell^\infty(X)$ that satisfy $\varphi f, \varphi f^2 \in D(Q)$. When considered on this larger domain, $Q_\varphi$ is a closable Markovian quadratic form on $C(X)$ equipped with the topology of pointwise convergence. Properties of its closure have been studied in \cite[Chapter~3]{Sch} in the context of maximal Silverstein extensions.
\end{remarks}

\begin{remarks}
If $Q$ is a restriction of $\ow{Q}$ for a graph $(b,c)$ such that $\varphi\equiv 1\in D(Q)$, then a simple calculation shows that $Q_\varphi(f)=\frac12 \sum_{x,y\in X}b(x,y)(f(x)-f(y))^2$
and $Q-Q_\varphi(f)=\sum_{x\in X} c(x)f(x)^2$ hold. Hence, this construction allows us to substract the killing part in an abstract way.
\end{remarks}

\begin{lemma} \label{lemma:resolvent approximation qphi}
 Let $(X,m)$ be a discrete measure space, let $Q$ be a Dirichlet form on $\ell^2(X,m)$. There exists a family of weighted graphs $(b_\alpha,c_\alpha)$, $\alpha>0$,
 over $X$ such that for all $f \in D(Q)\cap \ell^\infty(X)$ and all $\varphi \in D(Q)$ with $0\leq \varphi \leq 1$ we have
 $$Q_\varphi(f) = \lim_{\alpha\to\infty} \frac12\sum_{x,y\in X}b_\alpha(x,y) \varphi(x)\varphi(y)(f(x)-f(y))^2$$
 and
 $$Q(f) - Q_\varphi(f) = \lim_{\alpha\to\infty} \left[ \frac12\sum_{x,y\in X}b_\alpha(x,y) (1- \varphi(x)\varphi(y))(f(x)-f(y))^2 + \sum_{x \in X} c_\alpha(x) f(x)^2 \right].$$
\end{lemma}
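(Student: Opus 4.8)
The plan is to approximate $Q$ from below by \emph{bounded} Dirichlet forms via the resolvent, to represent each of these bounded forms as a genuine graph form by reading a weighted graph off its bounded generator, to carry out the algebraic manipulation defining $Q_\varphi$ at the level of these graph forms (where every expression is an honest convergent sum), and finally to pass to the limit.

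First I would introduce the Yosida approximation. Let $L$ be the non-negative self-adjoint operator on $\ltwo$ associated with $Q$ and let $G_\alpha := (L+\alpha)^{-1}$ be its resolvent. For $\alpha > 0$ set
$$Q^{(\alpha)}(f) := \alpha\langle f - \alpha G_\alpha f, f\rangle_2, \qquad f \in \ltwo,$$
which is the bounded, everywhere defined quadratic form with generator $L_\alpha := \alpha L G_\alpha = \alpha(I-\alpha G_\alpha)$. Since $\alpha G_\alpha$ is sub-Markovian, $Q^{(\alpha)}$ is itself a Dirichlet form. The standard approximation result for closed forms, see \cite[Theorem~1.3.4]{FOT}, yields that $\alpha \mapsto Q^{(\alpha)}(f)$ is non-decreasing with $\lim_{\alpha\to\infty}Q^{(\alpha)}(f) = Q(f)$ for all $f \in D(Q)$; polarizing this monotone limit gives $\lim_{\alpha\to\infty}Q^{(\alpha)}(u,v) = Q(u,v)$ for all $u,v \in D(Q)$.

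Next I would extract a weighted graph from each $Q^{(\alpha)}$. Writing $\ell_\alpha(x,y)$ for the matrix of the bounded operator $L_\alpha$, so that $L_\alpha f(x)=\sum_y\ell_\alpha(x,y)f(y)$, I set
$$b_\alpha(x,y) := -m(x)\ell_\alpha(x,y)\ (x\neq y),\quad b_\alpha(x,x):=0,\qquad c_\alpha(x) := m(x)\sum_{y}\ell_\alpha(x,y),$$
where $\sum_y\ell_\alpha(x,y)=(L_\alpha\mathbf 1)(x)$ with $\mathbf 1$ the constant function $1$. Positivity preservation of $e^{-tL_\alpha}$ forces $\ell_\alpha(x,y)\le 0$ for $x\neq y$, so $b_\alpha\ge 0$; symmetry of $L_\alpha$ with respect to $\langle\cdot,\cdot\rangle_2$ gives $\ell_\alpha(x,y)m(x)=\ell_\alpha(y,x)m(y)$, whence $b_\alpha$ is symmetric; sub-Markovianity gives $L_\alpha\mathbf 1\ge 0$, so $c_\alpha\ge 0$; and boundedness of $L_\alpha$ yields $\sum_y b_\alpha(x,y)\le m(x)\ell_\alpha(x,x)<\infty$. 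Thus $(b_\alpha,c_\alpha)$ is a weighted graph over $X$, and a direct rearrangement of $\langle L_\alpha f,f\rangle_2$ produces the Beurling--Deny representation $Q^{(\alpha)}(f)=\frac12\sum_{x,y}b_\alpha(x,y)(f(x)-f(y))^2+\sum_x c_\alpha(x)f(x)^2$. I expect this step --- that a bounded Dirichlet form on a discrete space is purely of jump-plus-killing type, with the graph read off from the generator --- to be the conceptual heart and main obstacle of the argument; the rest is bookkeeping.

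Finally I would run the algebra at level $\alpha$ and pass to the limit. For the graph form, the elementary identity $(as-bt)^2-(as^2-bt^2)(a-b)=ab(s-t)^2$ shows that the killing contributions cancel and
$$(Q^{(\alpha)})_\varphi(f)=Q^{(\alpha)}(\varphi f)-Q^{(\alpha)}(\varphi f^2,\varphi)=\frac12\sum_{x,y}b_\alpha(x,y)\varphi(x)\varphi(y)(f(x)-f(y))^2,$$
and consequently
$$Q^{(\alpha)}(f)-(Q^{(\alpha)})_\varphi(f)=\frac12\sum_{x,y}b_\alpha(x,y)(1-\varphi(x)\varphi(y))(f(x)-f(y))^2+\sum_x c_\alpha(x)f(x)^2.$$
Since $\varphi\in D(Q)$ with $0\le\varphi\le 1$ and $f\in D(Q)\cap\ell^\infty(X)$, the algebra property of $D(Q)\cap\ell^\infty(X)$ (Proposition~\ref{proposition:algebraic and order properties of domains} together with \cite[Theorem~1.4.2]{FOT}) gives $\varphi f,\varphi f^2\in D(Q)$, so the convergence from the first step yields $Q^{(\alpha)}(\varphi f)\to Q(\varphi f)$ and $Q^{(\alpha)}(\varphi f^2,\varphi)\to Q(\varphi f^2,\varphi)$, that is, $(Q^{(\alpha)})_\varphi(f)\to Q_\varphi(f)$. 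Combining this with $Q^{(\alpha)}(f)\to Q(f)$ gives precisely the two displayed limits, completing the proof.
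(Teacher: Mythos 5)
Your proposal is correct and follows essentially the same route as the paper: resolvent (Yosida) approximation $Q^{(\alpha)}(f)=\alpha\langle(I-\alpha G_\alpha)f,f\rangle_2$, representation of each bounded approximating Dirichlet form as a graph form, the algebraic cancellation identity for $Q_\varphi$ at the level of graph forms, and passage to the limit using that $\varphi f,\varphi f^2\in D(Q)$ by the algebra property. The only difference is one of packaging: where the paper invokes regularity of $Q^{(\alpha)}$ and cites \cite[Theorem~7]{stoch} for the existence of the graph $(b_\alpha,c_\alpha)$, you reprove that representation by hand in the bounded-generator case (reading $b_\alpha,c_\alpha$ off the matrix of $L_\alpha$ via positivity preservation and sub-Markovianity), and you spell out the ``straightforward computation'' that the paper leaves to the reader.
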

\begin{proof}
 Let $G_\alpha,\alpha > 0$, denote the resolvent of the form $Q$ and let
 $$Q^{(\alpha)}:\ell^2(X,m) \to \IR,\, Q^{(\alpha)}(f) =  \alpha \as{(I-\alpha G_\alpha)f,f}$$
 be the associated approximating form. It is well-known, see e.g. \cite[Section~1.4]{FOT}, that $Q^{(\alpha)}$ is a continuous Dirichlet form on $\ell^2(X,m)$ that satisfies
 $$Q(f,g) = \lim_{\alpha \to \infty} Q^{(\alpha)}(f,g), $$
 for all $f,g \in D(Q)$. In particular, $Q^{(\alpha)}$ is regular and so \cite[Theorem~7]{stoch} implies the existence of a weighted graph $(b_\alpha,c_\alpha)$ over $X$ such that for all $f \in \ell^2(X,m)$ the value $Q^{(\alpha)}(f)$ is given by
 $$Q^{(\alpha)}(f) = \frac12\sum_{x,y\in X} b_\alpha(x,y)(f(x) - f(y))^2 + \sum_{x \in X} c_\alpha(x) f(x)^2.$$
 With this at hand, a  straightforward computation that uses the definition of $Q_\varphi$ proves the claim.
 \end{proof}
The next lemma is a special case of \cite[Lemma~3.39]{Sch} and shows some properties of $Q_\varphi$.  It is a direct consequence of the previous lemma.
\begin{lemma}\label{lemma:properties qphi}
Let $(X,m)$ be a discrete measure space, let $Q$ be a Dirichlet form on $\ell^2(X,m)$ and let $\varphi,\psi \in D(Q)$ with $0\leq \varphi,\psi \leq 1$.
\begin{itemize}
 \item[(a)] For all $f \in D(Q)\cap \ell^\infty(X)$ we have $Q_\varphi(f) \leq Q(f)$.
 \item[(b)] If $\psi \leq \varphi$, then $Q_\psi(f) \leq Q_\varphi(f)$ for all $f \in D(Q)\cap \ell^\infty(X)$.
 \item[(c)] $Q_\varphi$ is Markovian, i.e. for all $f \in D(Q)\cap \ell^\infty(X)$ and all normal contractions $C:\IR\to\IR$ we have
 $$Q_\varphi(C \circ f) \leq Q_\varphi(f).$$
\end{itemize}
\end{lemma}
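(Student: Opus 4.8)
The plan is to derive all three assertions directly from the two approximation formulas in Lemma~\ref{lemma:resolvent approximation qphi}, reducing each inequality to a summand-wise comparison that survives passage to the limit $\alpha \to \infty$. Throughout I write
$$S_\varphi^{(\alpha)}(f) := \frac12\sum_{x,y\in X} b_\alpha(x,y)\varphi(x)\varphi(y)(f(x)-f(y))^2,$$
so that $Q_\varphi(f) = \lim_{\alpha\to\infty} S_\varphi^{(\alpha)}(f)$, and I keep in mind that $b_\alpha \geq 0$, $c_\alpha \geq 0$, and that each such limit exists by the cited lemma.

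For (a) I would use the second formula, which writes $Q(f) - Q_\varphi(f)$ as the limit of the bracketed expression. Since $0 \leq \varphi \leq 1$ gives $0 \leq \varphi(x)\varphi(y) \leq 1$, the factor $1 - \varphi(x)\varphi(y)$ is nonnegative; together with $b_\alpha, c_\alpha \geq 0$ this makes the whole bracket a sum of nonnegative terms, hence nonnegative for every $\alpha$. Letting $\alpha \to \infty$ yields $Q(f) - Q_\varphi(f) \geq 0$. For (b) I would instead subtract the first formula for $\psi$ from that for $\varphi$: the difference $S_\varphi^{(\alpha)}(f) - S_\psi^{(\alpha)}(f)$ has summands weighted by $b_\alpha(x,y)(\varphi(x)\varphi(y) - \psi(x)\psi(y))$, and the hypothesis $0 \leq \psi \leq \varphi \leq 1$ forces $\psi(x)\psi(y) \leq \varphi(x)\varphi(y)$ pointwise. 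Thus $S_\varphi^{(\alpha)}(f) \geq S_\psi^{(\alpha)}(f)$ for each $\alpha$, and passing to the limit gives $Q_\varphi(f) \geq Q_\psi(f)$.

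For (c) I would first record that $C \circ f \in D(Q) \cap \ell^\infty(X)$: membership in $D(Q)$ is the Markov property of the Dirichlet form $Q$, and boundedness follows from $|C(r)| \leq |r|$, so $Q_\varphi(C\circ f)$ is defined and the first approximation formula applies to it. The contraction estimate $(C(f(x)) - C(f(y)))^2 \leq (f(x)-f(y))^2$ then gives, summand by summand against the nonnegative weights $b_\alpha(x,y)\varphi(x)\varphi(y)$, the inequality $S_\varphi^{(\alpha)}(C\circ f) \leq S_\varphi^{(\alpha)}(f)$; letting $\alpha \to \infty$ completes the proof.

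I do not expect a genuine obstacle here, which is consistent with the claim that the lemma is a direct consequence of its predecessor. The only points requiring care are purely bookkeeping: one must invoke Lemma~\ref{lemma:resolvent approximation qphi} to guarantee that the relevant limits exist, so that the summand-wise inequalities established at finite $\alpha$ are inherited by the limits, and in (c) one must check at the outset that $C \circ f$ lies in the domain on which $Q_\varphi$ is defined.
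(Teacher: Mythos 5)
Your proof is correct and follows exactly the route the paper intends: the paper states that Lemma~\ref{lemma:properties qphi} is a direct consequence of Lemma~\ref{lemma:resolvent approximation qphi}, and your summand-wise comparisons (nonnegativity of the bracket for (a), monotonicity of $\varphi(x)\varphi(y)$ for (b), the contraction estimate for (c)) together with passage to the limit $\alpha\to\infty$ are precisely that deduction. The domain check for $C\circ f$ in (c) is the right bookkeeping point to flag, and your justification via the Markov property of $Q$ is the correct one.
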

For a Dirichlet form $Q$ on $\ell^2(X,m)$ the \emph{main part}
$$Q^M(f):= \sup\{Q_\varphi(f) \mid \varphi \in D(Q) \text{ with } 0 \leq \varphi \leq 1\},$$
and the \emph{killing part}
$$Q^k(f) := Q(f) - Q^M(f), $$
 induce maps $Q^M:D(Q)\cap\ell^\infty(X) \to [0,\infty)$ and $Q^k:D(Q)\cap\ell^\infty(X) \to [0,\infty)$. The previous lemma implies the following important observation for $Q^M$ and
 $Q^k$.
 \begin{lemma}
  Let $(X,m)$ be a discrete measure space and let $Q$ be a Dirichlet form on $\ell^2(X,m)$. Then, the main part $Q^M$ and the killing part $Q^k$ are Markovian quadratic forms.
 \end{lemma}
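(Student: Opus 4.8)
The plan is to establish the Markov property for $Q^M$ and $Q^k$ separately, exploiting the approximating graphs $(b_\alpha,c_\alpha)$ supplied by Lemma~\ref{lemma:resolvent approximation qphi} together with the monotonicity and Markovianity of the forms $Q_\varphi$ from Lemma~\ref{lemma:properties qphi}. First I would treat $Q^M$. Since $Q^M(f) = \sup_\varphi Q_\varphi(f)$ is a pointwise supremum of the Markovian forms $Q_\varphi$ (Lemma~\ref{lemma:properties qphi}(c)), the key point is that a supremum of Markovian forms over a directed family is again Markovian. Concretely, for a normal contraction $C$ and any admissible $\varphi$ we have $Q_\varphi(C\circ f)\leq Q_\varphi(f)\leq Q^M(f)$, so taking the supremum over $\varphi$ on the left yields $Q^M(C\circ f)\leq Q^M(f)$ directly. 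Of course, one must first check that $C\circ f\in D(Q)\cap\ell^\infty(X)$ whenever $f\in D(Q)\cap\ell^\infty(X)$, which is immediate because $Q$ is a Dirichlet form and $C\circ f$ is bounded.

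The harder part is the killing part $Q^k = Q - Q^M$, because a difference of Markovian forms need not be Markovian in general; here one genuinely needs the explicit structure coming from the approximating graphs. The strategy is to identify $Q^k$ as a pointwise limit of honest killing terms. Using Lemma~\ref{lemma:resolvent approximation qphi}, for fixed $f$ and $\varphi$ the quantity $Q(f)-Q_\varphi(f)$ is the limit over $\alpha$ of
\begin{equation*}
\frac12\sum_{x,y\in X}b_\alpha(x,y)(1-\varphi(x)\varphi(y))(f(x)-f(y))^2 + \sum_{x\in X}c_\alpha(x)f(x)^2 .
\end{equation*}
By Lemma~\ref{lemma:properties qphi}(b) the net $Q_\varphi(f)$ is monotone increasing as $\varphi\uparrow 1$, so choosing a sequence $\varphi_n\uparrow 1$ in $D(Q)$ (for instance suitable truncations of a fixed strictly positive $\ell^2$ function, or resolvent approximations of $1$) gives $Q^k(f)=\lim_n\bigl(Q(f)-Q_{\varphi_n}(f)\bigr)$. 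The point is that as $\varphi_n\to 1$ pointwise the factor $1-\varphi_n(x)\varphi_n(y)$ kills the jump contribution in the limit, so that $Q^k$ is governed only by the killing terms $c_\alpha$, which manifestly satisfy $\sum_x c_\alpha(x)(C\circ f)(x)^2\leq\sum_x c_\alpha(x)f(x)^2$ for every normal contraction $C$ (since $|C\circ f|\leq|f|$). Passing this inequality through the limit in $\alpha$ and then the limit in $n$ would yield $Q^k(C\circ f)\leq Q^k(f)$.

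The main obstacle I anticipate is the careful interchange of the two limits (in $\alpha$ and in $n$) and the justification that the jump part really vanishes in the limit, so that $Q^k$ depends only on the killing terms. One must control the non-negative expressions uniformly enough to pass the normal-contraction inequality through both limits; monotone convergence together with the already-established nonnegativity of $Q^k$ (which follows from Lemma~\ref{lemma:properties qphi}(a), giving $Q_\varphi\leq Q$ and hence $Q^M\leq Q$) should make this tractable. An alternative, cleaner route that avoids delicate limit-interchange is to argue entirely at the level of the $\varphi_n$: since $Q^M(f)=\sup_n Q_{\varphi_n}(f)$ along the directed family, one writes $Q^k = \inf_n (Q - Q_{\varphi_n})$ and checks the Markov inequality for each $Q-Q_{\varphi_n}$ before taking the infimum. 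In either case the essential input is the explicit graph representation of Lemma~\ref{lemma:resolvent approximation qphi}, which cleanly separates the jump part (carried by $Q_\varphi$) from the killing part and thereby makes both $Q^M$ and $Q^k$ amenable to the normal-contraction estimate.
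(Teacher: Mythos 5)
Your treatment of $Q^M$ is correct and coincides with the paper's: for every admissible $\varphi$ one has $Q_\varphi(C\circ f)\leq Q_\varphi(f)\leq Q^M(f)$ by Lemma~\ref{lemma:properties qphi}~(c), and taking the supremum over $\varphi$ on the left gives the Markov inequality. Note, however, that the lemma also asserts that $Q^M$ and $Q^k$ are \emph{quadratic forms}, and this is not automatic: a pointwise supremum of quadratic forms need not satisfy the parallelogram identity. The paper settles this point via the monotonicity of $\varphi\mapsto Q_\varphi$ (Lemma~\ref{lemma:properties qphi}~(b)) together with the lattice property of $D(Q)\cap\ell^\infty(X)$, which makes the family $\{Q_\varphi\}$ upward directed, so that the supremum is a monotone limit of quadratic forms and hence itself a quadratic form. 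Your proposal omits this part of the statement entirely.

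For $Q^k$ your primary strategy has a genuine gap. You claim that choosing a sequence $\varphi_n\uparrow 1$ (truncations of a positive $\ell^2$ function, or resolvent approximations of $1$) yields $Q^k(f)=\lim_n\bigl(Q(f)-Q_{\varphi_n}(f)\bigr)$. This presupposes $\lim_n Q_{\varphi_n}(f)=Q^M(f)$, which is not justified: monotonicity only gives $\lim_n Q_{\varphi_n}(f)\leq Q^M(f)$, and no continuity of $\varphi\mapsto Q_\varphi(f)$ along pointwise increasing sequences is available --- this is exactly why the paper later proves Lemma~\ref{lemma: choosing increasing seq}, which constructs a sequence adapted to finitely many given functions rather than using a universal one. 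The further claim that the jump contribution $b_\alpha(x,y)(1-\varphi_n(x)\varphi_n(y))$ vanishes in the double limit, so that $Q^k$ is ``governed only by the killing terms'', is precisely the delicate interchange you yourself flag, and it is also unnecessary. Your ``alternative, cleaner route'' is the correct one and is in fact the paper's proof: by the second formula of Lemma~\ref{lemma:resolvent approximation qphi}, for each \emph{fixed} admissible $\varphi$ the form $Q-Q_\varphi$ is a pointwise limit of forms with nonnegative jump weights $b_\alpha(x,y)(1-\varphi(x)\varphi(y))$ and killing terms $c_\alpha$, hence Markovian; since $Q^k=\inf_\varphi\,(Q-Q_\varphi)$ with the infimum taken over \emph{all} admissible $\varphi$ (so no approximating sequence is needed), one gets $Q^k(C\circ f)\leq (Q-Q_\varphi)(C\circ f)\leq (Q-Q_\varphi)(f)$ for every $\varphi$, and taking the infimum over $\varphi$ on the right yields $Q^k(C\circ f)\leq Q^k(f)$. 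Routed this way, no limits in $\varphi$ and no interchanges occur at all.
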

\begin{proof}

 The monotonicity of the forms $Q_\varphi$ in the parameter $\varphi$, Lemma~\ref{lemma:properties qphi}~(b), implies that $Q^M$  is a quadratic form.
 Thus, $Q^k$ is a quadratic form as well. Moreover, $Q^M$ is Markovian by Lemma~\ref{lemma:properties qphi}~(c) and the Markov property of $Q^k$ follows from Lemma~\ref{lemma:resolvent approximation qphi}.
\end{proof}

Let $(b,c)$ be a graph  over the discrete measure space $(X,m)$. Recall that we denote by $Q^{(N)}_{(b,0)}$  the Neumann form of the graph $(b,0)$ and by $Q^{(N)}_{(0,c)}$ the Neumann form of the (totally disconnected) graph $(0,c)$. The next lemma shows that $Q^{(N)}_{(b,0)}$ (restricted to $D(\QN)\cap \ell^\infty(X)$) is the main part of $\QN$ and that  $Q^{(N)}_{(0,c)}$ (restricted to $D(\QN)\cap \ell^\infty(X)$) is the killing part of $\QN$.
\begin{lemma} \label{lemma:formula for QN  with exhaustion}
 Let $(b,c)$ be a graph over the discrete measure space $(X,m)$ and let $(\varphi_n)$ be an increasing sequence in $D(\QN)$ with $0\leq \varphi_n \leq 1$ that  converges pointwise to the constant function $1$. For all $f \in D(\QN) \cap \ell^\infty(X)$ the identity
 $$Q^{(N)}_{(b,0)}(f) = \lim_{n \to \infty} Q^{(N)}_{\varphi_{n}}(f) = \lim_{n \to \infty} \left[ Q^{(N)} (\varphi_n f) - \QN(\varphi_n f^2,\varphi_n)\right]  $$
 holds.
\end{lemma}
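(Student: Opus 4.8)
The plan is to compute $Q^{(N)}_{\varphi_n}(f)$ explicitly for each fixed $n$ and then pass to the limit by monotone convergence. Since $\QN$ is the restriction of $\ow{Q}$ to $\ow{D}\cap\ell^2(X,m)$, and since $f\in D(\QN)\cap\ell^\infty(X)$ together with $\varphi_n\in D(\QN)$ satisfying $0\leq\varphi_n\leq 1$ guarantees, via the algebra property in Proposition~\ref{proposition:algebraic and order properties of domains} and the boundedness of $f$, that $\varphi_n f$ and $\varphi_n f^2$ again belong to $D(\QN)$, all terms in $Q^{(N)}_{\varphi_n}(f)=\QN(\varphi_n f)-\QN(\varphi_n f^2,\varphi_n)$ are well-defined and may be written out explicitly using the edge weight $b$ and the killing term $c$.

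The key algebraic observation comes next. Writing both $\QN(\varphi_n f)=\ow{Q}(\varphi_n f)$ and $\QN(\varphi_n f^2,\varphi_n)=\ow{Q}(\varphi_n f^2,\varphi_n)$ in terms of $b$ and $c$, one sees that the two killing contributions are each equal to $\sum_{x\in X}c(x)\varphi_n(x)^2 f(x)^2$ and hence cancel in the difference. This cancellation is what makes the limit the killing-free form $Q^{(N)}_{(b,0)}$ rather than $\QN$ itself. For the remaining $b$-part, a pointwise expansion of the summand, writing $\alpha=\varphi_n(x)$, $\beta=\varphi_n(y)$, $s=f(x)$, $t=f(y)$, gives the identity
$$(\alpha s-\beta t)^2-(\alpha s^2-\beta t^2)(\alpha-\beta)=\alpha\beta(s-t)^2,$$
so that
$$Q^{(N)}_{\varphi_n}(f)=\frac12\sum_{x,y\in X}b(x,y)\,\varphi_n(x)\varphi_n(y)\,(f(x)-f(y))^2.$$
This recovers by direct calculation the formula established abstractly in Lemma~\ref{lemma:resolvent approximation qphi}.

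Finally I would pass to the limit. Since $(\varphi_n)$ is increasing with $0\leq\varphi_n\leq 1$ and converges pointwise to the constant function $1$, for each fixed pair $x,y$ the products $\varphi_n(x)\varphi_n(y)$ increase to $1$; hence the nonnegative summands $b(x,y)\varphi_n(x)\varphi_n(y)(f(x)-f(y))^2$ increase pointwise to $b(x,y)(f(x)-f(y))^2$. The monotone convergence theorem then yields
$$\lim_{n\to\infty}Q^{(N)}_{\varphi_n}(f)=\frac12\sum_{x,y\in X}b(x,y)(f(x)-f(y))^2=Q^{(N)}_{(b,0)}(f),$$
the limit being finite because $\frac12\sum_{x,y}b(x,y)(f(x)-f(y))^2\leq\ow{Q}(f)<\infty$ for $f\in D(\QN)$. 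I do not anticipate a genuine obstacle: the only points demanding care are verifying the cancellation of the killing terms and justifying the interchange of limit and summation, both of which are settled by the explicit formula and monotone convergence.
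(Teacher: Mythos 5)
Your proposal is correct and takes essentially the same approach as the paper: the paper's proof likewise reduces everything to the identity
$$\QN(\varphi_n f) - \QN(\varphi_n f^2,\varphi_n) = \frac12\sum_{x,y \in X} b(x,y)\, \varphi_n(x)\varphi_n(y)\, (f(x)-f(y))^2$$
(stated there as ``a direct computation'', whose details --- the cancellation of the killing terms and the pointwise algebraic identity --- you write out explicitly) and then concludes by the monotone convergence theorem, exactly as you do.
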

\begin{proof}
For $f \in D(\QN) \cap \ell^\infty(X)$ and $\varphi \in D(\QN)$ with $0 \leq \varphi \leq 1$  a direct computation shows
$$Q^{(N)} (\varphi f) - \QN(\varphi f^2,\varphi) = \frac12\sum_{x,y \in X} b(x,y) \varphi(x) \varphi(y) (f(x)- f(y))^2.$$
Hence, we obtain the statement after applying the monotone convergence theorem.
\end{proof}
\begin{lemma}\label{lemma: choosing increasing seq}
 Let $(b,c)$ be a graph over the discrete measure space $(X,m)$ and let $Q$ be a Dirichlet form on $\ell^2(X,m)$ with $\QD \geq Q \geq \QN$. Let $f,g\in D(Q)\cap\ell^\infty(X)$.
 Then, there exists a sequence $(\varphi_n)$ in $D(Q)\cap\ell^\infty(X)$ such that $0\leq \varphi_n\leq 1$ for every $n\in\IN$, $(\varphi_n)$ is pointwise monotonically increasing
 and pointwise convergent to 1, and $$Q^M(f)=\lim_{n\to\infty} Q_{\varphi_n}(f),\quad Q^M(g)=\lim_{n\to\infty} Q_{\varphi_n}(g)$$ hold.
\end{lemma}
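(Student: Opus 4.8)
The plan is to manufacture a single pointwise increasing sequence that simultaneously realizes the two suprema defining $Q^M(f)$ and $Q^M(g)$, exploiting the monotonicity $\varphi \mapsto Q_\varphi$ from Lemma~\ref{lemma:properties qphi}~(b) together with the lattice structure of $D(Q)$. First I would record that both suprema are finite: by Lemma~\ref{lemma:properties qphi}~(a) we have $Q_\varphi(f) \leq Q(f) < \infty$ for every admissible $\varphi$, whence $Q^M(f) \leq Q(f)$ and likewise $Q^M(g) \leq Q(g)$. Consequently, directly from the definition of $Q^M$ as a supremum of real numbers, I can select sequences $(\psi_n^f)$ and $(\psi_n^g)$ in $D(Q)$ with $0 \leq \psi_n^f,\psi_n^g \leq 1$ such that $Q_{\psi_n^f}(f) \to Q^M(f)$ and $Q_{\psi_n^g}(g) \to Q^M(g)$.

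To force convergence to the constant function $1$ I would additionally fix an exhaustion $(\chi_n)$ of $X$, namely the indicator functions of an increasing sequence of finite sets covering $X$. These satisfy $0 \leq \chi_n \leq 1$, increase pointwise to $1$, and belong to $C_c(X) \subseteq D(\QD) \subseteq D(Q)$, the inclusion $C_c(X)\subseteq D(\QD)$ being immediate from $D(\QD) = \overline{C_c(X)}^{\|\cdot\|_\NeuDir}$ and the inclusion $D(\QD)\subseteq D(Q)$ from $\QD \geq Q$. I then set
$$\varphi_n := \chi_n \vee \psi_1^f \vee \psi_1^g \vee \cdots \vee \psi_n^f \vee \psi_n^g.$$
Since $D(Q)$ is a lattice by Proposition~\ref{proposition:algebraic and order properties of domains}, each $\varphi_n$ lies in $D(Q)\cap\ell^\infty(X)$, and obviously $0 \leq \varphi_n \leq 1$. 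Monotonicity is clean from the recursive identity $\varphi_{n+1} = \varphi_n \vee \chi_{n+1} \vee \psi_{n+1}^f \vee \psi_{n+1}^g \geq \varphi_n$ (using $\chi_{n+1}\geq\chi_n$), and $\varphi_n \geq \chi_n$ forces $\varphi_n \to 1$ pointwise.

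It remains to verify the two limit identities, which is the conceptual heart of the matter. By Lemma~\ref{lemma:properties qphi}~(b) the monotonicity of $(\varphi_n)$ makes $(Q_{\varphi_n}(f))_n$ nondecreasing and bounded above by $Q^M(f)$, so its limit exists and is at most $Q^M(f)$; on the other hand $\varphi_n \geq \psi_n^f$ gives $Q_{\varphi_n}(f) \geq Q_{\psi_n^f}(f)$, whence $\lim_n Q_{\varphi_n}(f) \geq \lim_n Q_{\psi_n^f}(f) = Q^M(f)$. Sandwiching yields $\lim_n Q_{\varphi_n}(f) = Q^M(f)$, and the identical argument with $\psi_n^g$ in place of $\psi_n^f$ gives $\lim_n Q_{\varphi_n}(g) = Q^M(g)$.

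The only genuine obstacle is the bookkeeping required to make one monotone sequence perform three tasks at once — approximate $Q^M(f)$, approximate $Q^M(g)$, and climb to $1$ — and the pointwise-maximum construction dispatches all three precisely because $Q_\varphi$ is monotone in $\varphi$ and everywhere dominated by $Q^M$. No closability, resolvent approximation, or convergence-of-forms input is needed at this step; everything rests on Lemma~\ref{lemma:properties qphi} and the lattice property in Proposition~\ref{proposition:algebraic and order properties of domains}.
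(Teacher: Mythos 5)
Your proof is correct and follows essentially the same route as the paper: both merge the two approximating sequences via pointwise maxima, using the lattice property of $D(Q)\cap\ell^\infty(X)$ and the monotonicity of $\varphi \mapsto Q_\varphi$ from Lemma~\ref{lemma:properties qphi}~(b). In fact, your explicit use of the exhaustion $(\chi_n)$ and the sandwich argument fills in a detail the paper only asserts, namely that the approximating sequences ``can be chosen'' monotone and pointwise convergent to $1$.
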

\begin{proof}
 Let $f,g\in D(Q)\cap\ell^\infty(X)$. Then, by definition of $Q^M$, there are sequences $(\varphi_n^{(1)})$ and $(\varphi_n^{(2)})$ in $D(Q)\cap\ell^\infty(X)$,
 $0\leq \varphi_n^{(i)}\leq 1$, $i=1,2$, such that
 $Q^M(f)=\lim_{n\to\infty} Q_{\varphi_n^{(1)}}(f)$ and $Q^M(g)=\lim_{n\to\infty} Q_{\varphi_n^{(2)}}(g)$ hold. By Lemma \ref{lemma:properties qphi} (b) these sequences can be chosen to be pointwise
 monotonically increasing and pointwise convergent to $1$.
 We define $$\varphi_n:=\varphi_n^{(1)}\vee \varphi_n^{(2)}$$ and the result follows by Lemma \ref{lemma:properties qphi} (b) and since
 $D(Q)\cap\ell^\infty(X)$ is a lattice by Proposition~\ref{proposition:algebraic and order properties of domains}.
\end{proof}

\begin{lemma} \label{lemma:computing qk and qm for compactly supported functions}
 Let $(b,c)$ be a graph over the discrete measure space $(X,m)$ and let $Q$ be a Dirichlet form on $\ell^2(X,m)$ with $\QD \geq Q \geq \QN$.
 For all $f \in D(Q) \cap \ell^\infty(X)$ and $g \in C_c(X)$ we have %
 $$Q^M(f,g) =  Q^{(N)}_{(b,0)}(f,g) \text{ and } Q^k(f,g) =  Q^{(N)}_{(0,c)}(f,g).$$
\end{lemma}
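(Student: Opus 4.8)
The plan is to prove the two identities essentially independently, reducing the killing part to the main part. First I would record the basic compatibility statement that for every $f \in D(Q) \cap \ell^\infty(X)$ and every $h \in \Dzero$ (in particular for $h \in C_c(X)$) one has $Q(f,h) = \ow{Q}(f,h)$. This is exactly the ``in particular'' part of Lemma~\ref{lemma:royden decomposition in dqe}, since $D(Q) \subseteq D(\Qe)$ and $\Dzero = D(\QDe)$, and $Q$ agrees with $\Qe$ on $D(Q)$. Granting the identity $Q^M(f,g) = Q^{(N)}_{(b,0)}(f,g)$, the killing part is then immediate: since $g \in C_c(X)$, the value $\ow{Q}(f,g)$ is finite and splits by definition of $\ow{Q}$ as $\ow{Q}(f,g) = Q^{(N)}_{(b,0)}(f,g) + Q^{(N)}_{(0,c)}(f,g)$, whence $Q^k(f,g) = Q(f,g) - Q^M(f,g) = \ow{Q}(f,g) - Q^{(N)}_{(b,0)}(f,g) = Q^{(N)}_{(0,c)}(f,g)$. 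So everything reduces to the claim about $Q^M$.

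For the main part, the key step is an explicit bilinear formula for $Q_\varphi$. Polarizing the quadratic form $Q_\varphi$ gives $Q_\varphi(f,g) = Q(\varphi f, \varphi g) - Q(\varphi f g, \varphi)$. Here I would exploit that $g \in C_c(X)$: then $\varphi g$ and $\varphi f g$ also lie in $C_c(X) \subseteq \Dzero$ (using that $D(Q) \cap \ell^\infty(X)$ is an algebra, Proposition~\ref{proposition:algebraic and order properties of domains}), so the compatibility statement above replaces $Q$ by $\ow{Q}$ in both terms. A direct computation in which the killing contributions cancel then yields
$$Q_\varphi(f,g) = \frac12 \sum_{x,y \in X} b(x,y)\, \varphi(x)\varphi(y)\, (f(x)-f(y))(g(x)-g(y)).$$
Letting $\varphi$ increase to $1$, dominated convergence (the sum is effectively finite, as $g$ has finite support and $f$ is bounded) shows that the right-hand side tends to $Q^{(N)}_{(b,0)}(f,g)$.

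It remains to interchange this limit with the supremum defining $Q^M$, and this is the one point requiring care. I would invoke Lemma~\ref{lemma: choosing increasing seq} for the pair $(f, f+g)$ to produce a single increasing sequence $(\varphi_n)$ with $\varphi_n \to 1$ pointwise that simultaneously realizes $Q^M(f) = \lim_n Q_{\varphi_n}(f)$ and $Q^M(f+g) = \lim_n Q_{\varphi_n}(f+g)$. For $g$ itself no choice is needed: the explicit formula shows $Q_{\varphi_n}(g)$ increases monotonically to $Q^{(N)}_{(b,0)}(g) = Q^M(g)$. Since $Q^M$ and each $Q_{\varphi_n}$ are genuine quadratic forms, polarization gives
$$Q^M(f,g) = \tfrac12\big(Q^M(f+g) - Q^M(f) - Q^M(g)\big) = \lim_{n\to\infty} Q_{\varphi_n}(f,g) = Q^{(N)}_{(b,0)}(f,g),$$
which is the desired identity.

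The main obstacle is precisely this last interchange: $Q^M$ is defined as a pointwise supremum over $\varphi$, so one cannot polarize naively, and a common maximizing sequence for the functions $f$, $g$ and $f+g$ must be produced. This is exactly what Lemma~\ref{lemma: choosing increasing seq}, together with the monotonicity in Lemma~\ref{lemma:properties qphi}~(b), is designed to deliver. The remaining ingredients — the polarization of $Q_\varphi$ and the cancellation of the killing term in the passage from $Q$ to $\ow{Q}$ — are routine.
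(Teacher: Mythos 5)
Your proof is correct and follows essentially the same route as the paper's: reduce the killing-part identity to the main-part identity via $Q(f,g) = \QN(f,g)$ (Lemma~\ref{lemma:royden decomposition in dqe}), produce a common increasing sequence via Lemma~\ref{lemma: choosing increasing seq}, and evaluate $Q_{\varphi_n}(f,g)$ explicitly using the compact support of $\varphi_n g$ and $\varphi_n f g$ (the bilinear version of Lemma~\ref{lemma:formula for QN  with exhaustion}). Your explicit treatment of the polarization step --- applying Lemma~\ref{lemma: choosing increasing seq} to the pair $(f, f+g)$ and observing that $g$ itself requires no choice of sequence since any increasing-to-$1$ sequence realizes $Q^M(g) = Q^{(N)}_{(b,0)}(g)$ --- is a careful spelling-out of what the paper compresses into the phrase ``(and the polarization identity)''.
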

\begin{proof}
 Lemma~\ref{lemma:royden decomposition in dqe} shows that for $f\in D(Q) \cap \ell^\infty(X)$ and $g \in C_c(X)$ we have
 $$Q(f,g) = \QN(f,g).$$
 Therefore, it suffices to prove the identity $Q^M(f,g) =  Q^{(N)}_{(b,0)}(f,g)$. By Lemma \ref{lemma: choosing increasing seq} (and the polarization identity) we choose an increasing, pointwise to $1$ convergent
 sequence $(\varphi_n)$ in $D(Q)$ with $0 \leq \varphi_n \leq 1$ and
 $$Q^M(f,g) = \lim_{n\to \infty}Q_{\varphi_n}(f,g).$$
 With this at hand, Lemma~\ref{lemma:royden decomposition in dqe} and Lemma~\ref{lemma:formula for QN  with exhaustion} show
 \begin{align*}
  Q^M(f,g) &= \lim_{n\to \infty}Q_{\varphi_n}(f,g)\\
  &=\lim_{n\to \infty} \left[  Q(\varphi_n f, \varphi_n g) -  Q(\varphi_n fg,\varphi_n) \right]\\
  &= \lim_{n\to \infty} \left[  \QN(\varphi_n f, \varphi_n g) -  \QN(\varphi_n fg,\varphi_n) \right]\\
  &= Q^{(N)}_{(b,0)} (f,g).
 \end{align*}
Note that for the third equality we used that $\varphi_n g$ and $\varphi_n fg$ have finite support and, therefore, belong to $D(\QD)$. This finishes the proof.
\end{proof}

\begin{lemma} \label{lemma:difference of main and killing parts are Markovian}
 Let $(b,c)$ be a graph over the discrete measure space $(X,m)$ and let $Q$ be a Dirichlet form on $\ell^2(X,m)$ with $\QD \geq Q \geq \QN$. The quadratic forms $ Q^M - Q^{(N)}_{(b,0)}$  and $ Q^k -  Q^{(N)}_{(0,c)}$ with domain $D(Q) \cap \ell^\infty(X)$ are Markovian.
\end{lemma}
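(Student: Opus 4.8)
The plan is to prove the statement for the pair $Q^M$ and $Q^{(N)}_{(b,0)}$; the argument for $Q^k - Q^{(N)}_{(0,c)}$ runs along the same lines (and is in fact simpler, since the resulting difference is of killing type rather than jump type), using the second identity in Lemma~\ref{lemma:resolvent approximation qphi} in place of the first. So fix $f \in D(Q)\cap\ell^\infty(X)$ and a normal contraction $C$. Since $D(Q)\cap\ell^\infty(X)$ is an algebra and $Q$ is Markovian (Proposition~\ref{proposition:algebraic and order properties of domains}), we have $C\circ f \in D(Q)\cap\ell^\infty(X)$, and it suffices to verify the contraction inequality $(Q^M - Q^{(N)}_{(b,0)})(C\circ f) \le (Q^M - Q^{(N)}_{(b,0)})(f)$.

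First I would reduce both forms to limits along one common exhaustion. By Lemma~\ref{lemma: choosing increasing seq} applied to the pair $f, C\circ f$, there is an increasing sequence $(\varphi_n)$ in $D(Q)$ with $0\le\varphi_n\le1$ converging pointwise to $1$ such that $Q^M(g) = \lim_n Q_{\varphi_n}(g)$ for $g \in \{f, C\circ f\}$. On the other hand, the computation in Lemma~\ref{lemma:formula for QN with exhaustion} gives $\QN_{\varphi_n}(g) = \tfrac12\sum_{x,y} b(x,y)\varphi_n(x)\varphi_n(y)(g(x)-g(y))^2$, so monotone convergence yields $Q^{(N)}_{(b,0)}(g) = \lim_n \QN_{\varphi_n}(g)$ for the same $g$. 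Subtracting, $(Q^M - Q^{(N)}_{(b,0)})(g) = \lim_n [Q_{\varphi_n}(g) - \QN_{\varphi_n}(g)]$. The crucial algebraic observation is that $Q_{\varphi_n} - \QN_{\varphi_n} = S_{\varphi_n}$ is the truncation construction applied to the nonnegative form $S := Q - \QN$ on $D(Q)$: expanding $Q_{\varphi_n}(g) = Q(\varphi_n g) - Q(\varphi_n g^2,\varphi_n)$ together with the analogous expression for $\QN_{\varphi_n}$ and collecting terms gives $S(\varphi_n g) - S(\varphi_n g^2,\varphi_n)$. Thus the task reduces to proving $S_{\varphi_n}(C\circ f) \le S_{\varphi_n}(f)$ for each $n$, after which the inequality passes to the limit.

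To establish this I would return to the approximating graphs $(b_\alpha, c_\alpha)$ of Lemma~\ref{lemma:resolvent approximation qphi}. Since $\varphi_n g, \varphi_n g^2 \in D(Q)$, the convergence $Q^{(\alpha)}\to Q$ gives $S_{\varphi_n}(g) = \lim_\alpha \tfrac12\sum_{x,y}(b_\alpha(x,y) - b(x,y))\varphi_n(x)\varphi_n(y)(g(x)-g(y))^2$, whereas $\QN_{\varphi_n}$ is represented by the original $b$ exactly. One would like to read off that $S_{\varphi_n}$ is a jump form with nonnegative weights, which is automatically Markovian, and this is the main obstacle. One does \emph{not} have the pointwise domination $b_\alpha \ge b$: testing $Q^{(\alpha)}$ against indicator functions, which lie in $C_c(X)$ where $Q$ and $\ow{Q}$ agree by Lemma~\ref{lemma:royden decomposition in dqe} and Lemma~\ref{lemma:computing qk and qm for compactly supported functions}, shows $b_\alpha \to b$ pointwise on every edge, so the positive contribution to $S_{\varphi_n}$ comes not from any fixed edge but from jump mass of $b_\alpha$ escaping to the boundary as $\alpha\to\infty$. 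The heart of the proof is to show, following the energy-measure computations of \cite[Chapter~3]{Sch} together with the resolvent monotonicity $G_\alpha^{Q}\le G_\alpha^{\QN}$ coming from $Q\ge\QN$, that this escaping mass is nonnegative and assembles into a genuine boundary jump form. Once the truncated difference is identified as a limit of Markovian jump forms, the contraction inequality holds for each approximant, and the remaining technical point is to control its survival under the interchange of the limits $\alpha\to\infty$ and $n\to\infty$.
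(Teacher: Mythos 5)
Your setup is correct and, up to the point where you reduce matters to the truncated difference, it parallels the paper's argument: the reduction to a contraction inequality, the use of Lemma~\ref{lemma: choosing increasing seq} to get a common exhaustion, and the algebraic identity $Q_{\varphi_n}-\QN_{\varphi_n}=S_{\varphi_n}$ with $S=Q-\QN$ are all fine. But the proof then stops exactly where the lemma's actual content begins. You yourself identify the obstacle — the weights $b_\alpha(x,y)-b(x,y)$ in your representation of $S_{\varphi_n}$ have no sign, only $b_\alpha\to b$ pointwise — and then declare that ``the heart of the proof'' is to show that the escaping jump mass is nonnegative and assembles into a boundary jump form, citing resolvent monotonicity and \cite[Chapter~3]{Sch} without any argument. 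That step is not a technicality: proving that $S_{\varphi_n}$ is Markovian is essentially equivalent to the lemma itself, and nothing in your sketch (in particular, the operator inequality between the resolvents of $Q$ and $\QN$) yields a nonnegative jump representation of the limit. The final interchange of the limits $\alpha\to\infty$ and $n\to\infty$ is likewise flagged but not handled.

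The paper's proof avoids this obstacle entirely by never comparing the approximating weights $b_\alpha$ of $Q$ with the given weights $b$. Instead of subtracting $\QN_{\varphi_n}$ (which is represented by $b$), it takes the sequence $(\psi_n)$ in $D(Q)$ realizing $Q^M$ on $f$ and $C\circ f$, and a \emph{second}, smaller exhaustion $(\varphi_n)$ in $C_c(X)$ with $\varphi_n\leq\psi_n$. Since $Q$ and $\QN$ agree on finitely supported functions (Lemma~\ref{lemma:royden decomposition in dqe}), one has $Q_{\varphi_n}(g)=\QN_{\varphi_n}(g)$, so by Lemma~\ref{lemma:formula for QN  with exhaustion} and monotone convergence
\begin{equation*}
Q^M(g)-Q^{(N)}_{(b,0)}(g)=\lim_{n\to\infty}\bigl[Q_{\psi_n}(g)-Q_{\varphi_n}(g)\bigr],\qquad g\in\{f,\,C\circ f\}.
\end{equation*}
Now \emph{both} truncations are built from the single form $Q$, so Lemma~\ref{lemma:resolvent approximation qphi} represents the difference with the same graphs $(b_\alpha)$ and weights $b_\alpha(x,y)\bigl(\psi_n(x)\psi_n(y)-\varphi_n(x)\varphi_n(y)\bigr)\geq 0$, which are manifestly nonnegative because $\varphi_n\leq\psi_n$. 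Each approximant is then a jump form, hence Markovian, the inequality survives the limit in $\alpha$, and letting $n\to\infty$ finishes the jump part — no interchange of limits and no comparison of $b_\alpha$ with $b$ is needed. (For the killing part the paper also argues differently from your sketch: it exhausts $X$ by finite sets $K_n$, uses Lemma~\ref{lemma:computing qk and qm for compactly supported functions} to see that $Q^k$ and $Q^{(N)}_{(0,c)}$ agree on $C_c(X)$ and that the cross terms vanish, reducing everything to the Markov property of $Q^k$ itself.) To repair your proposal you would need to replace the unproven ``escaping mass'' claim by this two-exhaustion comparison, i.e., dominate $\varphi_n$ by $\psi_n$ and keep both truncations inside $Q$.
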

\begin{proof}   Let $f \in D(Q) \cap \ell^\infty(X)$ and let $C :\IR \to \IR$ a normal contraction. Since $Q$ is a Dirichlet form, we have $C \circ f \in D(Q)\cap \ell^\infty(X)$ and so we only need to establish the inequalities
$$Q^M(C\circ f)  - Q^{(N)}_{(b,0)}(C\circ f) \leq Q^M(f)  - Q^{(N)}_{(b,0)}(f)$$
and
$$Q^k(C\circ f) -  Q^{(N)}_{(0,c)}(C\circ f) \leq Q^k(f) -  Q^{(N)}_{(0,c)}(f).$$

Lemma~\ref{lemma: choosing increasing seq} implies that there exists an increasing sequence $(\psi_n)$ in $D(Q)$ with $0 \leq \psi_n \leq 1$ such that
$$Q^M(f) = \lim_{n\to \infty} Q_{\psi_n}(f) \text{ and } Q^M(C\circ f) = \lim_{n\to \infty} Q_{\psi_n}(C\circ f).$$
Moreover, this sequence can be chosen to converge  pointwise to the constant function $1$. We choose another increasing sequence $(\varphi_n)$ in $C_c(X)$ that converges pointwise  to the constant function $1$ and satisfies $\varphi_n \leq \psi_n$ for all $n$. According to Lemma~\ref{lemma:formula for QN  with exhaustion}, we obtain
\begin{align*}
 Q^M(f) -  Q^{(N)}_{(b,0)}(f) &= \lim_{n\to \infty} \left[Q_{\psi_n}(f) -  Q^{(N)} (\varphi_n f) + \QN(\varphi_n f^2,\varphi_n)  \right]\\
 &=  \lim_{n\to \infty} \left[Q_{\psi_n}(f) - Q_{\varphi_n}(f) \right],
\end{align*}
where we used that $Q$ and $\QN$ agree on $C_c(X)$ for the last equality. By the choice of $(\psi_n)$, the same formula is valid with $f$ being replaced by
$C \circ f$. Lemma~\ref{lemma:resolvent approximation qphi} shows that there exists a family of graphs $(b_\alpha,c_\alpha)$ over $X$ such that
$$Q_{\psi_n}(g) - Q_{\varphi_n}(g) = \lim_{\alpha \to \infty} \sum_{x,y \in X} b_\alpha(x,y) (\psi_n(x) \psi_n(y) - \varphi_n(x) \varphi_n(y))(g(x) - g(y))^2 $$
for every $g\in D(Q)\cap\ell^\infty(X)$.
Since $\psi_n(x) \psi_n(y) \geq  \varphi_n(x) \varphi_n(y)$, this identity and the previous computations imply the Markov property of  $Q^M - Q^{(N)}_{(b,0)}$.

For proving the result on $Q^k -  Q^{(N)}_{(0,c)}$, we let $(K_n)$ be an increasing sequence of finite subsets of $X$ with $\cup K_n = X$ and denote by $1_{K_n}$
the corresponding sequence of indicator functions. The monotone convergence theorem implies
$$Q^k(f) - Q^{(N)}_{(0,c)}(f) = \lim_{n\to \infty} \left[Q^k(f) - Q^{(N)}_{(0,c)}(1_{K_n} f) \right]$$ for $f\in D(Q)\cap\ell^\infty(X)$, since $1_{K_n}f\in C_c(X)$.
From Lemma~\ref{lemma:computing qk and qm for compactly supported functions} we infer
$$Q^k(1_{K_n}f, 1_{X \setminus K_n} f) = Q^{(N)}_{(0,c)}(1_{K_n}f, 1_{X \setminus K_n} f) = 0 \text{ and } Q^k(1_{K_n} f) = Q^{(N)}_{(0,c)}(1_{K_n} f).$$
Therefore, the previous computation simplifies to
$$Q^k(f) - Q^{(N)}_{(0,c)}(f) = \lim_{n\to \infty} Q^k( 1_{X \setminus K_n} f) $$
and the Markov property of $Q^k - Q^{(N)}_{(0,c)}$ follows from the Markov property of $Q^k$.
\end{proof}

\begin{proof}[Proof of Theorem~\ref{theorem:q-qn is markovian}]
  According to Lemma~\ref{lemma:difference of main and killing parts are Markovian}, and by $$Q-Q^{(N)}=(Q^M-Q_{(b,0)}^{(N)})+(Q^k-Q_{(0,c)}^{(N)}),$$
  the quadratic form $Q - \QN$ is Markovian on $D(Q) \cap \ell^\infty(X)$.
  It remains to prove that this property is stable when passing to the extended spaces. To this end, let $C:\IR \to \IR$ a normal contraction,
  let $f \in D(\Qe)$ and let $(f_n)$ be a $Q$-Cauchy sequence that converges pointwise to $f$. Since $D(Q) \cap \ell^\infty(X)$ is dense in $D(Q)$ with respect to the form norm,
  we can assume $f_n \in D(Q) \cap \ell^\infty(X).$ The inequality $\Qe \geq \QNe$ implies that $(f_n)$ converges to $f$ with respect to $\|\cdot\|_o$ and that the sequence
  $(C \circ f_n)$ is bounded in the Hilbert space $(\ow{D},\|\cdot\|_o)$. Hence, by the theorem of Banach-Saks there exists a subsequence $(f_{n_k})$ such that
  $$g_N := \frac{1}{N}\sum_{k = 1}^N ( C \circ f_{n_k})$$
  converges towards some $g \in \ow{D}$ with respect to $\|\cdot\|_o$. Since this convergence implies pointwise convergence and since $C \circ f_n \to C \circ f$ pointwise, we obtain $g_N \to C \circ f$ with respect to $\|\cdot\|_o$. The pointwise and $\|\cdot\|_o$-convergence of $(g_N)$ and the lower semicontinuity of extended forms with respect to pointwise convergence yields
  $$\Qe(C\circ f) - \QNe(C \circ f) \leq \liminf_{N\to \infty} \Qe(g_N) - \lim_{N\to \infty} \QNe(g_N) = \liminf_{N\to \infty} \left(\Qe(g_N) - \QNe(g_N)\right).   $$
  Using that $\Qe - \QNe$ is convex by $\frac{d}{dt}(\Qe-\QNe)(u+tv)=2(\Qe-\QNe)(v)\geq 0$ for every $u,v\in D(Q_e)$, we obtain
  $$\Qe(C\circ f) - \QNe(C \circ f) \leq  \liminf_{N\to \infty} \frac{1}{N} \sum_{k = 1}^N (\Qe(C\circ f_{n_k}) - \QNe(C\circ f_{n_k})).$$
  The Markov property of $\Qe - \QNe$ on $D(Q) \cap \ell^\infty(X)$ yields
  $$\Qe(C\circ f_{n_k}) - \QNe(C\circ f_{n_k}) \leq \Qe(f_{n_k}) - \QNe(f_{n_k}).$$
  Therefore, the inequality
  $$\Qe(C\circ f ) - \QNe(C\circ f) \leq \Qe(f) - \QNe(f),$$
  follows from the previous computations and the choice of the sequence $(f_n)$. This finishes the proof.
\end{proof}

\subsection{Proof of the main result}\label{section: proof main theorem}
\begin{proof}[Proof of Theorem~\ref{theorem:main}]
 (a): Let  $f \in D(Q)$.  Lemma~\ref{lemma:royden decomposition in dqe} implies
 $$Q(f) = \QDe(f_0) + \Qe(f_h) = \ow{Q}(f_0) + \Qe(f_h)$$
 and $$\QN(f) = \QDe(f_0) + \QNe(f_h) = \ow{Q}(f_0) + \QNe(f_h).$$
 Therefore, we infer
 $$Q(f) = \QN(f)  + Q(f) - \QN(f) = \QN(f) + \Qe(f_h) - \QNe(f_h)=\ow{Q}(f) + \Qe(f_h) - \QNe(f_h).$$
 Moreover, Lemma~\ref{proposition:properties of trace} (c) yields  $H_{\Tr f} = f_h$ and we obtain by definition of $\Tr Q$ and $q^{DN}$
 $$\Qe(f_h) = \Qe(H_{\Tr f}) = \Tr Q (\Tr f) \quad\text{and}\quad\QNe(f_h) = \QNe(H_{\Tr f}) = q^{DN}(\Tr f).$$
 These computations show that letting $q = \Tr Q$ and $q' = \Tr Q - q^{DN}$ yields a decomposition as in (a).
 Theorem~\ref{theorem:trace is a dirichlet form} yields that $q$ is a Dirichlet form in the wide sense and Corollary~\ref{corollary: q-qdn is markovian} yields that $q'$ is Markovian.

 (b): Let $q \geq q^{DN}$ be a Dirichlet form on $L^2(\partial_h X,\mu)$ such that $q - q^{DN}$ is Markovian. For $f\in D(\QN)$ we have
 $\ow{Q}(f_h) = \QNe(f_h)=\QNe(H_{\Tr f}) = q^{DN}(\Tr f)$ by Lemma~\ref{proposition:properties of trace} (c).  For $ f \in D(Q_q)$ this implies
 $$Q_q(f) = \ow{Q}(f_0) + \ow{Q}(f_h) + q(\Tr f) - q^{DN}(\Tr f) = \QN(f) + q(\Tr f) - q^{DN}(\Tr f) \geq \QN(f).$$
 This proves $Q_q \geq \QN$. For $g \in D(\QD)$ we have $g = g_0$ and $\Tr g = 0$ by Lemma~\ref{proposition:properties of trace}. We obtain
 $$Q_q(g) = \ow{Q}(g_0) = \ow{Q}(g) = \QD(g).$$
 Therefore, $\QD \geq Q_q$ is also satisfied.

 Next, we prove the Markov property of $Q_q$. Let $C :\IR \to \IR$ be a normal contraction and let $f \in D(Q_q)$.
 The Markov property of $q - q^{DN}$ and $\QN$ together with the identity $Q_q(g)=\QN(g) + q(\Tr g) - q^{DN}(\Tr g)$, $g\in D(Q_q)$, seen above show
 \begin{align*}
  Q_q(f) &= \QN(f) + q (\Tr f) - q^{DN}(\Tr f) \\
  &\geq \QN(C \circ f) + q(C \circ (\Tr f)) - q^{DN}(C \circ (\Tr f))\\
  &=  \QN(C \circ f) + q( \Tr (C\circ  f)) - q^{DN}(\Tr ( C\circ f))\\
  &= Q_q (C \circ f),
 \end{align*}
 where we used that $C$ and $\Tr$ commute on $\Tr D(Q_q) \subseteq \Tr \ow{D}$ by Lemma~\ref{proposition:properties of trace} (a).

 Next we show the closedness of $Q_q$. Let $(f_n)$ be a Cauchy sequence in $(D(Q_q),\|\cdot\|_{Q_q})$. We show that $(f_n)$ has a $\|\cdot\|_{Q_q}$-limit.
 The inequality $Q_q \geq \QN$ implies that the space $(D(Q_q),\|\cdot\|_{Q_q})$ continuously embeds in the Hilbert space $(\ow{D},\|\cdot\|_o)$.
 Let $f$ be the limit  of $(f_n)$ in $(\ow{D},\|\cdot\|_o)$. We have to show $\ow{Q}(f_0 - (f_n)_0)\to 0$, and $f\in D(\QN)$, and $q(\Tr f_n-\Tr f)\to 0$.

 The Royden decomposition, Theorem~\ref{theorem:royden decomposition}, shows
 $$\ow{Q}(f_0 - (f_n)_0) = \ow{Q}((f  - f_n)_0) \leq \ow{Q}(f-f_n) \leq \|f - f_n\|_o \to 0, \text{ as }n \to \infty. $$
 Since $(f_n)$ converges in $\ell^2(X,m)$ (as $(f_n)$ is a $\|\cdot\|_{Q_q}$ Cauchy sequence) and since convergence in $(\ow{D},\|\cdot\|_o)$ implies pointwise convergence,
 we obtain $\|f-f_n\|_2 \to 0$, as $n \to \infty$. Thus, $f\in \ow{D}\cap\ell^2(X,m)=D(\QN)$ by definition of $\QN$.
 Moreover, the continuity of the trace map $\Tr:(\ow{D},\|\cdot\|_o)\to L^2(\partial_h X,\mu)$, c.f. Corollary~\ref{corollary: properties of trace map}, yields $\Tr f_n \to \Tr f$ in $L^2(\partial_h X, \mu)$. From the lower semicontinuity of $q$ on $L^2(\partial_h X,\mu)$, we infer
 $$q(\Tr f - \Tr f_n) \leq \liminf_{k \to \infty} q(\Tr f_k - \Tr f_n) \leq \liminf_{k \to \infty} \|f_k - f_n\|_{Q_q}^2.$$
  Altogether, this implies $\|f - f_n\|_{Q_q} \to 0$, as $n\to \infty$, and so the closedness of $Q_q$ is proven.

  Now assume that $m$ is finite. We first show $q = \Tr Q_q$. Since bounded functions are dense in the domains of Dirichlet forms by Proposition~\ref{proposition:algebraic and order properties of domains}, it suffices to prove
  $$D(q) \cap L^\infty(\partial_h X,\mu) = D(\Tr Q_q) \cap L^\infty(\partial_h X,\mu)$$
  and that $q$ and $\Tr Q_q$ agree on these spaces.

  Let $\varphi \in D(\Tr Q_q) \cap L^\infty(\partial_h X,\mu)$.
  By definition of the trace form we have  $H_\varphi \in D(Q_{q,{\rm e}})$. Since $\varphi$ is bounded, we infer
  $$|H_\varphi(y)|=\left|\int_{\partial_h X}\varphi~d\mu_y\right|\leq\|\varphi\|_\infty\mu_y(\partial_h X)\leq\|\varphi\|_\infty$$ for every $y\in X$.
  Hence,
  $H_\varphi \in \ell^\infty(X)$ and so $H_\varphi \in \ell^2(X,m)$ by the finiteness of $m$. The characterization of form domains in terms of extended form domains, see \cite[Theorem~1.1.5]{CF}, implies
  $$D(Q_{q}) = D(Q_{q,{\rm e}}) \cap \ell^2(X,m)$$
  and we obtain $H_\varphi \in D(Q_q)$. Since $\varphi\in D(\Tr Q_q)\subseteq \Tr\ow{D}$  an application of Lemma~\ref{proposition:properties of trace} (d) yields
  $\varphi = \Tr H_\varphi$. By definition of $Q_q$ we have $\Tr H_\varphi\in D(q)$ and, therefore, $\varphi\in D(q)$. Hence, we deduce
  $ D(\Tr Q_q) \cap L^\infty(\partial_h X,\mu)\subseteq D(q) \cap L^\infty(\partial_h X,\mu)$. Moreover,
  $$q(\varphi) = q(\Tr H_\varphi) = Q_q(H_\varphi) = \Tr Q_q (\varphi).$$
   This proves that for bounded functions $q$ is an extension of $\Tr Q_q$.

   Let $\varphi \in D(q) \cap L^\infty(\partial_h X,\mu)$.  The inequality $q \geq q^{DN}$ yields $\varphi \in D(q^{DN})$,
   which implies $H_{\varphi} \in D(\QNe) \subseteq \ow{D}$   by definition of $q^{DN}$.  As seen above, the boundedness of $\varphi$ implies $H_\varphi \in \ow{D} \cap \ell^\infty(X)$
   and so the finiteness of $m$ implies $H_\varphi \in   \ow{D} \cap \ell^2(X,m) = D(\QN)$ by definition of $\QN$. Since we have $\Tr H_\varphi = \varphi \in D(q)$
   (by Lemma~\ref{proposition:properties of trace} (d) and $D(q) \subseteq D(q^{DN}) \subseteq \Tr \ow{D}$), we obtain $H_\varphi \in D(Q_q)$.
   Hence, $\varphi = \Tr H_\varphi \in \Tr D(Q_q)\subseteq \Tr D(Q_{q,e})=D(\Tr Q_q)$ by definition of $\Tr Q_q$.  This concludes the proof of (b).

  \end{proof}

\begin{proof}[Proof of Corollary~\ref{coro-parametrization}]
The injectivity of the map $\Tr$ follows from   (a) of the theorem
and its surjectivity (for finite $m$) follows from  (b) of the
theorem.
\end{proof}

 \section{A toy example}\label{toy}
 In this section we discuss the example of a star graph consisting of copies of $\IN$ attached to a center. In order to get a finite Royden boundary we assume that the inverses of the
 weights along the rays are summable. We give an explicit formula for $q^{DN}$ in the case that all rays look the same. However, we explain how the case of different rays can be treated at the end of
 this section.

Let $N>1$ be a natural number. We consider a set $X$ given by $N$ copies of $\IN$ denoted by $\IN_j$, $j\in\{1,\ldots,N\}$, and a point $0$. Denote the $k$-th element of the $j$-th copy of $\IN$ by $k_j$, $k\in\IN, j\in\{1,\ldots,N\}$.
We set $0_j:=0$ for every $j$. Let $(b_i)_{i\in\IN}$ be a sequence of positive numbers such that $$B:=b_1\sum_{i=1}^\infty \frac{1}{b_i}<\infty.$$
We induce a graph structure $(b,0)$ on $X$ via $$b((k-1)_j,k_j)=b(k_j,(k-1)_j):=b_k,$$ $k\in\IN,j\in\{1,\ldots,N\}$, and $b(x,y)=0$ otherwise.
Thus, we have a star graph with $N$ rays and symmetric weights on the edges.
\cite[Theorem 6.18 and Theorem 6.34]{soardi} yields that the harmonic boundary $\partial_h X$ consists of exactly $N$ points, denoted by $\infty_j$, $j\in\{1,\ldots,N\}$,
where each boundary point $\infty_j$  belongs to one ray $\IN_j$, i.e., every sequence of vertices that approximates the point $\infty_j$ is eventually contained in the ray $\IN_j$.
Moreover, we infer that  $\partial X=\partial_h X$ holds by \cite[Example 4.6]{canon},  \cite[Corollary 2.3 and Theorem 4.2]{uniform}.

Our goal in this section is to show how one can explicitly compute  the Dirichlet to Neumann form $q^{DN}$.
At first, we construct a basis of the space of harmonic functions in $\ow{D}$. Using \cite[Lemma 6.4]{soardi} we obtain that the space of harmonic functions is $N$-dimensional.
Since $c\equiv 0$, the constant functions are harmonic. Let $h_1\equiv 1\in\ow{D}$.
For the other $N-1$ base elements we define $h_j(0)=0$, $h_j(k_1)=-\sum_{l=1}^k\frac{b_1}{b_l}$, $h_j(k_j)=\sum_{l=1}^k\frac{b_1}{b_l}$, $k\in\IN$ and $h_j(k_i)=0$ for $i\not=1,i\not=j$ and $k\in\IN$. Hence, the function $h_j$ is supported on $\IN_1\cup\IN_j$.
The functions $h_1,\ldots,h_n$ are obviously linearly independent. One can easily check that they are harmonic and in $\ow{D}$.
Moreover, we infer $h_j(\infty_j)=-h_1(\infty_j)=B$ and
$h_j(\infty_k)=0$  otherwise, $j\in\{2,\ldots,N\}$.

Let $\varphi\in C(\partial_h X)$ be arbitrary and let $H_\varphi$ be the harmonic extension. Then, there are $\lambda_1,\ldots,\lambda_N$ such that $\lambda_1h_1+\ldots+\lambda_N h_N=H_\varphi$.
Applying the preliminary trace $\gamma_0$ yields the equations
\begin{align*}
\varphi(\infty_j)=\lambda_j \sum_{l=1}^\infty \frac{b_1}{b_l}+\lambda_1=\lambda_j B+\lambda_1,\quad j\in\{2,\ldots, N\}
\end{align*}
and
\[\varphi(\infty_1)=-(\lambda_2+\ldots+\lambda_N) \sum_{l=1}^\infty \frac{b_1}{b_l}+\lambda_1=(\lambda_2+\ldots+\lambda_N)B+\lambda_1.\]
This system has the solution \[\lambda_1=\frac{\varphi(\infty_1)+\ldots+\varphi(\infty_N)}{N}, \lambda_j=\frac{\varphi(\infty_j)-\lambda_1}{B}.\]
Therefore, we can construct the harmonic extension $H_\varphi$ and, hence, the harmonic measures $\mu_x$. The harmonic measure at $0$ has a particularly simple structure, it is given by
$\mu_0(\{\infty_j\})=\frac{1}{N}$.

Now let $m$ be a finite measure given by a function $m:X\to (0,\infty)$. Then, every function in $\ow{D}$ is in $\ell^2(X,m)$ as well, since every function in $\ow{D}$ is bounded,
c.f. \cite[Example~4.6]{canon}. Hence, we infer $D(\NeuDir)=D(Q_e^{(N)})=\ow{D}$. Then, the Dirichlet to Neumann form $q^{DN}$ is defined on
$D(q^{DN})=\Tr D(Q_e^{(N)})=\Tr \ow{D}\subseteq L^2(\partial_h X,\mu_0)=C(\partial_h X)$ and given by \[q^{DN}(\varphi)=\sum_{i=2}^N\sum_{j=2}^N \lambda_i\lambda_j \ow{Q}(h_i,h_j),\] where the $\lambda_i$ are defined as above. Furthermore, we have
\[\ow{Q}(h_i,h_j)=\frac122\sum_{i=1}^\infty b_i \frac{b_1^2}{b_i^2}=b_1 B,\quad i\not=j,\]  and
\[\ow{Q}(h_i)=\frac{1}{2}\left(2\sum_{i=1}^\infty b_i \frac{b_1^2}{b_i^2}+2\sum_{i=1}^\infty b_i \frac{(-b_1)^2}{b_i^2}\right)=2b_1B.\]
Hence, we infer \begin{align*}
q^{DN}(\varphi)=\frac{b_1}{B}\sum_{i=2}^N\sum_{j=2,j\not=i}^N (\varphi(\infty_i)-\lambda_1)(\varphi(\infty_j)-\lambda_1)+\frac{2b_1}{B}\sum_{i=2}^N (\varphi(\infty_i)-\lambda_1)^2
\end{align*}
with $\lambda_1=\frac{\varphi(\infty_1)+\ldots+\varphi(\infty_N)}{N}$ as above.
Further simplifying the right hand side yields
$$q^{DN}(\varphi)=\frac{b_1}{2BN}\sum_{i,j=1}^N (\varphi(\infty_i)-\varphi(\infty_j))^2.$$

If we are given a function $u$ in $\ow{D}$, then we can calculate $\Tr u$ by taking the limit along the rays. Then we can compute $u_h=H_{\Tr u}$ as above and, hence,
we can compute $u_0$ as well.  Therefore, in this easy example we can construct all parts of the decomposition  $\NeuDir(u)=\DirDir(u_0)+q^{DN}(\Tr u)$.
\begin{remarks}
 This calculation can easily be extended to the case where each ray has a separate system $(b_k^{(j)})_k,$ $j\in\{1,\ldots, N\}$, such that $\sum_{k=1}^\infty\frac{1}{b_k^{(j)}}<\infty$.
 In this case a basis for the harmonic functions in $\ow{D}$ is given by $h_1\equiv 1$ and  $h_j(k_1)=-\sum_{l=1}^k\frac{b_1^{(1)}}{b_l^{(1)}}$,
 $h_j(k_j)=\sum_{l=1}^k\frac{b_1^{(1)}}{b_l^{(j)}}$, $k\in\IN$, and $h_j(k_i)=0$ for $i\not=1,i\not=j$ and $k\in\IN$. However, in this situation the
 $(\lambda_k)_{k=1,\ldots, N}$ have a more difficult structure.
\end{remarks}

\section{A example of a form  extending $Q^{(D)}$ but  not satisfying $Q \geq Q^{(N)}$}\label{appendix:beispiel}

In this appendix we give an explicit example for a weighted graph $(b,c)$ and a form $Q$ that extends $Q^{(D)}$ but does not satisfy $Q \geq \QN$. In principle, we use the same arguments as in \cite[Example~3.32]{Sch}. We let $X = \IZ^d$ the integer lattice of dimension $d$ and choose standard weights
$$b:\IZ^d \times \IZ^d \to \{0,1\}, \, b(x,y) = \begin{cases}
1 &\text{if } |x - y| = 1,\\
0 &\text{else}.
\end{cases}
$$
For a detailed proof of the following claims we refer to the discussion in \cite[Section~6]{uniform}.  It follows from the  Liouville property of $\IZ^d$ that any harmonic function of finite energy with respect to $(b,0)$ is constant. If $d \geq 3$, the graph $(b,0)$ is transient and  the Royden decomposition theorem implies $\ow{D} = D_0 + \IR \cdot 1$, where the sum is a direct sum of vector spaces. As a Cayley graph of a group, $(b,0)$ is even uniformly transient when $d \geq 3$, i.e., $D_0 \subseteq C_0(\IZ^d)$ (here $C_0(\IZ^d)$ denotes the functions $f:\IZ^d \to \IR$ that satisfy $\lim_{|x| \to \infty} f(x) = 0)$. In particular, the Royden boundary of $(b,0)$ consists of one point $\infty$, and the limit $\lim_{|x| \to \infty} f(x)$ exists for any $f \in \ow{D}$ and equals $f(\infty)$.

Let $c = \delta_0$, i.e., $c(0) = 1$ and $c(x) = 0$, otherwise. The space $\ow{D}$ is the same for the graphs $(b,0)$ and $(b,c)$. Likewise, $D_0$ does not see $c$, and the Royden compactifications of $(b,c)$ and $(b,0)$ agree and are given by the one-point compactification $\IZ^d \cup \{\infty\}$. We choose a finite measure $m$ on $\IZ^d$.  It follows from \cite[Theorem~1.1.5]{CF} that $D(Q_{(b,c)}^{(D)}) = (Q_{(b,c),\, {\rm e}}^{(D)}) \cap \ell^2(\IZ^d,m)$. Therefore, Lemma~\ref{lemma:extended dirichlet spaces of qd and qn} and the boundedness of functions in $D_0$ implies $D(Q_{(b,c)}^{(D)}) = D_0$. Moreover, the boundedness of functions in $\ow{D}$ yields $D(Q_{(b,c)}^{(N)}) = \ow{D}\cap \ell^2(\IZ^d,m) =\ow{D}$.

We define the quadratic form $Q$ on $\ell^2(\IZ^d,m)$ by letting $D(Q) = \ow{D}$ and
$$Q(f) := Q_{(b,0)}^{(N)}(f) + (f(\infty) - f(0))^2.$$
It is easily verified that $Q$ is a Dirichlet form.  For any $f \in D_0 = D(Q_{(b,c)}^{(D)})$ we have   $f(\infty) = 0$ and so
$$Q(f) =Q_{(b,0)}^{(N)}(f) + f(0)^2 = Q_{(b,c)}^{(D)}(f).$$
Moreover, $1 \in D(Q)$ and $Q(1) = 0 < 1 = Q^{(N)}_{(b,c)}(1)$. Therefore, $Q$ is a form that extends $Q_{(b,c)}^{(D)}$ but does not satisfy $Q \geq Q_{(b,c)}^{(N)}$.

The abstract reason for the existence of a form $Q$ that extends $\QD$ but does not satisfy $Q \geq \QN$ if $c \neq 0$ is the following. The killing $c$ on functions that vanish on the Royden boundary can  either be induced by the killing itself or by jumps from the inside to the Royden boundary. In the previous example this manifests in the fact that the terms
$$(f(0) - f(\infty))^2 \text{ and } f(0)^2$$
agree for functions that vanish at the point $\infty$.

\phantomsection

 \appendix

 \section{A lemma on lower semicontinuous forms}

 The following lemma is a consequence of \cite[Lemma~1.45]{Sch}. We include the proof for the convenience of the reader.

   \begin{lemma} \label{lemma:q-wekly convergent sequences}
    Let $X$ be a topological space and let $Q$ be a quadratic form on $D(Q)\subseteq C(X)$ which is lower semicontinuous with respect to pointwise convergence. Then every $Q$-bounded sequence in $D(Q)$ that converges pointwise to $u$ converges $Q$-weakly  to $u$.
   \end{lemma}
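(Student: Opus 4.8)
The plan is to transfer everything into the Hilbert space attached to $Q$ and to use lower semicontinuity only to identify limits. Since $Q \geq 0$, the vector space $D(Q)$ carries the semi-inner product $Q(\cdot,\cdot)$ obtained by polarization; let $(H,\langle\cdot,\cdot\rangle)$ be the Hilbert space obtained from $D(Q)$ by dividing out $\ker Q$ and completing, and let $\iota\colon D(Q)\to H$ be the canonical map, so that $\langle \iota f,\iota g\rangle = Q(f,g)$. Recall that $f_n\to u$ $Q$-weakly means $u\in D(Q)$ and $Q(f_n,g)\to Q(u,g)$ for every $g\in D(Q)$. First I would note $u\in D(Q)$: as $f_n\to u$ pointwise and $\sup_n Q(f_n)<\infty$, lower semicontinuity gives $Q(u)\leq \liminf_n Q(f_n)<\infty$. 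Because $\iota(D(Q))$ is dense in $H$ and the sequence $(\iota f_n)$ is bounded (the sequence is $Q$-bounded), $Q$-weak convergence of $(f_n)$ to $u$ is equivalent to weak convergence $\iota f_n\rightharpoonup \iota u$ in $H$.

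Thus it suffices to show $\iota f_n\rightharpoonup \iota u$, and for a bounded sequence in a Hilbert space this follows once every weak cluster point is shown to equal $\iota u$. So let $(\iota f_{n_k})$ be any weakly convergent subsequence, with weak limit $v\in H$; such subsequences exist by the weak sequential compactness of bounded sets in $H$. By the Banach--Saks theorem there is a further subsequence whose Cesàro means converge strongly to $v$. Writing $g_N:=\frac1N\sum_{l=1}^N f_{n_{k_l}}$ for the corresponding functions, this says that $(g_N)$ is $Q$-Cauchy with $\iota g_N\to v$ in $H$, while at the same time $g_N\to u$ pointwise, since at each $x$ the Cesàro mean of the convergent real sequence $\bigl(f_{n_{k_l}}(x)\bigr)_l$ converges to $u(x)$.

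It remains to identify $v=\iota u$, and this is the one place where lower semicontinuity is needed. Fix $M$ and let $N\to\infty$; then $g_N-g_M\to u-g_M$ pointwise, with $u-g_M\in D(Q)$, so lower semicontinuity yields $Q(u-g_M)\leq \liminf_{N\to\infty} Q(g_N-g_M)$. Since $(g_N)$ is $Q$-Cauchy, the right-hand side tends to $0$ as $M\to\infty$, hence $Q(g_M-u)\to 0$, i.e.\ $\iota g_M\to \iota u$ strongly. Comparing with $\iota g_M\to v$ gives $v=\iota u$. As this holds for every weak cluster point, we conclude $\iota f_n\rightharpoonup \iota u$, which is exactly $f_n\to u$ $Q$-weakly.

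The main obstacle is that pointwise convergence and $Q$-weak convergence live in a priori unrelated topologies: point evaluations need not be continuous on $H$, so the weak limit cannot be read off directly from the pointwise limit. The device that bridges the two is the passage from weak to strong convergence via Banach--Saks (Mazur's lemma would serve equally well), after which the $Q$-Cauchy property together with pointwise lower semicontinuity pins down the limit.
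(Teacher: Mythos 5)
Your proof is correct, but it takes a genuinely different route from the paper. The paper's argument is a short, elementary perturbation computation carried out directly at the level of the form: after reducing to $u=0$, it expands $Q(v-\alpha u_i)$ for a test function $v\in D(Q)$ and $\alpha>0$, applies lower semicontinuity to get $Q(v)\leq Q(v)-2\alpha\limsup_i Q(u_i,v)+\alpha^2 M$ (with $M$ the $Q$-bound), divides by $\alpha$ and lets $\alpha\to 0$ to conclude $\limsup_i Q(u_i,v)\leq 0$, and then replaces $v$ by $-v$. In contrast, you pass to the Hilbert space completion of $(D(Q)/\ker Q,\,Q)$ and assemble the conclusion from standard functional analysis: weak sequential compactness of bounded sets, the Banach--Saks theorem, and a uniqueness-of-cluster-points argument, with lower semicontinuity entering only once, to identify the strong limit of the Ces\`aro means as $\iota u$ via the $Q$-Cauchy property. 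Each approach has its merits: the paper's is shorter, needs no completeness, compactness, or Banach--Saks, and uses lower semicontinuity as the main engine of the estimate; yours is more modular and conceptual, reduces the lemma to off-the-shelf Hilbert space facts (all of which the paper uses elsewhere anyway), and as a by-product shows that a subsequence of Ces\`aro means converges to $u$ in the $Q$-seminorm, which is slightly more than the claimed weak convergence. One point to flag: your construction of the completion requires $Q\geq 0$, which you assume explicitly; this is implicit in the paper's notion of a $Q$-bounded sequence (boundedness of the seminorm $Q(\cdot)^{1/2}$) and is satisfied in all applications, whereas the paper's computation never needs to form a quotient or completion at all.
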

   \begin{proof}
    The lower semicontinuity of $Q$ and the $Q$-boundedness of $(u_i)$ imply $u \in D(Q)$. Hence, we can assume $u = 0$.
    The $Q$-boundedness of $(u_i)$ implies that for each $v \in D(Q)$ we have
    $$- \infty < \liminf_{i\to\infty} Q(u_i,v) \leq \limsup_{i\to\infty} Q(u_i,v) < \infty. $$
    Let $M \geq 0$ such that $Q(u_i) \leq M$ for each $i\in\IN$.  For  $\alpha > 0$ and $v \in D(Q)$ we obtain $v-\alpha u_i \to v$ pointwise.  The lower semicontinuity of $Q$   yields
    \begin{align*}
    Q(v) &\leq \liminf_{i\to\infty} Q(v - \alpha u_i)\\
    &= \liminf_{i\to\infty} \left(Q(v) - 2\alpha Q(u_i,v) + \alpha^2 Q(u_i) \right) \\
    &\leq \liminf_{i\to\infty} \left(Q(v) - 2\alpha Q(u_i,v) + \alpha^2 M \right) \\
    &=Q(v) -2\alpha \limsup_{i\to\infty} Q(u_i,v) + \alpha^2 M.
    \end{align*}
    Hence, for all $\alpha > 0$ we obtain $ 2\limsup_i Q(u_i,v) \leq \alpha M$, which implies $\limsup_i Q(u_i,v) \leq 0$.  Since $v$ was arbitrary, we also have $\limsup_i Q(u_i,-v) \leq 0$ and conclude
    $$0 \leq  \liminf_{i\to\infty} Q(u_i,v) \leq \limsup_{i\to\infty} Q(u_i,v) \leq 0.$$
    This finishes the proof. \end{proof}
%

 \section{A characterization of transience}\label{section:Char_Trans}
 The following theorem is a well-known characterization of transience in terms of associated function spaces and the existence of Green's function. Recall that a graph is called transient
 if for all $f\in D_0$ the equality  $\ow{Q}(f)=0$  implies $f=0$.
\begin{theorem}[Characterization of transience]\label{CharTrans}
 Let $(b,c)$ be a graph over $X$. The following assertions are equivalent.
 \begin{itemize}
  \item[(i)] For all  $o\in X$ the functionals $\|\cdot\|_o$ and $\widetilde{Q}^{\frac{1}{2}}$ are equivalent norms on $C_c(X)$.
  \item[(ii)] For all $o\in X$ there exists $C_o> 0$ such that $|\varphi(o)|^2\leq C_o\widetilde{Q}(\varphi)$ for all $\varphi\in C_c(X)$.
  \item[(iii)] The graph $(b,c)$ is transient.
  \item[(iv)] For all $x \in X$ there exists $g_x \in \Dzero \cap \ell^\infty(X)$ such that for all $f \in \Dzero$
  $$f(x) = \ow{Q}(g_x,f).$$

 \end{itemize}
 Moreover, if one (and, thus, all) of the conditions above is satisfied, then for every $x\in X$ the function $g_x$ is bounded with $\|g_x\|_\infty=g_x(x)$.
\end{theorem}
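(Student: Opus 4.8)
The plan is to run the cycle of implications (i)~$\Leftrightarrow$~(ii), (ii)~$\Leftrightarrow$~(iii), and (ii)~$\Leftrightarrow$~(iv), and to read off the ``Moreover'' part from the construction in (iv). The equivalence (i)~$\Leftrightarrow$~(ii) I would dispatch immediately from the definition $\|\varphi\|_o^2 = \widetilde{Q}(\varphi) + \varphi(o)^2$: the estimate $\widetilde{Q}(\varphi) \leq \|\varphi\|_o^2$ always holds, so $\|\cdot\|_o$ and $\widetilde{Q}^{1/2}$ are equivalent on $C_c(X)$ exactly when some constant $C$ satisfies $\|\varphi\|_o^2 \leq C\,\widetilde{Q}(\varphi)$, and this rearranges precisely into the inequality $\varphi(o)^2 \leq (C-1)\widetilde{Q}(\varphi)$ of (ii). For (ii)~$\Rightarrow$~(iii) I would take $f \in \Dzero$ with $\widetilde{Q}(f) = 0$, approximate it by $\varphi_n \in C_c(X)$ in $\|\cdot\|_o$ (hence also pointwise, and with $\widetilde{Q}(\varphi_n) \to \widetilde{Q}(f) = 0$), and pass the bound $\varphi_n(x)^2 \leq C_x\,\widetilde{Q}(\varphi_n)$ to the limit to conclude $f(x) = 0$ for every $x$, i.e.\ $f = 0$.

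The hard part will be (iii)~$\Rightarrow$~(ii), deriving the Poincar\'e-type inequality from transience; I would argue by contraposition. If (ii) fails at some $o$, then for each $n$ there is $\psi \in C_c(X)$ with $\psi(o)^2 > n\,\widetilde{Q}(\psi)$, forcing $\psi(o) \neq 0$, and after normalizing I obtain $\varphi_n \in C_c(X)$ with $\varphi_n(o) = 1$ and $\widetilde{Q}(\varphi_n) \to 0$. Truncating by the normal contraction onto $[0,1]$ preserves $\varphi_n(o) = 1$ and does not increase energy, so $\|\varphi_n\|_o^2 = \widetilde{Q}(\varphi_n) + 1$ stays bounded. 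Thus $(\varphi_n)$ is a bounded sequence in the Hilbert space $(\ow{D},\|\cdot\|_o)$ of Proposition~\ref{lemma:properties of dtilde}, and a subsequence converges weakly to some $f$ lying in the closed (hence weakly closed) subspace $\Dzero$. Since each point evaluation is a bounded functional on $(\ow{D},\|\cdot\|_o)$ (because the norms $\|\cdot\|_x$ and $\|\cdot\|_o$ are equivalent), weak convergence implies pointwise convergence, so $f(o) = 1$ and $f \neq 0$; meanwhile the lower semicontinuity of $\widetilde{Q}$ under pointwise convergence yields $\widetilde{Q}(f) \leq \liminf \widetilde{Q}(\varphi_n) = 0$. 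This produces a nonzero $f \in \Dzero$ of vanishing energy, contradicting transience.

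It then remains to close the loop through (iv). For (ii)~$\Rightarrow$~(iv) I would note that (ii) extends by the same limiting procedure to all of $\Dzero$, making $\widetilde{Q}^{1/2}$ a norm equivalent to $\|\cdot\|_o$ there; hence $(\Dzero,\widetilde{Q})$ is a Hilbert space and each $\delta_x$ is a bounded functional on it, so the Riesz representation theorem furnishes $g_x \in \Dzero$ with $f(x) = \widetilde{Q}(g_x,f)$ for all $f \in \Dzero$. For the boundedness asserted in (iv) and the ``Moreover'' statement, I would exploit that $g_x$ is the $\widetilde{Q}$-minimizer over the affine set $\{f \in \Dzero : f(x) = g_x(x)\}$ (which equals $g_x + g_x^\perp$): applying the normal contraction that truncates onto $[0,g_x(x)]$, where $g_x(x) = \widetilde{Q}(g_x) \geq 0$, gives a competitor with the same value at $x$ and no larger energy, so uniqueness of the minimizer forces it to equal $g_x$, whence $0 \leq g_x \leq g_x(x)$ and therefore $\|g_x\|_\infty = g_x(x)$. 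Finally, (iv)~$\Rightarrow$~(ii) is a one-line Cauchy--Schwarz estimate: for $\varphi \in C_c(X)$ we have $\varphi(o)^2 = \widetilde{Q}(g_o,\varphi)^2 \leq \widetilde{Q}(g_o)\,\widetilde{Q}(\varphi) = g_o(o)\,\widetilde{Q}(\varphi)$.
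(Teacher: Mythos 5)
Your proposal is correct, and it is more self-contained than the paper's own proof: for the equivalence of (i), (ii) and (iii) the paper simply cites \cite[Theorem~B.2]{uniform}, whereas you prove these implications from scratch. Your (i)~$\Leftrightarrow$~(ii) rearrangement and your (ii)~$\Rightarrow$~(iii) limiting argument are exactly right, and your contraposition proof of (iii)~$\Rightarrow$~(ii) -- normalizing to $\varphi_n(o)=1$ with $\ow{Q}(\varphi_n)\to 0$, extracting a weak limit $f\in\Dzero$ in the Hilbert space $(\ow{D},\|\cdot\|_o)$, and using that bounded point evaluations turn weak convergence into pointwise convergence so that lower semicontinuity gives $\ow{Q}(f)=0$ while $f(o)=1$ -- is a clean argument that the paper does not spell out; note only that your truncation onto $[0,1]$ is superfluous, since $\|\varphi_n\|_o^2=\ow{Q}(\varphi_n)+1$ is bounded without it. On the remaining pieces you coincide with the paper: (ii)~$\Rightarrow$~(iv) is the same Riesz-representation argument on the Hilbert space $(\Dzero,\ow{Q})$, and (iv)~$\Rightarrow$~(ii) is the same one-line Cauchy--Schwarz estimate. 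For the ``Moreover'' part the paper computes directly that $\ow{Q}\bigl(|g_x|\wedge g_x(x)-g_x\bigr)=0$ (expanding the square, using Markovianity for $\ow{Q}(|g_x|\wedge g_x(x))\leq\ow{Q}(g_x)$ and the reproducing property for the cross term) and then invokes (ii) to conclude $g_x=|g_x|\wedge g_x(x)$; your variant packages the same two ingredients -- normal contraction plus the reproducing property -- as uniqueness of the $\ow{Q}$-minimizer over $\{f\in\Dzero: f(x)=g_x(x)\}$, which is arguably more conceptual but mathematically equivalent. In short: same skeleton for (iv) and the boundedness claim, but your treatment of (i)--(iii) replaces an external citation by a complete weak-compactness argument, which is a genuine gain in self-containedness.
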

\begin{proof}
 For the equivalence of (i), (ii) and (iii) see e.g.  \cite[Theorem~B.2]{uniform}. The implication (iv) $\Rightarrow$ (ii) is a consequence of the Cauchy-Schwarz inequality.

 (i) \& (ii) $\Rightarrow$ (iv): Assertion (i) implies that $\ow{Q}^{1/2}$ and $\|\cdot\|_o$ are equivalent norms on $\Dzero$, the closure of $C_c(X)$ in $(\ow{D},\|\cdot\|_o)$. Since the latter space is complete, $(\Dzero,\ow{Q}^{1/2})$ is a Hilbert space. Assertion (ii) states that for each $x \in X$ the functional
 $$\Dzero \to \IR,\, f \mapsto f(x)$$
 is continuous with respect to $\ow{Q}^{1/2}$. According to the Riesz representation theorem, for each $x \in X$ there exists a function $g_x \in \Dzero$ such that $f(x) = \ow{Q}(g_x,f)$ for all $f \in \Dzero$. For proving the boundedness of $g_x$  we note that $g_x(x) = \ow{Q}(g_x) \geq 0 $ and compute
 \begin{align*}
  \ow{Q}(|g_x| \wedge g_x(x) - g_x) &= \ow{Q} ( |g_x| \wedge g_x(x) ) + \ow{Q}(g_x) - 2\ow{Q}(g_x, |g_x| \wedge g_x(x)) \\
  &\leq \ow{Q} (g_x) + \ow{Q}(g_x) - 2\ow{Q}(g_x, |g_x| \wedge g_x(x))\\
  &= 2g_x(x) - 2|g_x(x)| \wedge g_x(x)\\
  &= 0.
 \end{align*}
 Here, we used the compatibility of $\ow{Q}$ with normal contractions. According to (ii), this implies $g_x = |g_x| \wedge g_x(x) $ and so we obtain $\|g_x\|_\infty = g_x(x).$ This finishes the proof.
\end{proof}

\begin{remarks}
  Obviously, $C_c(X)$ can be replaced by $\Dzero$ in the assertions (i) and (ii).
 \end{remarks}
 \begin{remarks}
  Assertion (ii)  shows that for a transient graph the $\Qgen$-convergence of a sequence in $\Dzero$ implies its pointwise convergence.
 \end{remarks}
 \begin{remarks}
 The functions $g_x, x \in X$, in assertion (iv)  are unique and the function $G:X \times X \to \IR, \, G(x,y):= g_x(y)$ is sometimes called the Green's function of $(b,c)$.
\end{remarks}

\section{The proof of Kasue's theorem for the Royden boundary}\label{appendix: proof kasue}

Here we prove Theorem~\ref{gammaLtwo}, which states that on
transient graphs with $\partial_h X\not=\emptyset$ the preliminary
trace map $\gamma_0$ is a continuous operator from
$\ow{D}\cap\ell^\infty(X)$ to $L^2(\partial_h X,\mu)$. This was
proven by Kasue for the Kuramochi boundary in \cite{kasue}. We
provide the corresponding proof for the Royden boundary, which
carries over almost line by line. We start with proving two
auxiliary lemmas.
\begin{lemma} \label{lemma:difference harmonic extension}
Let $(b,c)$ be a transient graph with $\partial_h X\not=\emptyset$. For all $\varphi\in\gamma_0(\BFEspace)$
$$H_{\varphi^2}-(H_\varphi)^2\in \Dzero\cap\linf.$$
\end{lemma}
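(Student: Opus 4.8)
The plan is to set $u := H_{\varphi^2} - (H_\varphi)^2$ and prove that $u \in \BFEspace$ and that $u$ vanishes on $\partial_h X$; by Corollary~\ref{corollary:d0 as kernel of gamma0} this immediately yields $u \in \Dzero \cap \linf$, which is exactly the claim. Thus the work splits into two independent checks: membership in the Royden algebra $\BFEspace$ and vanishing of the trace.

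First I would record the relevant properties of the two harmonic extensions. Since $\varphi \in \gamma_0(\BFEspace)$, I write $\varphi = \gamma_0 g$ for some $g \in \BFEspace$. Because $\BFEspace$ is an algebra (Proposition~\ref{proposition:algebraic and order properties of domains}) we have $g^2 \in \BFEspace$, and the multiplicativity of $\gamma_0$ gives $\varphi^2 = \gamma_0(g^2) \in \gamma_0(\BFEspace)$. Hence Proposition~\ref{Hgamma} applies to both $\varphi$ and $\varphi^2$: the functions $H_\varphi$ and $H_{\varphi^2}$ are the unique harmonic finite-energy solutions of the respective Dirichlet problems, so in particular $H_\varphi, H_{\varphi^2} \in \BFEspace$ with $\gamma_0 H_\varphi = \varphi$ and $\gamma_0 H_{\varphi^2} = \varphi^2$. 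Boundedness also follows directly from $|H_\psi(x)| \le \|\psi\|_\infty\,\mu_x(\partial_h X) \le \|\psi\|_\infty$. Since $H_\varphi \in \BFEspace$ and $\BFEspace$ is an algebra, $(H_\varphi)^2 \in \BFEspace$ as well, and therefore $u = H_{\varphi^2} - (H_\varphi)^2 \in \BFEspace$.

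It remains to show $\gamma_0 u = 0$ on $\partial_h X$, and this is the crucial point. Here I use that $\gamma_0$ is an algebra homomorphism, being the restriction to $\partial_h X$ of the Gelfand extension of $\BFEspace$ to continuous functions on $R$. Consequently $\gamma_0\big((H_\varphi)^2\big) = (\gamma_0 H_\varphi)^2 = \varphi^2$, and combined with $\gamma_0 H_{\varphi^2} = \varphi^2$ this gives $\gamma_0 u = \varphi^2 - \varphi^2 = 0$. Invoking Corollary~\ref{corollary:d0 as kernel of gamma0} then finishes the proof.

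I do not expect a serious obstacle; the only genuine content is verifying that all four functions $H_\varphi$, $H_{\varphi^2}$, $(H_\varphi)^2$ and their difference lie in $\BFEspace$, and that $H_{\varphi^2}$ genuinely has boundary trace $\varphi^2$. Both rely on the same two facts, namely that $\BFEspace$ is an algebra and that $\gamma_0$ respects products, which is precisely why $\varphi^2 = \gamma_0(g^2)$ is again a legitimate boundary datum. The content of the lemma is that although $(H_\varphi)^2$ need not be harmonic (so it differs from the genuine harmonic extension $H_{\varphi^2}$), the difference carries zero boundary values and therefore sits inside $\Dzero \cap \linf$.
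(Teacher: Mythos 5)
Your proposal is correct and follows essentially the same route as the paper's proof: write $\varphi=\gamma_0 g$, use that $\BFEspace$ is an algebra and that $\gamma_0$ is multiplicative to get $\varphi^2\in\gamma_0(\BFEspace)$, invoke Proposition~\ref{Hgamma} to identify the traces of $H_{\varphi^2}$ and $(H_\varphi)^2$, and conclude with Corollary~\ref{corollary:d0 as kernel of gamma0}. The only cosmetic difference is that you obtain boundedness from the bound $|H_\psi(x)|\leq\|\psi\|_\infty\,\mu_x(\partial_h X)$, whereas the paper gets it from the Royden decomposition via $H_{\varphi^2}=(g^2)_h$; both are valid.
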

\begin{proof}
Let $\varphi = \gamma_0 f$ with $f \in \ow{D} \cap \ell^\infty(X)$ be given. Since $\BFEspace$ is an algebra,
see Proposition~\ref{proposition:algebraic and order properties of domains},  we have $\varphi^2 = \gamma_0 f^2\in\gamma_0(\BFEspace)$ and according to Proposition~\ref{Hgamma},
this identity implies $H_{\varphi^2} = (f^2)_h\in \BFEspace$, where the boundedness follows from the Royden decomposition (Theorem~\ref{theorem:royden decomposition}).
The same arguments show $(H_\varphi)^2 = (f_h)^2 \in\BFEspace$.  An application of Proposition~\ref{Hgamma} yields
$$\gamma_0(H_{\varphi^2} - (H_\varphi)^2) = \varphi ^2 - (\gamma_0 H_\varphi )^2 = \varphi^2 - \varphi^2 = 0.$$
Thus, we obtain $H_{\varphi^2} - (H_\varphi)^2 \in \Dzero \cap \ell^\infty(X)$ from  Corollary~\ref{corollary:d0 as kernel of gamma0}.
\end{proof}

%

\begin{lemma}\label{HarmQuadrat}
Let $(b,c)$ a  graph over $X$. For all harmonic $h\in\BFEspace$ and all $x \in X$
$$-\LapUnw h^2(x)=\sum_{y\in X}b(x,y)(h(x)-h(y))^2+c(x)h(x)^2.$$
\end{lemma}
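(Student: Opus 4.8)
The plan is to compute $\LapUnw h^2(x)$ directly from the definition of the Laplacian and to reduce the identity to the harmonicity of $h$ via an elementary pointwise algebraic manipulation. Since everything is a statement at a fixed vertex $x$, no limiting or functional-analytic machinery is needed; the whole argument is a finite rearrangement of an absolutely convergent sum.

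First I would verify that the left-hand side is well defined, i.e. that $h^2 \in \LapDom$. As $h \in \BFEspace$ is bounded, the standing summability assumption gives $\sum_y b(x,y)\,|h(y)|^2 \leq \|h\|_\infty^2 \sum_y b(x,y) < \infty$ for every $x$, so $h^2 \in \LapDom$. This same bound guarantees that all sums occurring below converge absolutely, which is what licenses splitting and reindexing them. Writing out the definition then gives
$$\LapUnw h^2(x) = \sum_y b(x,y)\bigl(h(x)^2 - h(y)^2\bigr) + c(x) h(x)^2.$$
The key step is the identity
$$h(x)^2 - h(y)^2 = -\bigl(h(x)-h(y)\bigr)^2 + 2 h(x)\bigl(h(x)-h(y)\bigr),$$
valid for all real numbers. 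Substituting it and splitting the absolutely convergent sum, I obtain
$$\LapUnw h^2(x) = -\sum_y b(x,y)\bigl(h(x)-h(y)\bigr)^2 + 2h(x)\sum_y b(x,y)\bigl(h(x)-h(y)\bigr) + c(x)h(x)^2.$$

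Finally I would invoke harmonicity: $\LapUnw h(x)=0$ means exactly $\sum_y b(x,y)(h(x)-h(y)) = -c(x)h(x)$. Inserting this into the middle term turns it into $2h(x)\cdot(-c(x)h(x)) = -2c(x)h(x)^2$, which combines with the trailing $+c(x)h(x)^2$ to leave $-c(x)h(x)^2$; rearranging yields
$$-\LapUnw h^2(x) = \sum_y b(x,y)\bigl(h(x)-h(y)\bigr)^2 + c(x)h(x)^2,$$
as claimed. I do not expect a genuine obstacle here: the only point demanding any care is the absolute convergence that justifies separating the sums, and this is immediate from the boundedness of $h$ together with $\sum_y b(x,y) < \infty$. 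Everything else is the algebraic identity and a single substitution of the harmonic equation.
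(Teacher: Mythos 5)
Your proof is correct and follows essentially the same route as the paper's: a direct pointwise expansion of $\LapUnw h^2(x)$ via the algebraic identity linking $h(x)^2-h(y)^2$ to $(h(x)-h(y))^2$, justified by absolute convergence from the boundedness of $h$ and $\sum_y b(x,y)<\infty$, followed by substituting the harmonicity relation $\sum_y b(x,y)(h(x)-h(y))=-c(x)h(x)$. Your explicit check that $h^2\in\LapDom$ is a minor nicety the paper leaves implicit, but the argument is the same.
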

\begin{proof}
Let $h \in \ow{D}\cap \ell^\infty(X)$ harmonic and let $x \in X$. We compute
\begin{align*}
-\LapUnw &h^2(x) =\sum_{y\in X}b(x,y)(h(y)^2-h(x)^2)-c(x)h(x)^2\\
=&\sum_{y\in X}b(x,y)(h(x)-h(y))^2-c(x)h(x)^2 +2h(x)\sum_{y\in X}b(x,y)h(y)-2\sum_{y\in X}b(x,y)h (x)^2\\
=&\sum_{y\in X}b(x,y)(h(x)-h(y))^2-c(x)h^2(x) +2h(x)\sum_{y\in X}b(x,y)(h(y)-h(x)).
\end{align*}
Here, we used that $\sum_{y\in X}b(x,y)h(y)$ and $\sum_{y\in X}b(x,y)h(x)^2$ converge absolutely due to the boundedness of $h$ and since $\sum_{y\in X} b(x,y)<\infty$ for every $x\in X$.
Moreover, the  harmonicity of $h$ implies
$$\sum_{y\in X} b(x,y) (h(y)-h(x))=c(x)h(x).$$
Combining this observation with the previous computation finishes the proof.
\end{proof}

\begin{proof}[Proof of Theorem~\ref{gammaLtwo}]
Since $\mu$ is a harmonic measure, there exists an $x \in X$ such that $\mu = \mu_x$. By definition of $H_\varphi$ we obtain
$$\int_{\partial_h X} (\gamma_0 f)^2 \, d \mu = \int_{\partial_h X} (\gamma_0 f)^2 \, d \mu_x = H_{(\gamma_0 f)^2}(x) = (H_{(\gamma_0 f)^2}(x)  - (H_{\gamma_0 f}(x))^2) +  (H_{\gamma_0 f}(x))^2.$$
We start with the latter term.
According to Proposition~\ref{Hgamma}, the function $H_{\gamma_0 f} $ satisfies
$$H_{\gamma_0 f}(x) = f_h(x) = f(x) - f_0(x).$$
Since evaluating functions at a point is continuous on $(\ow{D},\|\cdot\|_o)$, see Proposition~\ref{lemma:properties of dtilde}, there exists a constant $C_1 \geq 0$ such that $|f(x)| \leq C_1 \|f\|_o$. Moreover, by the transience of $(b,c)$ and Theorem~\ref{CharTrans}, there exists a constant $C_2 \geq 0$ such that
$$|f_0(x)| \leq C_2 \ow{Q}(f_0)^{1/2} \leq  C_2 \ow{Q}(f )^{1/2} \leq C_2 \|f\|_o.$$
Hence, we infer, for a constant $C>0$, $$(H_{\gamma_0 f(x)})^2\leq C\|f\|_o^2.$$

To finish the proof it remains to consider the expression $H_{(\gamma_0 f)^2}(x)  - (H_{\gamma_0 f}(x))^2$. To this end, we employ Theorem~\ref{CharTrans} and let $g_x \in \Dzero\cap \ell^\infty(X)$ such that for all $g \in \Dzero$ we have
$$g(x) = \ow{Q}(g_x,g).$$
Since $H_{(\gamma_0 f)^2}  - (H_{\gamma_0 f})^2 \in \Dzero \cap \ell^\infty(X)$ by Lemma~\ref{lemma:difference harmonic extension}, we obtain
$$H_{(\gamma_0 f)^2}(x)  - (H_{\gamma_0 f}(x))^2 = \ow{Q}(g_x, H_{(\gamma_0 f)^2}  - (H_{\gamma_0 f})^2).$$
The harmonicity of $H_{(\gamma_0 f)^2}$, Lemma~\ref{HarmQuadrat} and the boundedness of $g_x$ yield
\begin{align*}
 \sum_{z \in X}   |g_x(z) \LapUnw (H_{(\gamma_0 f)^2}&-(H_{\gamma_0 f})^2) (z)| =  \sum_{z \in X}   |g_x(z)||\LapUnw(H_{\gamma_0 f})^2(z)| \\
 &= \sum_{z \in X} |g_x(z)| \left( \sum_{y\in X} b(z,y)(H_{\gamma_0 f}(z)-H_{\gamma_0 f}(y))^2+c(z)(H_{\gamma_0 f}(z))^2\right) \\
 &\leq \|g_x\|_\infty \ow{Q}(H_{\gamma_0 f})\\
 &=  \|g_x\|_\infty \ow{Q}(f_h)\\
 &\leq \|g_x\|_\infty \|f\|^2_o.
\end{align*}
This inequality shows that we can apply Green's formula, Proposition~\ref{Green}, to the functions $g_x$ and $H_{(\gamma_0 f)^2}  - (H_{\gamma_0 f})^2$. Combining it with the previous considerations yields
\begin{align*}
 |H_{(\gamma_0 f)^2}(x)  - (H_{\gamma_0 f}(x))^2| &= |\ow{Q}(g_x, H_{(\gamma_0 f)^2}  - (H_{\gamma_0 f})^2)| \\
 &= \left| \sum_{z \in X}   g_x(z) \LapUnw (H_{(\gamma_0 f)^2}-(H_{\gamma_0 f})^2) (z) \right|\\
 &\leq \|g_x\|_\infty \|f\|^2_o.
\end{align*}
This finishes the proof.
\end{proof}

\bibliography{Literature}{}

\newcommand{\etalchar}[1]{$^{#1}$}
\def\cprime{$'$}
\begin{thebibliography}{HKMW13}

\bibitem[AW03]{ArendtWarma}
Wolfgang Arendt and Mahamadi Warma.
\newblock Dirichlet and {N}eumann boundary conditions: {W}hat is in between?
\newblock {\em J. Evol. Equ.}, 3(1):119--135, 2003.
\newblock Dedicated to Philippe B\'enilan.

\bibitem[BCP68]{bony}
Jean-Michel Bony, Philippe Courr{\`e}ge, and Pierre Priouret.
\newblock {Semi-groupes de {F}eller sur une vari{\'e}t{\'e} {\`a} bord compacte
  et probl{\`e}mes aux limites int{\'e}gro-diff{\'e}rentiels du second ordre
  donnant lieu au principe du maximum}.
\newblock {\em Ann. Inst. Fourier (Grenoble)}, 18(fasc. 2):369--521 (1969),
  1968.

\bibitem[BGW09]{BGW}
B.~M. Brown, G.~Grubb, and I.~G. Wood.
\newblock {M-functions for closed extensions of adjoint pairs of operators with
  applications to elliptic boundary problems}.
\newblock {\em Math. Nachr.}, 282(3):314--347, 2009.

\bibitem[BMNW08]{BMN}
Malcolm Brown, Marco Marletta, Serguei Naboko, and Ian Wood.
\newblock {Boundary triplets and {$M$}-functions for non-selfadjoint operators,
  with applications to elliptic {PDE}s and block operator matrices}.
\newblock {\em J. Lond. Math. Soc. (2)}, 77(3):700--718, 2008.

\bibitem[CF12]{CF}
Zhen-Qing Chen and Masatoshi Fukushima.
\newblock {\em {Symmetric {M}arkov processes, time change, and boundary
  theory}}, volume~35 of {\em {London Mathematical Society Monographs Series}}.
\newblock Princeton University Press, Princeton, NJ, 2012.

\bibitem[CTHT11a]{CdVTHT2}
Yves {Colin de Verdi{\`e}re}, Nabila Torki-Hamza, and Fran\c{c}oise Truc.
\newblock {Essential self-adjointness for combinatorial {S}chr{\"o}dinger
  operators {II}---metrically non complete graphs}.
\newblock {\em Math. Phys. Anal. Geom.}, 14(1):21--38, 2011.

\bibitem[CTHT11b]{CdVTHT1}
Yves {Colin de Verdi{\`e}re}, Nabila Torki-Hamza, and Fran\c{c}oise Truc.
\newblock {Essential self-adjointness for combinatorial {S}chr{\"o}dinger
  operators {III}---{M}agnetic fields}.
\newblock {\em Ann. Fac. Sci. Toulouse Math. (6)}, 20(3):599--611, 2011.

\bibitem[Fel57]{feller}
William Feller.
\newblock {Generalized second order differential operators and their lateral
  conditions}.
\newblock {\em Illinois J. Math.}, 1:459--504, 1957.

\bibitem[FOT11]{FOT}
Masatoshi Fukushima, Yoichi Oshima, and Masayoshi Takeda.
\newblock {\em {Dirichlet forms and symmetric {M}arkov processes}}, volume~19
  of {\em {De Gruyter Studies in Mathematics}}.
\newblock Walter de Gruyter \& Co., Berlin, extended edition, 2011.

\bibitem[Fuk69]{Fuk}
Masatoshi Fukushima.
\newblock {On boundary conditions for multi-dimensional {B}rownian motions with
  symmetric resolvent densities}.
\newblock {\em J. Math. Soc. Japan}, 21:58--93, 1969.

\bibitem[GHK{\etalchar{+}}15]{canon}
Agelos Georgakopoulos, Sebastian Haeseler, Matthias Keller, Daniel Lenz, and
  Rados{\l}aw~K. Wojciechowski.
\newblock {Graphs of finite measure}.
\newblock {\em J. Math. Pures Appl. (9)}, 103(5):1093--1131, 2015.

\bibitem[GM11]{GM}
Fritz Gesztesy and Marius Mitrea.
\newblock {A description of all self-adjoint extensions of the Laplacian and
  Krein-type resolvent formulas on non-smooth domains}.
\newblock {\em J. Anal. Math.}, 113:53--172, 2011.

\bibitem[Gru08]{Gr}
G.~Grubb.
\newblock {Krein resolvent formulas for elliptic boundary problems in nonsmooth
  domains}.
\newblock {\em Rend. Semin. Mat. Univ. Politec. Torino}, 66(4):271--297, 2008.

\bibitem[HK11]{solu}
Sebastian Haeseler and Matthias Keller.
\newblock {Generalized solutions and spectrum for {D}irichlet forms on graphs}.
\newblock In {\em {Random walks, boundaries and spectra}}, volume~64 of {\em
  {Progr. Probab.}}, pages 181--199. Birkh{\"a}user/Springer Basel AG, Basel,
  2011.

\bibitem[HKLW12]{HKLW}
Sebastian Haeseler, Matthias Keller, Daniel Lenz, and Rados{\l}aw
  Wojciechowski.
\newblock {Laplacians on infinite graphs: {D}irichlet and {N}eumann boundary
  conditions}.
\newblock {\em J. Spectr. Theory}, 2(4):397--432, 2012.

\bibitem[HKMW13]{HKMW}
Xueping Huang, Matthias Keller, Jun Masamune, and Rados{\l}aw~K. Wojciechowski.
\newblock {A note on self-adjoint extensions of the {L}aplacian on weighted
  graphs}.
\newblock {\em J. Funct. Anal.}, 265(8):1556--1578, 2013.

\bibitem[JP11]{JP}
Palle E.~T. Jorgensen and Erin P.~J. Pearse.
\newblock {Resistance boundaries of infinite networks}.
\newblock In {\em {Random walks, boundaries and spectra}}, volume~64 of {\em
  {Progr. Probab.}}, pages 111--142. Birkh{\"a}user/Springer Basel AG, Basel,
  2011.

\bibitem[Kas10]{kasue}
Atsushi Kasue.
\newblock {Convergence of metric graphs and energy forms}.
\newblock {\em Rev. Mat. Iberoam.}, 26(2):367--448, 2010.

\bibitem[Kas17]{kasue2}
Atsushi Kasue.
\newblock Convergence of {D}irichlet forms induced on boundaries of transient
  networks.
\newblock {\em Potential Anal.}, 47(2):189--233, 2017.

\bibitem[KL12]{stoch}
Matthias Keller and Daniel Lenz.
\newblock {Dirichlet forms and stochastic completeness of graphs and
  subgraphs}.
\newblock {\em J. Reine Angew. Math.}, 666:189--223, 2012.

\bibitem[KLSW17]{uniform}
Matthias Keller, Daniel Lenz, Marcel Schmidt, and Rados\l~aw Wojciechowski.
\newblock Note on uniformly transient graphs.
\newblock {\em Rev. Mat. Iberoam.}, 33(3):831--860, 2017.

\bibitem[KLW16]{Ka-sing}
Shi-Lei Kong, Ka-Sing Lau, and Ting-Kam~Leonard Wong.
\newblock {Random walks and induced Dirichlet forms on self-similar sets}.
\newblock 2016.
\newblock {arXiv:1604.05440 }.

\bibitem[Mal10]{Mal}
M.~M. Malamud.
\newblock {Spectral theory of elliptic operators in exterior domains}.
\newblock {\em Russ. J. Math. Phys.}, 17(1):96--125, 2010.

\bibitem[Mil11]{Mil}
Ognjen Milatovic.
\newblock {Essential self-adjointness of magnetic {S}chr{\"o}dinger operators
  on locally finite graphs}.
\newblock {\em Integral Equations Operator Theory}, 71(1):13--27, 2011.

\bibitem[Pos08]{Pos}
Andrea Posilicano.
\newblock {Self-adjoint extensions of restrictions}.
\newblock {\em Oper. Matrices}, 2(4):483--506, 2008.

\bibitem[Pos14]{posi}
Andrea Posilicano.
\newblock {Markovian extensions of symmetric second order elliptic differential
  operators}.
\newblock {\em Math. Nachr.}, 287(16):1848--1885, 2014.

\bibitem[PR09]{PR}
Andrea Posilicano and Luca Raimondi.
\newblock {Krein's resolvent formula for self-adjoint extensions of symmetric
  second-order elliptic differential operators}.
\newblock {\em J. Phys. A}, 42(1):015204, 11, 2009.

\bibitem[Ryz07]{Ryz}
Vladimir Ryzhov.
\newblock {A general boundary value problem and its {W}eyl function}.
\newblock {\em Opuscula Math.}, 27(2):305--331, 2007.

\bibitem[Sch99a]{Schmu2}
Byron Schmuland.
\newblock {Extended {D}irichlet spaces}.
\newblock {\em C. R. Math. Acad. Sci. Soc. R. Can.}, 21(4):146--152, 1999.

\bibitem[Sch99b]{Schmu}
Byron Schmuland.
\newblock {Positivity preserving forms have the {F}atou property}.
\newblock {\em Potential Anal.}, 10(4):373--378, 1999.

\bibitem[Sch16]{Sch}
Marcel Schmidt.
\newblock {\em {Energy forms}}.
\newblock PhD thesis, Friedrich-Schiller-Universit{\"a}t Jena, 2016.

\bibitem[Sch17]{Sch17}
Marcel Schmidt.
\newblock Global properties of {D}irichlet forms on discrete spaces.
\newblock {\em Dissertationes Math. (Rozprawy Mat.)}, 522:43, 2017.

\bibitem[Sil74a]{Sil}
Martin~L. Silverstein.
\newblock Classification of stable symmetric {M}arkov chains.
\newblock {\em Indiana Univ. Math. J.}, 24:29--77, 1974.

\bibitem[Sil74b]{Si1}
Martin~L. Silverstein.
\newblock {\em {Symmetric {M}arkov processes}}.
\newblock {Lecture Notes in Mathematics, Vol. 426}. Springer-Verlag, Berlin-New
  York, 1974.

\bibitem[Soa94]{soardi}
Paolo~M. Soardi.
\newblock {\em {Potential theory on infinite networks}}, volume 1590 of {\em
  {Lecture Notes in Mathematics}}.
\newblock Springer-Verlag, Berlin, 1994.

\bibitem[Tai04]{taira}
Kazuaki Taira.
\newblock {\em {Semigroups, boundary value problems and {M}arkov processes}}.
\newblock {Springer Monographs in Mathematics}. Springer-Verlag, Berlin, 2004.

\bibitem[TH10]{TH}
Nabila Torki-Hamza.
\newblock {Laplaciens de graphes infinis ({I}-graphes) m{\'e}triquement
  complets}.
\newblock {\em Confluentes Math.}, 2(3):333--350, 2010.

\bibitem[Uen60]{ueno}
Tadashi Ueno.
\newblock {The diffusion satisfying {W}entzell's boundary condition and the
  {M}arkov process on the boundary. {I}, {II}}.
\newblock {\em Proc. Japan Acad.}, 36:533--538, 625--629, 1960.

\bibitem[Ven59]{wentzell}
A.~D. Ventcel{\cprime}.
\newblock {On boundary conditions for multi-dimensional diffusion processes}.
\newblock {\em Theor. Probability Appl.}, 4:164--177, 1959.

\bibitem[Woj08]{WojDiss}
RK~Wojciechowski.
\newblock {\em {Stochastic completeness of graphs}}.
\newblock PhD thesis, Thesis (Ph.D.)-City University of New York., 2008.

\end{thebibliography}
\bibliographystyle{alpha}
\end{document}